\documentclass[a4paper]{article}


\usepackage{amsmath,amssymb,graphicx} 
\usepackage{amsthm}
\usepackage[margin=1in]{geometry} 
\usepackage{enumerate}
\usepackage{tikz}
\usepackage{hyperref}
\usepackage[flushleft]{threeparttable}
\usepackage{array,booktabs,makecell}

\newtheorem{theorem}{Theorem}[section]
\newtheorem{definition}{Definition}[section]
\newtheorem{notation}{Notation}

\newtheorem{proposition}{Proposition}[section]
\newtheorem{remark}{Remark}[section]

\newtheorem{lemma}{Lemma}[section]
\newtheorem*{lemma*}{Lemma}
\newtheorem{corollary}{Corollary}[section]
\newtheorem{assumption}{Assumption}[section]

\usepackage{todonotes,caption,subcaption}


\begin{document}

\nocite{*} 

\title{Long time asymptotics of mixed-type Kimura diffusions}

\author{Guillaume Bal\thanks{Departments of Statistics and Mathematics and CCAM, University of Chicago, Chicago, IL 60637; {\tt guillaumebal@uchicago.edu}} \and Binglu Chen\thanks{Department of Mathematics, University of Chicago, Chicago, IL 60637; {\tt blchen@uchicago.edu}} \and Zhongjian Wang\thanks{Departments of Statistics and CCAM, University of Chicago, Chicago, IL 60637; {\tt zhongjian@uchicago.edu}}}

\date{} 

\maketitle
\tableofcontents
  
\begin{abstract}
    This paper concerns the long-time asymptotics of diffusions with degenerate coefficients at the domain's boundary. Degenerate diffusion operators with mixed linear and quadratic degeneracies find applications in the analysis of asymmetric transport at edges separating topological insulators. In one space dimension, we characterize all possible invariant measures for such a class of operators and in all cases show exponential convergence of the Green's kernel to such invariant measures. We generalize the results to a class of two-dimensional operators including those used in the analysis of topological insulators. Several numerical simulations illustrate our theoretical findings.
    \\
    \noindent\textbf{Key Words:} Degenerate Diffusion Operators; Invariant Measure; Long Time Asymptotics; Fredholm Index; Lyapunov Functions.\\
    \noindent\textbf{MSC}: 	
    35K65,  	
    60J70,  	
    47D06,      
    37C40.      
\end{abstract}

\section{Introduction}
	This paper analyzes the long time behavior of diffusion processes with infinitesimal generator given by a mixed type Kimura operator $L$ on one-dimensional and two-dimensional manifolds with corners. 
	
	In two-space dimensions, the mixed type Kimura operator $L$ acts on functions defined on a manifold with corner $P$. A boundary point $p\in bP$ has a relatively open neighborhood $V$ that is homomorphic to either $\mathbb{R}_+^2$ or $\mathbb{R}_+\times\mathbb{R}$, where we assume that $p$ maps to $(0,0)$. We say that $p$ is a corner in the first case and an edge point in the second case. More details are presented in Section \ref{sec:2d}. 
	
	In this paper, $L$ is a degenerate second-order differential operator such that for each edge, the coefficients of the normal part of the second-order term vanish to order one or two. For an edge point $p$, when written in an adapted system of local coordinates on $\mathbb{R}_+\times\mathbb{R}$, $L$ takes the form:
	\begin{gather}\label{form_edge}
		L=a(x,y)x^m\partial_{xx}+b(x,y)x^{m-1}\partial_{xy}+c(x,y)\partial_{yy}+d(x,y)x^{m-1}\partial_x+e(x,y)\partial_y,
	\end{gather}
	where we assume that $a,b,c,d,e$ are smooth functions and that $a(x,y)>0$ and $c(x,y)>0$. 
	Also $ m\in \{1,2\}$. When $m=1$, the edge $x=0$ is of Kimura type in that the coefficients vanish linearly towards it. When $m=2$, the edge $x=0$ is of quadratic type as the coefficients now vanish quadratically.
	
	For a corner $p$, as an intersection point of two edges, $L$ has the following normal form:{\small
	\begin{gather}\label{form_corner}
		L=a(x,y)x^m\partial_{xx}+b(x,y)x^{m-1}y^{n-1}\partial_{xy}+c(x,y)y^n\partial_{yy}+d(x,y)x^{m-1}\partial_x+e(x,y)y^{n-1}\partial_y,
	\end{gather}}
	when written in an adapted system of local coordinates on $\mathbb{R}_+^2$, where $a(x,y)>0$, $c(x,y)>0$,
	and $m,n\in \{1,2\}$.
	
	Associated to the operator $L$ is a $C^0$ semigroup $\mathcal{Q}_t=e^{tL}$ solution operator of the Cauchy problem
	\begin{align*}
		\partial_t u = Lu
	\end{align*}
	with initial conditions $u(x,0)=f(x)$ at $t=0$. The operator $e^{tL}$ and some of its properties are presented in detail in \cite{chen2022mixed}. The main objective of this paper is to analyze the long time behavior of $e^{tL}$, and in particular convergence to appropriately defined invariant measures.
	
	It turns out that the number of possible invariant measures and their type (absolutely continuous with respect to one-dimensional or two-dimensional Lebesgue measures or not) strongly depend on the structure of the coefficients $(a,b,c,d,e)$. We thus distinguish the different boundary types that influence the long time asymptotics of transition probabilities. 
	\begin{definition}
		A Kimura edge $E$ is called a tangent (Kimura) edge when $d(0,y)=0$ and a transverse (Kimura) edge when $d(0,y)>0$.
		
		A quadratic edge $E$ is called a tangent (quadratic) edge when $\frac{d(0,y)}{a(0,y)}<1$, a transverse (quadratic) edge when $\frac{d(0,y)}{a(0,y)}>1$, and a neutral (quadratic) edge when  $\frac{d(0,y)}{a(0,y)}=1$.
	\end{definition} 
	We assume that:
	\begin{assumption}
		Every edge is either tangent, transverse, or neutral.
	\end{assumption}
	Note that we do not consider the setting with $d(0,y)<0$ on a Kimura edge. In such a situation, diffusive particles pushed by the drift term $d(0,y)<0$ have a positive probability of escaping the domain $P$. We would then need to augment the diffusion operator with appropriate boundary conditions. 
	
	The long-time analysis in two-dimensions for a class of operators including a specific example of interest in the field of topological insulators \cite{topological} is given in section \ref{sec:2d}. In the application in \cite{topological}, the degenerate diffusion equation describes reflection coefficients of wavefields propagating in heterogeneous media, which model the separation between topological insulators; see \cite{bal2019continuous,bal2022asymmetric}.
	Before addressing two-dimensional operators, we consider the simpler one-dimensional setting, where we need to consider ten different scenarios, listed in table \ref{tab:1d} below, depending on the form of the generator $L$ at the domain's boundary. These cases are analyzed in detail in sections \ref{sec_semigroup}-\ref{sec:cv1d}. 
	
	To describe all invariant measure on the interval $[0,1]$ in one dimensional, consider
	\begin{equation}\label{operator1D}
		L=a(x)x^{m(0)}(1-x)^{m(1)}\frac{d^2}{dx^2}+b(x)x^{m(0)-1}(1-x)^{m(1)-1}\frac{d}{dx},
	\end{equation}
	with ${a}(x), {b}(x)\in C^\infty([0,1])$.
	
	For $i=0,1$,  we say that $x=i$ is of Kimura type when $m(i)=1$ and of quadratic type when $m(i)=2$. When $x=i$ is of Kimura type, we assume that the vector field $b(x)\frac{d}{dx}$ is inward pointing at $x=i$. For brevity, we use $\widetilde{a}$ and $\widetilde{b}$ to denote 
	\begin{align}\label{tilde_notation}
		\widetilde{a}(x)=a(x)x^{m(0)}(1-x)^{m(1)}, \quad \widetilde{b}(x)=b(x)x^{m(0)-1}(1-x)^{m(1)-1}.
	\end{align}
	\begin{definition}
		When $x=0\ (1\ \text{resp.}) $ is a Kimura endpoint, we say it is a tangent point if $b(0)=0\ (b(1)=0\ \text{resp.})$ and a transverse point if $b(0)>0\ (b(1)<0\ \text{resp.})$.
		
		When $x=0\ (1\ \text{resp.})$ is a quadratic endpoint, we say it is a tangent point if $\frac{{b}(0)}{{a}(0)}<1\ (\frac{{b}(1)}{{a}(1)}>-1\ \text{resp.})$, a transverse point if  $\frac{{b}(0)}{{a}(0)}>1\ (\frac{{b}(1)}{{a}(1)}<-1\ \text{resp.})$, and a neutral point if  $\frac{{b}(0)}{{a}(0)}=1\ (\frac{{b}(1)}{{a}(1)}=-1\ \text{resp.})$.
	\end{definition}

	The quadratic endpoint and the tangent Kimura endpoint are sticky boundary points in the sense that the Dirac measure supported on them is an invariant measure. When both endpoints are transverse, there is another invariant measure $\mu$ with full support on the whole interval. By computing the index of $L$ on an appropriate H\"older space, we characterize the kernel space of $\overline{L}^*$ composed of invariant measures for the diffusion $L$.
	
	In both cases, starting from a point in $P$, the corresponding transition probability of the diffusion converges to the invariant measure at an exponential rate. In cases with at least one tangent boundary point, we consider a functional space of functions that vanish at the tangent boundary points and show that $L$ has a spectral gap on such a space. In the absence of tangent boundary points, we prove that the invariant measure $\mu$ satisfies an appropriate Poincar\'e inequality so that $L$ also admits a spectral gap in $L^2(\mu)$. Our main convergence results for $\mathcal{Q}_t=e^{tL}$, whose properties are described in Theorem \ref{semigroup}, are summarized in Theorems \ref{wasser_tant} and \ref{q_t:l^2} below. 
	
	\medskip

	In two space dimensions, we do not consider all possible invariant measures as a function of the nature of the  drift terms $d$ and $e$ in the vicinity of edges or corners. Instead, we restrict ourselves to the following case:
	\begin{assumption}\label{assu:2d}
		For $L$ on a 2 dimensional compact manifold with corners $P$, there is exactly one tangent edge $H$, and when restricted to $H$, $L|_H$ is transverse to both boundary points.
	\end{assumption}
	This case involves exactly one tangent edge with two transverse boundary points so that, applying results from the one-dimensional case, we find that $L$ has a unique invariant measure $\mu$ fully supported on (the one-dimensional edge) $H$. Starting from any point $p$ not on the quadratic edge, we show in Theorem \ref{col:wasserstein} that the transition probability converges to $\mu$ at an exponential rate in the Wasserstein distance sense. The main tool used in the convergence is the construction of a Lyapunov function in Theorem \ref{thm:lyapunov}.
	
	The setting of $P$ a triangle with two transverse Kimura edges while the third edge is quadratic with transverse endpoints as described in Assumption \ref{assu:2d} finds applications in the analysis of the asymmetric transport observed at an edge separating topological insulators \cite{topological}; see Remark \ref{rem:topological} below. The corresponding one-dimensional version with one transverse Kimura point and one tangent quadratic point (see entry $(1,4)$ in Table \ref{tab:1d} below) also appears in the analysis of reflection coefficients for one-dimensional wave equations with random coefficients \cite{fouque2007wave}.
	
	
	\medskip
	
	There is a large literature on the analysis of the long-time behavior of $e^{tL}$ when $L$ is non-degenerate and when $L$ is of Kimura type. In the latter case,  $L$ is the generalized Kimura operator studied in \cite{epstein2013degenerate}. In that work, $L$ is analyzed on a weighted H\"older space, denoted by $L_\gamma: C^{0,2+\gamma}(P)\to C^{0,\gamma}(P)$. $L_\gamma$ is then Fredholm of index zero, which is used to characterize the nullspace of an adjoint operator and show that the non-zero spectrum lies in a half plane $\text{Re}\ \mu<\eta<0$ so that for $f\in C^{0,\gamma}(P)$, $e^{tL}f$ converges to a stable limiting solution at an exponential rate.
	
	
	However, in the presence of quadratic edge/point, $L_\gamma$ is not Fredholm. The reason is that near such quadratic edges or points, the operator may be modeled by an elliptic (non-degenerate) operator on an infinite domain (with thus continuous spectrum in the vicinity of the origin). We thus need another approach that builds on the following previous works.  In \cite{arendt1986one}, the growth bound of a strongly continuous positive semigroup is associated with the Lyapunov function. In \cite{bakry1985diffusions}, for a time continuous Markov process admitting a (unique) ergodic invariant measure, the rate of convergence to equilibrium is studied using a weaker version of a Lyapunov function called a $\phi$-Lyapunov function and an appropriate Poincar\'e inequality.

	
	\medskip
	An outline of the rest of this paper is as follows. The semigroup $e^{tL}$ is analyzed in section \ref{sec_semigroup} in the one-dimensional case. The space of invariant measures associated with a one-dimensional diffusion $L$, which depends on the structure of the drift term at the two boundary points, is constructed in section \ref{sec:im1d}; see Table \ref{tab:1d} for a summary. The exponential convergence of the kernel of $e^{tL}$ (the Green's function) to an appropriate invariant measure over long times is demonstrated in section \ref{sec:cv1d}. 
	
	The operator $e^{tL}$ in the two-dimensional setting is analyzed in \cite{chen2022mixed}. For the class of operators satisfying Assumption \ref{assu:2d}, the construction of the (unique normalized) invariant measure and the exponential convergence of the kernel of $e^{tL}$ to this measure in the Wasserstein sense are given in section \ref{sec:2d}. 
	Numerical simulations of stochastic differential equations presented in section \ref{sec:num} illustrate the theoretical convergence results obtained in dimensions one and two.

	\section{The $C^0$ Semigroup in one-space dimension}\label{sec_semigroup}
	Let $L$ be the one-dimensional mixed-type Kimura operator on $[0,1]$ given in \eqref{operator1D}.
	Let $\mathcal{Q}_t=e^{tL}$ be the solution operator of the Cauchy problem for the generator $L$ and denote by $q_t(x,y)$ its kernel. Its main properties are summarized in the following result:
	\begin{theorem}\label{semigroup}
		The operator $\mathcal{Q}_{t}$ defines a positivity preserving semigroup on $C^0([0,1])$. For $f\in C^0([0,1])$, the function $u(x,t)=\mathcal{Q}_tf(x)$ solves the Cauchy Problem for $L$ with initial condition $f(x)$ in the sense that 
		\begin{gather}\label{3}
			\underset{t \to 0+}{\lim}||\mathcal{Q}_tf-f||_{\mathcal{C}^{0}}=0.
		\end{gather}
	\end{theorem}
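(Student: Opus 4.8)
The plan is to exhibit $\mathcal{Q}_t$ as the Feller semigroup generated by the closure $\overline L$ of $L$ through the positive–maximum–principle form of the Hille–Yosida theorem (the Hille–Yosida–Ray generation theorem): if $L$ is densely defined on $C^0([0,1])$, satisfies the positive maximum principle, and $\operatorname{Range}(\lambda_0-L)$ is dense for some $\lambda_0>0$, then $\overline L$ generates a positivity preserving contraction semigroup that is automatically strongly continuous, which is exactly the assertion \eqref{3}. Two of the three hypotheses are immediate. The positive maximum principle holds because $L$ has no zeroth order term: at an interior point where $f$ attains a positive maximum one has $f'=0$, $f''\le 0$, $\widetilde a>0$, so $Lf\le 0$; at an endpoint the one‑sided derivative has the sign that, combined with the hypothesis that $b(x)\frac{d}{dx}$ is inward pointing at Kimura endpoints (and with $\widetilde b$ vanishing at quadratic endpoints), again gives $Lf\le 0$. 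Density of the domain is clear since the domain we produce contains $C^\infty$ interiorly together with the local model domains near the endpoints. Finally $L1=0$, so the semigroup is conservative.

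The substance is the construction of the resolvent $R_\lambda=(\lambda-L)^{-1}$ as a bounded operator on $C^0([0,1])$ for $\lambda>0$, with range in a more regular, boundary‑adapted space, together with density of its range. I would do this by localization with a partition of unity subordinate to a cover of $[0,1]$ by a middle interval, on which $L$ is uniformly elliptic and the resolvent is classical, and small neighborhoods of $x=0$ and $x=1$. Near a Kimura endpoint $L$ is modeled by $a_0 x\,\frac{d^2}{dx^2}+b_0\frac{d}{dx}$ with $a_0>0$, $b_0\ge 0$, and the resolvent together with its mapping properties on the appropriate $C^0$‑based weighted Hölder spaces is supplied by the Epstein–Mazzeo theory of generalized Kimura operators \cite{epstein2013degenerate} and by \cite{chen2022mixed}. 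Near a quadratic endpoint $L$ is modeled by the Euler operator $a_0 x^2\frac{d^2}{dx^2}+b_0 x\frac{d}{dx}$; the change of variable $x=e^{-s}$ converts this into the constant‑coefficient operator $a_0\partial_{ss}+(a_0-b_0)\partial_s$ on a half‑line $s\ge S_0$, whose resolvent acts boundedly on bounded functions for every $\lambda>0$ because the two roots of $a_0 r^2+(a_0-b_0)r-\lambda=0$ have product $-\lambda/a_0<0$ and hence straddle the origin — this is precisely where the degeneracy becomes behavior at infinity and $L$ ceases to be Fredholm at $\lambda=0$ (spectrum accumulating at $0$), while remaining invertible for $\lambda>0$. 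Patching the local right inverses with the cutoffs yields $\widetilde R_\lambda$ with $(\lambda-L)\widetilde R_\lambda=\mathrm{Id}+K_\lambda$, where $K_\lambda$ is built from commutators of $L$ with the cutoffs, hence is a lower‑order operator supported in the interior, compact on $C^0([0,1])$ by Arzelà–Ascoli and of operator norm $<1$ once $\lambda$ is large; then $R_\lambda=\widetilde R_\lambda(\mathrm{Id}+K_\lambda)^{-1}$ is a genuine resolvent for large $\lambda$, and the resolvent identity extends it to all $\lambda>0$.

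Granting $R_\lambda$, positivity is the maximum principle once more: if $g\ge 0$ then $u=R_\lambda g$ cannot attain a negative minimum, since there $Lu\ge 0$ would force $\lambda u=Lu+g\ge 0$; hence $R_\lambda\ge 0$. Applying $R_\lambda$ to $g\equiv 1$ and using $L1=0$ gives $\lambda R_\lambda 1=1$, so $\|\lambda R_\lambda\|_{C^0\to C^0}\le 1$ by positivity. The range of $\lambda-L$ is dense because it equals $\operatorname{dom}(\overline L)$, which contains the dense set furnished by the local construction. Hille–Yosida then produces a strongly continuous contraction semigroup $\mathcal{Q}_t=e^{t\overline L}$ on $C^0([0,1])$; the exponential formula $\mathcal{Q}_t f=\lim_{n\to\infty}\big(\tfrac{n}{t}R_{n/t}\big)^n f$ exhibits it as a limit of positive operators, so $\mathcal{Q}_t$ is positivity preserving, and the strong continuity of $\mathcal{Q}_t$ is exactly \eqref{3}. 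Uniqueness of this semigroup — so that it coincides with the $e^{tL}$ of \cite{chen2022mixed} — follows because density of $\operatorname{Range}(\lambda-L)$ forces $\overline L$ to be the full generator.

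The step I expect to be the genuine obstacle is the quadratic‑endpoint analysis: establishing the resolvent and its mapping and norm properties on the correct function space there, with bounds uniform enough to close the patching argument, given that the change of variables pushes the boundary to infinity and one must track the non‑compact spectral contribution it creates; and then checking that the patched error $K_\lambda$ is indeed compact with small norm. The interior and Kimura‑endpoint pieces, by contrast, are covered by classical theory and by \cite{epstein2013degenerate,chen2022mixed}, and the passage from the resolvent to the semigroup, including \eqref{3}, is a formal consequence of Hille–Yosida.
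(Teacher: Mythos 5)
Your argument is essentially correct, but it takes a genuinely different route from the paper's. The paper works in the time domain: the two unmixed cases are dispatched by citation (the Kimura--Kimura case via \cite{Epstein2010WrightFisherDI}, the quadratic--quadratic case via classical uniformly parabolic theory after $x=e^z$), and in the mixed case it patches the two local model solution operators $\widetilde{Q}^0_t,\widetilde{Q}^1_t$ with cutoffs $\chi,\psi_0,\psi_1$ chosen so that the Duhamel error terms are commutators $[\psi_i,L]$ supported at positive distance from $\operatorname{supp}\chi$, $\operatorname{supp}(1-\chi)$; this gives operator norms $O(e^{-c/t})$ as $t\to0^+$, the inverse $(\mathrm{Id}-E_t)^{-1}$ is a Neumann series for small $T$, and strong continuity \eqref{3} plus an explicit formula for the kernel $q_t(x,y)$ in terms of the local kernels come out of the construction. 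You instead work on the resolvent side: the same three-chart localization (interior elliptic piece, Kimura endpoint via \cite{epstein2013degenerate}, quadratic endpoint via the logarithmic change of variables turning the Euler model into a constant-coefficient operator on a half-line, whose resolvent is bounded for $\lambda>0$ because the characteristic roots straddle the origin), patched into an approximate resolvent with error $K_\lambda$, and then the Hille--Yosida--Ray theorem with the positive maximum principle to generate a positive, conservative, strongly continuous contraction semigroup, which yields positivity and \eqref{3} by soft semigroup theory and handles all boundary-type combinations uniformly. What your route buys is exactly that softness; what it gives up is the kernel representation the paper records at the end of its proof, and it needs the extra identification of your Feller semigroup with the $\mathcal{Q}_t=e^{tL}$ of \cite{chen2022mixed}, which your uniqueness remark addresses. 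Two points to tighten rather than gaps: the sentence asserting that the range of $\lambda-L$ ``equals $\operatorname{dom}(\overline L)$'' is garbled --- what you need, and what your construction provides, is that $R_\lambda$ is a right inverse defined on all of $C^0([0,1])$, so $\lambda-L$ is surjective from the constructed domain and density is automatic; and the smallness of $K_\lambda$, which you rightly flag as the crux, follows by the same mechanism as the paper's $O(e^{-c/t})$ bound, since $\operatorname{supp}[L,\psi_i]$ lies at positive distance from $\operatorname{supp}\chi_i$ and in that region the local problems are uniformly elliptic, so the resolvent contribution there decays like $e^{-c\sqrt{\lambda}}$ (or at worst gains a factor $\lambda^{-1/2}$ through interior gradient estimates), giving $\|K_\lambda\|<1$ for $\lambda$ large.
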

	\begin{proof}
		If both endpoints are of Kimura type, $L$ is the 1D Kimura operator. By \cite[Sec 9, Theorem 1]{Epstein2010WrightFisherDI}, $\mathcal{Q}_t$ defines a positive and strongly continuous semi-group on $C^m([0,1])$ for $m\in\mathbb{N}$. If both endpoints are of quadratic type, we can regard $L$ as a uniform parabolic operator on $\mathbb{R}$ (using, e.g., a change of variables $x=e^z$ in the vicinity of $x=0$ as we do below), which is well studied (see, e.g., \cite{Fri}).  
		
		For the remaining case, we might as well assume that $x=0$ is a Kimura endpoint and $x=1$ a quadratic endpoint. We intend to build the global solution out of local solution near the boundary. Let $([0,1-\eta], \phi_0), ([\eta,1], \phi_1)$ for some $0<\eta<\frac{1}{4}$ small be the coordinate charts so that pulling back $L$ to these coordinate charts gives two local operators
		\begin{gather*}
			L_0=x\partial^2_x+b_0\partial_x+xc(x)\partial_x, \quad x\in [0,\phi_0(1-\eta)),\\ L_1=\partial_z^2+d(z)\partial_z, \quad z\in (\phi_1(\eta), \infty).
		\end{gather*}
		We extend these two local operators to the whole sample space
		\begin{gather*}
			\widetilde{L}_0=x\partial^2_x+b_0\partial_x+xc(x)\varphi_0(x)\partial_x,\quad \widetilde{L}_1=\partial_z^2+d(z)\varphi_1(z)\partial_z
		\end{gather*}
		where $\varphi_0(x)$ is a smooth cutoff function so that
		\begin{gather*}
			\varphi_0(x)=\begin{cases}
				\begin{array}{cc}
					1 & \text{for}\  x\in [0,\phi_0(1-2\eta)]\\
					0 & \text{for}\  x>\phi_0(1-\eta),
			\end{array}\end{cases}
			\varphi_1(z)=\begin{cases}
				\begin{array}{cc}
					1 & \text{for}\  z\in [\phi_1(2\eta), \infty)\\
					0 & \text{for}\  z<\phi_1(\eta).
			\end{array}\end{cases}
		\end{gather*}
		Let $\widetilde{Q}^0_t, \widetilde{Q}^1_t$ be the solution operators of $\widetilde{L}_0, \widetilde{L}_1$ and denote their kernels by $\widetilde{q}^0_t, \widetilde{q}^1_t$ respectively. Define smooth cutoff functions $0\leq\chi, \psi_0, \psi_1\leq 1$ so that 
		\begin{gather}\label{choice}
			\text{supp}\psi_0\subset [0,1-2\eta],\ \text{supp}\psi_1\subset [2\eta,1],\  \psi_0|_{\text{supp}\chi}\equiv 1,\  \psi_1|_{\text{supp}(1-\chi)}\equiv 1.
		\end{gather}
		
		Given $f\in C^0([0, 1])$ and $g\in C^0([0,1]\times [0,T])$, set the homogeneous and inhomogeneous solution operator as
		\begin{gather*}
			\widetilde{Q}_tf=\psi_0\widetilde{Q}^0_t[\chi f]+\psi_1\widetilde{Q}^1_t[(1-\chi)f],\\
			A_tg=\int_0^t\widetilde{Q}_{t-s}g(s)ds.
		\end{gather*}
		Then 
		\begin{gather*}
			(\partial_t-L)\widetilde{Q}_tf=E^0_tf:=[\psi_0,L]\widetilde{Q}^0_t[\chi f]+[\psi_1,L]\widetilde{Q}^1_t[(1-\chi)f],\\
			(\partial_t-L)A_tg=(Id-E_t)g:=g-[\psi_0,L]A^0_t[\chi g]-[\psi_1,L]A^1_t[(1-\chi)g].
		\end{gather*}
		
		Our choice of $\chi, \psi_0, \psi_1$ \eqref{choice} ensures that $\text{dist}(\text{supp}[\psi_0, L], \text{supp}\chi)>0$, $ \text{dist}(\text{supp}[\psi_1, L], \text{supp}(1-\chi))>0$, which ensures that $E^0_t, E_t$ are bounded operators with operator norms bounded by $O(e^{-\frac{c}{t}})$ for some constant $c>0$ as $t\to 0^+$. Hence for $T>0$ small enough, there exists an inverse $(Id-E_t)^{-1}$, which can be expressed as a convergent Neumann series in the operator norm topology of $C^0([0,1]\times [0,T])$. Finally we can express the solution operator by 
		\begin{gather*}
			\mathcal{Q}_tf=\widetilde{Q}_tf-A_t(Id-E_t)^{-1}E^0_tf.
		\end{gather*}
		
		Since both $\widetilde{Q}^0_t, \widetilde{Q}^1_t$ are strongly continuous, then so is $\widetilde{Q}_t$: \[\underset{t\to 0}{\lim}\  ||\widetilde{Q}_tf-f||_{C^0}=0.\] 
		As $(Id-E_t)^{-1}E^0_t$ is a bounded map from $C^0([0,1]$ to $C^0([0,1]\times [0,T])$, we have
		\[||A_t(Id-E_t)^{-1}E^0_t||_{C^0([0,1])\to (C^0([0,1]), t)}=o(t).\] 
		Therefore \eqref{3} holds. Let
		$\widetilde{q}_t, h_t$ be the heat kernels of $\widetilde{Q}_t, (Id-E_t)^{-1}E^0_t$. We express the heat kernel of $\mathcal{Q}_t$ as
		\begin{gather*}
			q_t(x,y)=\widetilde{q}_t(x,y)-\int_0^t\int_0^1\widetilde{q}_{t-s}(x,z)h_s(z,y)dzds.
		\end{gather*}
	\end{proof}



	\section{Invariant Measures in dimension one}\label{sec:im1d}
	In this section, we aim to find all invariant measures of $L$ in spatial dimension one. For convenience of computation, in this section we first choose a global coordinate $\phi$ so that $(W_{0},\phi),(W_{1},\phi)$ is a cover of $[0,1/3], [2/3,1]$ under which $L$ takes the following normal form:
	\begin{enumerate}
		\item In $([0,1/3],\phi)$, $L_{0}$ has two possible forms:
		\begin{gather*}
			L_{0}=x\partial_{x}^{2}+b(x)\partial_{x},\ \text{if 0 is a Kimura endpoint}\\ L_{0}=\partial_{z}^{2}+b(z)\partial_{z},\ \text{if 0 is a quadratic endpoint}
		\end{gather*}
		
		\item In $([2/3,1],\phi)$, $L_{1}$ has two possible forms:
		\begin{gather*}
			L_{1}=(1-x)\partial_{x}^{2}+b(x)\partial_{x}, \ \text{if 1 is a Kimura endpoint}\\ L_{1}=\partial_{z}^{2}+b(z)\partial_{z}, \ \text{if 1 is a quadratic endpoint}
		\end{gather*}
	\end{enumerate}
	where we use $b$ to denote the first-order term in all cases.
	\begin{notation}
		We call the global coordinate $\phi$ on $[0,1]$ {\em heat coordinates} if $L_0, L_1$ have forms above under $\phi$.
	\end{notation}
	Let
	\begin{gather}\label{b(z)}
		b_{\pm}=\underset{x\to 1,0}\lim b(x) \qquad \text{ or } \qquad  b_{\pm}=\underset{z\to\pm\infty}\lim b(z).
	\end{gather}
	A straightforward derivation shows that, if in the original coordinate, $L_{0}=x^2\partial_{x}^{2}+(b_{-}+1)x\partial_{x}$, then after turning to heat coordinates $x=e^z$, $L_0$ takes the form $\partial_{z}^{2}+b_-\partial_{z}$.
		\subsection{Functional settings and index associated to $L$}\label{holder_space}
		Our functional setting involves local H\"older spaces, which differ from the usual H\"older space (with $|\cdot|_{k+\gamma}$ to denote its norm) near the boundaries and are variations of those used in \cite{epstein2013degenerate}. In the following definition, we assume $c$ is a point away from the interval's boundaries.  
		\begin{enumerate}
			\item 
			For quadratic type boundaries, with $U=(-\infty,c]$,
			\begin{enumerate}
				\item  $f\in C^0(U)$ belongs to $\mathcal{D}^{\gamma}(U)$ if the function $f$ can be continuously extend to $z=-\infty$ and the following norm is finite:
				\begin{gather}
					{||f||_{\gamma,U}= |f|_{\gamma,U}}+\underset{z_1\leq z_2\leq c}{\sup}\big|\int_{z_1}^{z_2}fdz\big|, \text{when it is not neutral};\\
					{||f||_{\gamma,U}= |f|_{\gamma,U}}+\underset{z_1\leq z_2\leq c}{\sup}|\int_{z_1}^{z_2}fdz|+\underset{z_1\leq z_2\leq c}{\sup}|\int_{z_1}^{z_2}\int_{-\infty}^{z}fdxdz|\\, \text{when it is neutral}\nonumber;
				\end{gather}
				
				\item 
				$f\in C^1(U)$ belongs to $C^{1+\gamma}(U)$ if the functions
				$\partial_zf,f$ can be continuously extend to $z=-\infty$ and local $C^{1+\gamma}$ norm is finite:
				\begin{gather}
					||f||_{1+\gamma,U}= |f|_{1+\gamma,U}+||\partial_z f||_{\gamma,U};
				\end{gather}
				
				\item $f\in C^2(U)$ belongs to $C^{2+\gamma}(U)$ if the functions
				$\partial_z^2f, \partial_{z}f,f$ extend continuously to $z=-\infty$ and the local $C^{2+\gamma}$ norm is finite:
				\begin{gather}
					{||f||_{2+\gamma,U}= |f|_{2+\gamma,U}}+||\partial_z^2f||_{\gamma,U}.
				\end{gather}
			\end{enumerate}
			
			\item For Kimura type boundaries, with $U=(0,c]$,
			\begin{enumerate}
				\item $f\in C^0(U)$ belongs to $\mathcal{D}^\gamma(U)$ if the function $xf$ can be continuously extend to $x=0$, and the local $C^\gamma$ norm is finite: \begin{gather}
					{||f||_{\gamma,U}}= |xf|_{\gamma,U};
				\end{gather}
				
				\item $f\in C^0(\overline{U})\cap C^1(U)$ belongs to $C^{1+\gamma}(U)$ if the function $x\partial_xf$ can be continuously extend to $x=0$ and vanish, and the local $\mathcal{C}^{1+\gamma}$ norm is finite:
				\begin{align}
					{||f||_{1+\gamma,U}}=  |f|_{\gamma,U}+|x\partial_{x}f|_{\gamma,U};
				\end{align}
				
				\item  $f\in C^1(\overline{U})\cap C^2(U)$ belongs to $C^{2+\gamma}(U)$ if the function $x\partial_x^2$ can be continuously extend to $x=0$ and vanish, and the local $\mathcal{C}^{2+\gamma}$ norm is finite:
				\begin{align}
					||f||_{2+\gamma,U}=  |f|_{\gamma,U}+|\partial_{x}f|_{\gamma,U}+|x\partial_{x}^{2}f|_{\gamma,U}.
				\end{align}
				
			\end{enumerate}
		\end{enumerate}
		\begin{remark}\label{int:neutral}
			At neutral quadratic endpoint, interchanging the two integral signs, we have
			\begin{gather*}
				\int_{z_1}^{z_2}\int_{-\infty}^zf(s)s
				dsdz=\int_{-\infty}^{z_2}(z_2-s)f(s)ds,
			\end{gather*}
			so $\int_{-\infty}^{z_2}sf(s)ds<\infty$.
		\end{remark}
		We now build global norms on spaces of functions on $[0,1]$ out of the above local norms.
		\begin{definition}\label{def_holder} 
			Let $W_{2}\subset\subset(0,1)$ covering $[0,1]\setminus(W_{0}\cup W_{1})$ and $\varphi_{0},\varphi_{1},\varphi_{2}$ be a partition of unity subordinate to this cover. A function $f\in C^{2+\gamma}([0,1])$ if $(\varphi_{i}f)\circ\phi\in C^{2+\gamma}(W_i)$ for each $i$ and the global norm is 
			\begin{align*}
				||f||_{2+\gamma}=\underset{i}{\sum}||(\varphi_{i}f)\circ\phi||_{2+\gamma,W_{i}}.
			\end{align*}
			Motivated by \eqref{transition} below, we define $f\in C^{1+\gamma}([0,1])$ if $\phi'_{i}\cdot(\varphi_{i}f)\circ\phi\in C^{1+\gamma}(W_{i})$ for each $i$ and the global norm is
			\begin{align*}
				||f||_{1+\gamma}=\underset{i}{\sum}||\phi'_{i}(\varphi_{i}f)\circ\phi||_{1+\gamma,W_{i}}.
			\end{align*}
			Finally $f\in\mathcal{D}^{\gamma}([0,1])$ if $(\phi'_{i})^{2}\cdot(\varphi_{i}f)\circ\phi\in C^\gamma(W_{i})$ for each $i$ and the global norm is 
			\begin{align*}
				||f||_{\gamma}=\underset{i}{\sum}||(\phi'_{i})^{2}(\varphi_{i}f)\circ\phi||_{\gamma,W_{i}}.
			\end{align*}
		\end{definition} 
		Different choices of coverings give rise to equivalent norms. Endowed with these norms, we now show that the domains and target spaces of $M$ are all Banach spaces.
		
		\begin{proposition}
			For $0<\gamma<1$, all the spaces defined in Definition \ref{def_holder} are all Banach spaces.
		\end{proposition}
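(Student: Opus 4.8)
\emph{Proof strategy.}
The plan is to treat the three global spaces of Definition \ref{def_holder} in a uniform way. Each of them is obtained from the finitely many local spaces $\mathcal{D}^{\gamma}(W_i)$, $C^{1+\gamma}(W_i)$, $C^{2+\gamma}(W_i)$ via an injective linear localization map $f\mapsto\big(w_i\,(\varphi_i f)\circ\phi\big)_i$, where $w_i$ is $1$, $\phi_i'$, or $(\phi_i')^2$ according to the space, and the global norm is the sum of the local norms of the components. Consequently the norm axioms at the global level reduce to those at the local level, together with $\sum_i\varphi_i\equiv1$ and the nonvanishing of each $\phi_i'$ (which yields definiteness: $\|f\|=0$ forces $\varphi_i f\equiv0$ for all $i$, hence $f\equiv0$), provided one first knows that each local quantity $\|\cdot\|_{\bullet,W_i}$ is a genuine norm — which it is, since for the Kimura spaces $|xf|_{\gamma,U}$ and $|f|_{\gamma,U}$ control $\sup|xf|$ and $\sup_U|f|$, and for the quadratic spaces $|f|_{k+\gamma,U}$ already contains $\sup_U|f|$. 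So the two real points are local completeness and its transfer to the global level.

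For local completeness I would realize each local space as an isometric copy of a closed subspace of a finite product (with the sum norm) of classical Banach spaces. At a Kimura boundary the maps $g\mapsto xg$, $g\mapsto(g,x\partial_x g)$, $g\mapsto(g,\partial_x g,x\partial_x^2 g)$ embed $\mathcal{D}^{\gamma}(U)$, $C^{1+\gamma}(U)$, $C^{2+\gamma}(U)$ isometrically into $C^\gamma([0,c])$ and powers of it; at a quadratic boundary the target is built from $C^{k+\gamma}\big((-\infty,c]\big)$ (functions whose derivatives up to order $k$ extend continuously to $z=-\infty$) together with copies of $C_b(\Delta)$, $\Delta=\{(z_1,z_2):z_1\le z_2\le c\}$, recording the integral term $g\mapsto\big((z_1,z_2)\mapsto\int_{z_1}^{z_2}g\,dz\big)$ and, when the endpoint is neutral, the iterated integral (which by Remark \ref{int:neutral} may be rewritten as a single weighted integral). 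All of these classical spaces are complete, so completeness of the local space reduces to closedness of the image, i.e.\ to the assertion that the relations defining membership survive passage to the limit: the differential relations do because $C^{k+\gamma}$-convergence entails uniform convergence of derivatives on compacta of the interior, where the derivative of the limit is the limit of the derivatives; the boundary-vanishing conditions ($x\partial_x f,\ x\partial_x^2 f\to0$ at $x=0$) do because point evaluation is $C^\gamma$-continuous; and the integral identities do because if $g_n\to g$ in $C^\gamma$ and $(z_1,z_2)\mapsto\int_{z_1}^{z_2}g_n$ converges uniformly on $\Delta$, then for each finite $z_1$ uniform convergence of $g_n$ on $[z_1,c]$ forces the uniform limit to equal $\int_{z_1}^{z_2}g$. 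I expect the bookkeeping for the quadratic $\mathcal{D}^{\gamma}$-norm — whose integral term extends out to $z_1=-\infty$, so that the $C^\gamma$-bound alone does not control it — together with its neutral variant, to be the only genuinely technical step.

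For the transfer, let $(f_n)$ be Cauchy in one of the global norms. Then each component $w_i\,(\varphi_i f_n)\circ\phi$ is Cauchy in $\|\cdot\|_{\bullet,W_i}$ and, by the previous step, converges there to some $g_i$. Since every local norm dominates uniform convergence on compact subsets of $(0,1)$, the $f_n$ converge uniformly on such compacta to a function $f$; by uniqueness of limits, $w_i\,(\varphi_i f)\circ\phi$ agrees with $g_i$ on the interior of $W_i$, and since $g_i$ is a continuous element of the local space and the interior is dense in $W_i$, the two coincide on all of $W_i$. Hence $f$ lies in the global space and $\|f_n-f\|=\sum_i\|w_i\,(\varphi_i f_n)\circ\phi-g_i\|_{\bullet,W_i}\to0$. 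Finally, the equivalence (already noted in the text) of the norms attached to different covers and partitions of unity makes the particular choices immaterial.
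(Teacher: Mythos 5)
Your proposal is correct and follows essentially the same route as the paper: reduce to the finitely many local spaces via the partition-of-unity localization map, invoke the classical/Kimura case, and check that the defining relations of the quadratic local spaces (continuous extension to $z=-\infty$ and finiteness of the integral supremum, plus the iterated integral in the neutral case) survive passage to the limit of a Cauchy sequence. The differences are only in packaging — you phrase local completeness as closedness of an isometric embedding into products of classical Banach spaces and spell out the transfer back to the global space, which the paper leaves implicit.
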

		\begin{proof}
			Take $C^{2+\gamma}([0,1])$ as an example. Let \begin{align*}i: C^{2+\gamma}([0,1])\to C^{2+\gamma}(W_{0})\times C^{2+\gamma}(W_{1})\times C^{2+\gamma}(W_{2})\end{align*}be the inclusion by mapping $f$ to $((\varphi_{0}f)\circ\phi,(\varphi_{1}f)\circ\phi,(\varphi_{2}f)\circ\phi)$. This inclusion is closed since $\varphi_{0},\varphi_{1},\varphi_{2}$ is a partition of unity. Thus we need to show that each local $(0,\gamma)$ space defined at the beginning of this subsection is Banach.
			
			The cases when $U=(0,c]$ are verified in \cite{epstein2013degenerate}.  When $U=(-\infty,c]$, we first show that $\mathcal{D}^\gamma(U)$ is a Banach space. As usual we take a Cauchy sequence $\{u_i\}_{i\geq 1}$ in $\mathcal{D}^\gamma(U)$. The standard argument for the usual H\"older space (see \cite[Remark 3.1.3]{krylov1996lectures}) applies to show that there exists a limit $u\in C^\gamma(U)$. To show that $u$ is in $\mathcal{D}^\gamma(U)$, we only need to check that $u$ can continuously extend to $-\infty$ and $\underset{z_1\leq z_2\leq c}{\sup}|\int_{z_1}^{z_2}udz|$ exists. For the first part, $\{u_{i}(-\infty)\}_{i\geq 1}$ is convergent and the limit coincides with the limit of $u$ at $-\infty$. For the second part, for any fixed $z_1\leq z_2\leq c$, $|\int_{z_1}^{z_2}u_{i}dz|$ converges to $|\int_{z_1}^{z_2}udz|$. And since $|\int_{z_1}^{z_2}u_idz|$ are uniformly bounded for $i,z_1,z_2$, $\underset{z_1\leq z_2\leq c}{\sup}|\int_{z_1}^{z_2}udz|$ exists. Therefore we proved that $u\in\mathcal{D}^\gamma(U)$, which implies that $\mathcal{D}^\gamma(U)$ is a Banach space. 
			
			For $C^{1+\gamma}(U), C^{2+\gamma}(U)$, the above proof applies to show that there exists a limit $u\in C^{1+\gamma}(U), C^{2+\gamma}(U)$ respectively, and $\partial_zf,\partial_{zz}f\in\mathcal{D}^\gamma(U)$, so that these two spaces are also Banach spaces.
		\end{proof}
		
		\begin{gather}
			L_\gamma: C^{2+\gamma}([0,1])\longrightarrow\alpha\cdot\mathcal{D}^\gamma([0,1]),
		\end{gather}
		where $\alpha(x)=x^{m(0)}(1-x)^{m(1)}$ with $m(0)=1$ if $x=0$ is Kimura and $m(0)=0$ otherwise, while $m(1)$ is defined similarly.  We first state the main theorem of this section and leave the proof to Section 3.3.
		\begin{theorem}\label{thm 3.1}
			$L_\gamma$ is a Fredholm operator and its index is 
			\begin{gather*}
				\text{ind}\ (L_\gamma)= \kappa^++\kappa^-
			\end{gather*}
			where $\kappa^+,\ \kappa^-$ are the number of positive $b_+$ and negative $b_-$, respectively,  where $b_\pm$ is defined in \eqref{b(z)}.
		\end{theorem}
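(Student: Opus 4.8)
The plan is to compute the Fredholm index of $L_\gamma$ by localizing at the two endpoints, exploiting the additivity of the index under perturbations that are compact or relatively bounded with small bound. Concretely, I would write $L_\gamma$ as a "model operator" plus a compact remainder. Away from the boundary the operator is uniformly elliptic and its contribution to the index is zero; in a neighborhood $W_i$ of each endpoint $x=i$ the operator is modeled by one of the normal forms listed at the start of Section \ref{sec:im1d} (either $x\partial_x^2+b(x)\partial_x$ in Kimura coordinates on $(0,c]$, or $\partial_z^2+b(z)\partial_z$ in heat coordinates on $(-\infty,c]$, with $b(z)\to b_\pm$). Using the partition of unity $\varphi_0,\varphi_1,\varphi_2$ from Definition \ref{def_holder}, one builds a parametrix by gluing the parametrices of the model operators on the $W_i$, and the commutator terms $[\varphi_i,L]$ are supported in the overlap regions where the operator is nondegenerate, hence contribute compact operators between the relevant Hölder spaces. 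So $\mathrm{ind}(L_\gamma)$ equals the sum of the local indices of the model operators at $x=0$ and $x=1$ (each taken with the appropriate boundary behavior built into the function spaces of Section \ref{holder_space}).

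Next I would compute each local index. For a Kimura endpoint, the relevant model is $L_0=x\partial_x^2+b(x)\partial_x$ acting $C^{2+\gamma}\to x\cdot\mathcal{D}^\gamma$ (the factor $\alpha(x)$ carries the $x^{m(0)}$). This is exactly the one-dimensional model operator analyzed in \cite{epstein2013degenerate}, whose index on these weighted Hölder spaces is known; the outcome is that a Kimura endpoint contributes $0$ to the index when $b(0)\ge 0$ (tangent or transverse inward-pointing), which is our standing assumption. For a quadratic endpoint we use $\partial_z^2+b(z)\partial_z$ on $U=(-\infty,c]$ with $b(z)\to b_-$ (at $x=0$) or $b(z)\to b_+$ (at $x=1$). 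Here the key computation is an ODE indicial analysis: the homogeneous equation $f''+b_\pm f'=0$ has solutions $1$ and $e^{-b_\pm z}$, and one checks which of these lies in $C^{2+\gamma}(U)$ (i.e. extends continuously to $\mp\infty$ together with derivatives) and, dually, how many solvability conditions the integral-type seminorms in the definition of $\mathcal{D}^\gamma(U)$ impose on the target. The bookkeeping should show that a quadratic endpoint with the drift pointing *into* the domain ($b_-<0$ at $x=0$, or $b_+>0$ at $x=1$, i.e. the transverse case) adds $+1$ to the index, while a tangent or neutral quadratic endpoint adds $0$. Summing over the two endpoints gives precisely $\kappa^++\kappa^-$ (the count of positive $b_+$ plus negative $b_-$), and the same count shows the operator is Fredholm since in every case the model has closed range with finite-dimensional kernel and cokernel.

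To make the perturbation argument rigorous I would proceed in two reductions. First, freeze coefficients: replace $a(x),b(x)$ by their boundary values near each endpoint; the difference vanishes at the boundary and, measured in the weighted norms of Definition \ref{def_holder}, is a relatively compact perturbation of the frozen operator (this is where one invokes that multiplication by a function vanishing at the boundary is compact from $C^{2+\gamma}$ into the target space, via an Arzelà–Ascoli/interpolation argument on the local spaces). Second, for the quadratic model, conjugate by the change of variables $x=e^z$ recorded in the remark after \eqref{b(z)} so the operator becomes literally $\partial_z^2+b_\pm\partial_z$ on a half-line, where the Fourier/Laplace analysis of the indicial roots is transparent and the mapping properties on $\mathcal{D}^\gamma(U)$, $C^{1+\gamma}(U)$, $C^{2+\gamma}(U)$ can be read off directly (the integral seminorms are exactly tuned so that $\partial_z^2$ is surjective up to the finitely many indicial obstructions).

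The main obstacle I anticipate is the index computation at the quadratic endpoint, specifically verifying that the carefully-designed norms in Section \ref{holder_space} — including the extra double-integral seminorm in the neutral case and the moment condition in Remark \ref{int:neutral} — produce the claimed local index and, in particular, give a genuinely Fredholm (closed-range) operator rather than one with a small-but-nonempty essential spectrum near $0$. One must check that the homogeneous model $\partial_z^2+b_\pm\partial_z$ on $(-\infty,c]$, paired with the Dirichlet-type behavior at the finite end $c$ coming from the interior gluing, is invertible modulo a finite-dimensional space *in these spaces* — essentially an explicit solvability statement for a constant-coefficient second-order ODE on a half-line with weighted-integrability constraints on the data and solution. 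This is elementary in principle but the matching of dimensions (kernel vs. cokernel) across the tangent/transverse/neutral trichotomy is the delicate accounting that the whole theorem rests on. Once that is pinned down, the additivity over endpoints and the compactness of the lower-order and gluing errors are routine, and the formula $\mathrm{ind}(L_\gamma)=\kappa^++\kappa^-$ follows.
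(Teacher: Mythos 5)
Your overall strategy (reduce to constant-coefficient model operators near the two endpoints by a compact perturbation, then do explicit ODE analysis in the weighted spaces) is close in spirit to what the paper does, except the paper first rewrites $L_\gamma u=f$ as a first-order system $M$ (Lemma~\ref{lmm:3.2}), proves $M-\widetilde{M}$ is compact (Proposition~\ref{prop:index}), and then computes kernel and cokernel of $\widetilde{M}$ \emph{globally} by solving the system with the fundamental matrix $Y$, the projectors $E_\pm$ and the transfer matrix $\Lambda$. However, your proposal has a genuine gap exactly where the theorem lives: the local index at a quadratic endpoint is left as ``the bookkeeping should show,'' and the one concrete statement you make about it is inconsistent with the definitions. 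You claim that $b_-<0$ at $x=0$ (resp.\ $b_+>0$ at $x=1$) is ``the transverse case'' with ``drift pointing into the domain.'' By the paper's definitions, transverse quadratic at $x=0$ means $b(0)/a(0)>1$, which in heat coordinates is $b_->0$; the case $b_-<0$ has drift toward $z=-\infty$, i.e.\ toward the boundary, and is the \emph{tangent} case. So your verbal rule ``transverse adds $+1$, tangent or neutral adds $0$'' would yield $\mathrm{ind}(L_\gamma)=|\text{transverse quadratic endpoints}|$, contradicting the theorem, whose content is precisely $\mathrm{ind}(L_\gamma)=|\text{tangent quadratic endpoints}|=\kappa^++\kappa^-$. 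The sign-based sentence you append agrees with the statement of the theorem only because it restates it; the inversion shows the decisive solvability/dimension count in the spaces $\mathcal{D}^\gamma$, $C^{2+\gamma}$ of Section~\ref{holder_space} (including the double-integral seminorm in the neutral case) has not actually been carried out.

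The second gap is the asserted additivity ``index $=$ sum of local indices.'' A local index on $(-\infty,c]$ or $(0,c]$ is not defined until you prescribe how the two ends are coupled through the interior, and a gluing formula of this type is not automatic in these nonstandard spaces. The paper does not use such a formula: after deforming to $\widetilde{M}$ it derives the necessary relations $E_-u(1)=I_1f$, $E_+\Lambda u(1)=-I_2f$, in which the two endpoints interact through the invertible matrix $\Lambda$, and then identifies the cokernel via the map $\Phi$ \emph{together with} the extra condition that $s(s-3)f_1$ vanish at Kimura tangent endpoints (Lemma~\ref{phi}); that correction, which your sketch does not anticipate, is exactly what makes Kimura tangent ends contribute $0$ net. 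Finally, the compactness of the coefficient-freezing error on the noncompact heat chart is not a bare Arzel\`a--Ascoli statement: it uses the decay $b(z)-b_\pm\to 0$ at infinity and a diagonal-subsequence argument as in Proposition~\ref{prop:index}, and an analogous care would be needed for your second-order version. Until the half-line model computation and the gluing step are actually done (and done with the correct tangent/transverse identification), the proposal is an outline of a plausible route rather than a proof.
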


		\subsection{Null Space of $\overline{L}^*$}
		Let $\overline{L}$ denote the $C^0([0,1])$-graph closure of $L$ with domain $C^{2+\gamma}([0,1])$. 
		Having obtained the index of $L_\gamma$, we are now able to find the null space of the adjoint operator $\overline{L}^*$. We first use the maximum principle below to find the kernel space of $L_\gamma$.
		
		\begin{lemma}[Maximum Principle]\label{max_prin}
			Suppose that $w\in C^{2+\gamma}([0,1])$ is a subsolution of $L$, $Lw\geq 0$ in a neighborhood, $U$ of a transverse boundary point $p$. If $w$ attains a local maximum at $p$, then $w$ is a constant on $U$. 
		\end{lemma}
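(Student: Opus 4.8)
The plan is to adapt the classical Hopf-type boundary-point argument to the degenerate setting, exploiting the fact that at a transverse boundary point the first-order term dominates the (degenerate) second-order term in a way that makes the point behave like an interior point for the purpose of the strong maximum principle. I would work in the heat coordinates introduced above, so that near a transverse Kimura point $p=\{x=0\}$ the operator reads $L_0 = x\partial_x^2 + b(x)\partial_x$ with $b(0)>0$, and near a transverse quadratic point it reads $L_0 = \partial_z^2 + b(z)\partial_z$ with, after the change of variables, the relevant sign condition on $b_\pm$ ensuring the drift points inward. In both cases the key structural fact is that the drift is strictly inward-pointing at $p$, so a particle started at $p$ immediately enters the interior.

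First I would reduce to a one-sided neighborhood $U=[0,\delta)$ (Kimura case) or $U=(-\infty,c]$ (quadratic case), set $M=w(p)=\max_U w$, and assume for contradiction that $w$ is non-constant on $U$, so there is a point $q\in U$ with $w(q)<M$. The heart of the argument is the construction of a suitable comparison (barrier) function $v$ with $Lv\ge 0$ (or $>0$) on an annular region, $v\le 0$ on the inner part of the boundary where $w<M$, and $v>0$ in a punctured neighborhood of $p$, together with a strict sign on the relevant one-sided derivative of $v$ at $p$. In the Kimura case $x=0$, the natural barrier is of the form $v(x)=e^{-\lambda x}-1$ or a power $x^\beta$ tuned so that $L_0 v = x\lambda^2 e^{-\lambda x} - b(x)\lambda e^{-\lambda x}$ has a favorable sign for small $x$: since $b(0)>0$, the term $-b(x)\lambda e^{-\lambda x}$ dominates and one must instead build the barrier so that $Lv>0$ works in our favor, i.e. choose $v$ increasing away from the boundary; the precise form is dictated by requiring the indicial/characteristic exponent to have the right sign, which is exactly where the transverse condition $b(0)>0$ enters. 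In the quadratic case the operator is uniformly elliptic in $z$ and the classical Hopf barrier $e^{-\lambda z}$ on a half-line works directly, with $\lambda$ chosen using boundedness of $b$.

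Then the argument concludes in the standard way: by the weak maximum principle (or comparison principle) applied to $w + \varepsilon v$ on the annular region for small $\varepsilon>0$, one deduces that $w+\varepsilon v \le M$ there, hence $w(x)\le M-\varepsilon v(x)$ near $p$; since $v$ is strictly positive and "leaves the boundary" at a definite rate, this forces the appropriate one-sided derivative of $w$ at $p$ to be strictly negative, contradicting that $p$ is a local maximum of a function in $C^{2+\gamma}$ (recall that membership in $C^{2+\gamma}$ near a Kimura point forces $x\partial_x f\to 0$, and near a quadratic point $\partial_z f$ extends continuously, so the relevant derivative at $p$ is well-defined and must vanish at an interior-type maximum). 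Therefore $w$ must be constant on $U$.

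**I expect the main obstacle to be the barrier construction at a transverse Kimura point**, because the coefficient of $\partial_x^2$ degenerates exactly at $p$ and one cannot use the textbook exponential barrier verbatim; one must track the indicial roots of $L_0$ at $x=0$ (the equation $x r(r-1) + b(0) r = 0$, giving $r=0$ and $r=1-b(0)$) and choose the barrier's leading exponent to straddle these in the way that produces the strict one-sided derivative. The delicate point is ensuring the barrier simultaneously satisfies $L v\ge 0$ uniformly on a full one-sided neighborhood (not just to leading order), which requires absorbing the lower-order and variable-coefficient corrections by shrinking $\delta$; once the right exponent is identified this is routine, and the quadratic-endpoint case is genuinely classical since $L_0$ is non-degenerate there.
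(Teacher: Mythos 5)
Your plan runs into two concrete problems. The first is the contradiction you invoke at the end. The point $p$ is a one-sided boundary point, so a local maximum there forces only a sign, $\partial_x w(p)\le 0$ in the inward direction, and this is the \emph{same} sign as the Hopf-type estimate $w(x)\le w(p)-\varepsilon v(x)$ delivers; membership in $C^{2+\gamma}$ does not make the derivative vanish (e.g.\ $w(x)=-x$ lies in the space, has a maximum at a Kimura endpoint, and $\partial_x w(0)=-1$). So as written there is no contradiction. At a transverse Kimura point the missing ingredient is the subsolution inequality \emph{at} $p$ combined with transversality: since $x\partial_x^2 w\to 0$ for $w\in C^{2+\gamma}$ and $b(0)>0$, $Lw(0)\ge 0$ forces $\partial_x w(0)\ge 0$, and only then does the strict barrier bound (here the barrier can simply be $v(x)=x$, since $Lx=b(x)>0$ near $0$; no indicial-root tuning is needed) give a contradiction. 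With that supplement your Kimura argument is sound, though note the paper does not reprove this case at all: it cites the generalized Kimura result of Epstein--Mazzeo and only argues the quadratic case.

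The second, more serious, problem is that the quadratic case is not ``genuinely classical.'' In heat coordinates the point $p$ sits at $z=-\infty$, so there is no finite boundary point at which to run Hopf: $\partial_z w(-\infty)=0$ automatically (boundedness plus continuity of $\partial_z w$ up to $-\infty$), any admissible barrier vanishing at the boundary point also has derivative tending to $0$ there, and the barrier $e^{-\lambda z}$ you propose blows up as $z\to-\infty$, so the comparison $w+\varepsilon v\le M$ can neither be closed at $p$ nor yield information at $p$ if you retreat to finite endpoints. Equivalently, in the original coordinate both coefficients vanish to second/first order at $p$, $Lw(p)=0$ identically, and functions of the class may have infinite $x$-derivative at $p$ (e.g.\ $x^{1/2}$), so no derivative contradiction is available. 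That some sign condition must be used in an essential way is shown by the tangent case: for $L=\partial_z^2+b\partial_z$ with $b<0$, $w=-e^{\beta z}$, $0<\beta<-b$, is a nonconstant strict subsolution with a maximum at $-\infty$. The paper's proof is instead a value (not derivative) contradiction obtained by integrating from the boundary at infinity: normalize $w(-\infty)=0$, integrate $\partial_z^2 w+b\,\partial_z w\ge 0$ from $-\infty$ using $\partial_z w(-\infty)=0$ to get $\partial_z w\ge -bw\ge 0$ (this is where $b\ge 0$ and $w\le 0$ enter), so $w$ is nondecreasing; if $w$ is not constant this produces a strictly positive value of $w$, contradicting the maximum. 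You would need an argument of this monotonicity/integration type for the quadratic endpoint; the Hopf barrier scheme cannot reach it.
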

		\begin{proof}
			The case when $L$ is of generalized Kimura type is studied in \cite{epstein2013degenerate}, so we only need to consider quadratic boundaries. Let $x$ denote normalized local coordinates of $0$ so that 
			\begin{gather*}
				L=x^2\partial_x^2+(b+1)x\partial_{x}, b\geq0
			\end{gather*}
			
			By subtracting $v(-\infty)$ from $v$, we may assume that $v(-\infty)=0$. Integrating $(\partial_{z}^{2}+b\partial_{z})v$ we see that $\partial_{z}v+bv\geq 0$ in a neighborhood $U$ of $-\infty$. Thus $\partial_{z}v\geq -bv\geq 0$ in $U$. As $w$ is not a constant, we can expand this neighbourhood until some $z_0$ such that $\partial_z v>0$. Then $v(z_{0})=\int_{-\infty}^{z_{0}}\partial_{z}vdz> 0$, which contradicts the fact that $-\infty$ is a local maximum.
		\end{proof}
		
		
		\begin{theorem}\label{maximum_prin}
			$\text{dim ker}\ L_\gamma=2$ if and only if $L_\gamma$ is tangent to both endpoints; otherwise $\text{dim ker}\ L_\gamma=1$. 
		\end{theorem}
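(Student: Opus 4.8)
The plan is to reduce the computation of $\ker L_\gamma$ to deciding whether a single explicit solution of the ODE $Lu=0$ belongs to $C^{2+\gamma}([0,1])$, and to settle that question with the maximum principle at non-tangent endpoints and with a direct local estimate at tangent ones. Since $L$ carries no zeroth-order term, the constant function lies in $\ker L_\gamma$, so $\dim\ker L_\gamma\ge 1$. On the open interval the operator reads $L=\widetilde a\,\frac{d^2}{dx^2}+\widetilde b\,\frac{d}{dx}$ with smooth coefficients and $\widetilde a>0$ (see \eqref{tilde_notation}), so the solution space of $Lu=0$ on $(0,1)$ is spanned by $1$ and
\begin{gather*}
u_2(x)=\int_{1/2}^{x}\exp\!\Big(-\!\int_{1/2}^{s}\frac{\widetilde b(t)}{\widetilde a(t)}\,dt\Big)\,ds,
\end{gather*}
where $u_2'>0$, so $u_2$ is strictly increasing on $(0,1)$. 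Any $u\in\ker L_\gamma$ agrees with some $c_1+c_2u_2$ on $(0,1)$; since $1\in C^{2+\gamma}([0,1])$ this forces $c_2=0$ (so $u$ is constant) or $u_2\in C^{2+\gamma}([0,1])$. Hence $\dim\ker L_\gamma=2$ when $u_2\in C^{2+\gamma}([0,1])$ and $\dim\ker L_\gamma=1$ otherwise, and it remains to show that $u_2\in C^{2+\gamma}([0,1])$ precisely when both endpoints are tangent.

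Suppose first that one endpoint, say $x=0$, is non-tangent, i.e. transverse Kimura, transverse quadratic, or neutral quadratic, and assume for contradiction that $u_2\in C^{2+\gamma}([0,1])$. Then $u_2$ extends continuously to $[0,1]$ as a non-constant increasing function, so it attains its minimum over $[0,1]$ at $x=0$; equivalently $w=-u_2$ attains a local maximum at $x=0$ and satisfies $Lw=0\ge 0$ there. Lemma \ref{max_prin} then forces $w$, and hence $u_2$, to be constant near $x=0$, contradicting $u_2'>0$. Here the proof of Lemma \ref{max_prin} at a quadratic endpoint uses only the sign condition $b_-\ge 0$ (in the notation of \eqref{b(z)}), so it applies to neutral as well as to transverse quadratic endpoints, while the transverse Kimura case is the one recorded in \cite{epstein2013degenerate}; the endpoint $x=1$ is handled symmetrically. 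Therefore a non-tangent endpoint forces $u_2\notin C^{2+\gamma}([0,1])$ and $\dim\ker L_\gamma=1$.

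It remains to treat the case in which both endpoints are tangent and show $u_2\in C^{2+\gamma}([0,1])$; by Definition \ref{def_holder} it suffices to check that the localization of $u_2$ near each endpoint lies in the corresponding local space, the interior piece being smooth. Near a tangent Kimura endpoint, in heat coordinates $L_0=x\partial_x^2+b(x)\partial_x$ with $b(0)=0$, the ratio $b(x)/x=\widetilde b/\widetilde a$ extends smoothly across $x=0$, so $u_2'$ and $u_2$ are smooth up to $x=0$ and lie in the local Kimura $C^{2+\gamma}$ space. Near a tangent quadratic endpoint, in heat coordinates $L_0=\partial_z^2+b(z)\partial_z$ on $(-\infty,c]$ with $b_-<0$, the substitution $x=e^z$ together with smoothness of the original coefficients gives $b(z)-b_-=O(e^z)$ as $z\to-\infty$, whence
\begin{gather*}
u_2'(z)=\exp\!\Big(-\!\int_c^z b(s)\,ds\Big)=K\,e^{-b_-z}\bigl(1+o(1)\bigr),\qquad z\to-\infty,
\end{gather*}
which decays exponentially since $-b_->0$. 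Consequently $u_2$ extends continuously to $z=-\infty$, while $\partial_z u_2=u_2'$ and $\partial_z^2 u_2=-b(z)u_2'(z)$ tend to $0$ there and have finite $\mathcal D^\gamma$ norm over $(-\infty,c]$; as the endpoint is not neutral, the additional double-integral term of Definition \ref{def_holder} is absent, and $u_2$ lies in the local quadratic $C^{2+\gamma}$ space. Combining the two endpoints gives $u_2\in C^{2+\gamma}([0,1])$, hence $\dim\ker L_\gamma=2$.

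I expect the main obstacle to be the tangent quadratic estimate: extracting the exponential approach rate $b(z)-b_-=O(e^z)$ from the smoothness of the original operator under $x=e^z$, and then matching $u_2$ and its first two derivatives against each term of the rather intricate local $C^{2+\gamma}$ norm of Definition \ref{def_holder}, including confirming that the non-neutral branch is the relevant one. A minor additional point, used in the second paragraph, is to record explicitly that Lemma \ref{max_prin}, although stated for transverse endpoints, covers neutral quadratic endpoints as well.
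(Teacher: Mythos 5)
Your proposal is correct, and its skeleton is the same as the paper's: both reduce the question to whether the scale function (your $u_2$, the paper's $S$ in \eqref{scale_func}) belongs to $C^{2+\gamma}([0,1])$, since $\ker L_\gamma\subset\mathrm{span}\{1,S\}$. Where you diverge is in the non-tangent direction. The paper settles every endpoint type by direct asymptotics of $S$, $S'$, $S''$: writing $\widetilde b/\widetilde a=c(x)/(x(1-x))$, it notes the integrand behaves like $\eta^{-c_0}$, $(1-\eta)^{c_1}$, so that at a transverse Kimura endpoint $xS''\sim x^{-c_0}\notin\mathcal D^\gamma$, while at a transverse or neutral quadratic endpoint $S$ fails already at the level of boundedness/integrability; at tangent endpoints it checks, as you do, that $S''$ is smooth (Kimura, $c_0=0$) or $S(z)\sim e^{(1-c_0)z}$ in heat coordinates (quadratic, $c_0<1$). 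You instead dispatch all non-tangent endpoints at once with the boundary maximum principle, Lemma \ref{max_prin}, applied to $w=-u_2$, which avoids the case analysis in the paper's computation (e.g.\ it does not need to separate $0<c_0<1$ from $c_0\ge1$ at a transverse Kimura point, a distinction the paper's write-up passes over quickly); the price is that Lemma \ref{max_prin} is stated only for transverse points, and you must, as you correctly flag, observe that its proof (which only uses $b\ge 0$ in the normal form $x^2\partial_x^2+(b+1)x\partial_x$, together with the citation to \cite{epstein2013degenerate} for the Kimura case) also covers neutral quadratic endpoints. Conversely, the paper's computational route is self-contained and yields the explicit boundary rates of $S$, which you only recover in the tangent case; your verification there (the rate $b(z)-b_-=O(e^z)$, the resulting $e^{-b_-z}$ decay of $u_2'$, and the term-by-term check against the local quadratic $C^{2+\gamma}$ norm, noting the non-neutral branch of $\mathcal D^\gamma$ is the relevant one) is in fact more detailed than the paper's one-line statement and is correct. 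The only blemishes are cosmetic: the local norms you invoke are defined in the enumerated list opening Section 3.1 rather than in Definition \ref{def_holder} itself, and your appeal to the extended maximum principle in the neutral case could alternatively be replaced by the even shorter remark that there $u_2$ grows like $z$ in heat coordinates and so cannot extend continuously to the endpoint.
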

		\begin{proof}
			The kernel of $L_{\gamma}$ is in the linear space of $\{1,S(x)\}$, where $S(x)$ is the scale function of the process. For 
			\begin{equation}\label{unknownL}
				L=\widetilde{a}(x)\partial_{xx}+\widetilde{b}(x)\partial_{x},
			\end{equation}
			the scale function is defined as
			\begin{gather}\label{scale_func}
				S(x)=C\int_{}^{x}\text{exp}\left[-\int_{\frac{1}{2}}^{\eta}\frac{\widetilde{b}(\xi)}{\widetilde{a}(\xi)}d\xi\right]d\eta
			\end{gather}
			and has derivatives 
			\begin{gather*}
				S'(x)=\text{exp}\left[-\int_{}^{x}\frac{\widetilde{b}(\xi)}{\widetilde{a}(\xi)}d\xi\right], \qquad S''(x)=-S'(x)\frac{\widetilde{b}(x)}{\widetilde{a}(x)}.
			\end{gather*}
			Denote $\frac{\widetilde{b}(x)}{\widetilde{a}(x)}=\frac{c(x)}{x(1-x)}$ and $c_0=c(0)$,  $c_1=c(1)$.  The integrand $\text{exp}\left[-\int_{\frac{1}{2}}^{\eta}\frac{\widetilde{b}(\xi)}{\widetilde{a}(\xi)}d\xi\right]\sim \eta^{-c_0}, (1-\eta)^{c_1}$ as $\eta$ approaches $0+,1-$ respectively, so $S$ is integrable when $c_0<1, c_1>-1$.
			
			For Kimura point $x=0$, if $0<c_0<1$, then $xS''(x)\sim x^{-c_0}$ as $x\to 0$, which is not finite and thus not in $\mathcal{D}^\gamma$. If $c_0=0$, $S''$ is smooth at $x=0$. For a quadratic point $x=0$, if $c_0<1$, turning into heat coordinates $z=lnx$, $S(z)\sim e^{(1-c_0)z}$, it is in local $C^{2+\gamma}$ space. We conclude that $S$ is in $C^{2+\gamma}([0,1])$ if and only if $L$ is tangent to both endpoints. 
		\end{proof}
		
		\begin{definition}
			We define
			\begin{gather*}
				bP_{\text{ter}}(L)=\{\text{quadratic endpoint},P\}
			\end{gather*}
			if $L$ is transverse to both endpoints, otherwise we define
			\begin{gather*}
				bP_{\text{ter}}(L)=\{\text{quadratic endpoint, Kimura tangent endpoint}\}.
			\end{gather*}
		\end{definition}
		Here we use the notation $bP_{\text{ter}}$ from \cite{epstein2013degenerate} to denote terminal boundary.
		
		We first explain that to each element of $bP_{\text{ter}}([0,1])$ there is an element of the nullspace of $\overline{L}^{*}$. For any quadratic endpoint or Kimura tangent endpoint $p$, $p$ is a sticky boundary point so $\delta(p)$ is in $\text{ker}\ \overline{L}^{*}$. For $w\in C^{2+\gamma}([0,1])$, we have 
		\begin{align*}
			\langle L_\gamma w,\delta(p)\rangle=0,
		\end{align*}
		that is to say $\delta(p)\in \text{Ker}\  L_\gamma^*$.
		This equality still holds for $w\in \text{Dom}(\overline{L})$, where $\overline{L}$ is the $\mathcal{C}^0$ graph closure of $L_\gamma$. Hence $\delta(p)\in \text{Dom}(\overline{L}^*)$, and
		$\overline{L}^*\delta(p)=0$.
		If $L$ is transverse to both endpoints, we can explicitly construct $\mu$, which is supported on the whole interval, as follows:  
		\paragraph{Construction of $\mu$}\label{consruction of mu} We first reduce $L$ to the standard form
		\begin{gather*}
			L_z=\frac{1}{2}\partial_{zz}-\nabla U(z)\partial_z=-\frac{1}{2}\partial_z^*\partial_z
		\end{gather*}
		with $\partial^*_z=\partial_z+2\nabla U(z)$ on a probability space $(X,\mu)$. It is known that $L_z$ has an invariant measure $\mu$ 
		\begin{gather*}
			\mu(dz)=\frac{e^{-2U(z)}}{Z}dz
		\end{gather*}
		where $Z$ is a normalizing constant. We now check that $\mu$ is a probability measure in the different boundary cases.
		\begin{enumerate}
			\item \textbf{Two transverse quadratic boundary.}
			For $L=x^2(1-x)^2\partial_{xx}+x(1-x)b(x)\partial_x$, we first do a coordinate change: 
			\begin{gather*}
				z=\frac{1}{\sqrt{2}}\text{ln}\frac{x}{1-x},\ x=\frac{e^{\sqrt{2}z}}{1+e^{\sqrt{2}z}},
			\end{gather*}
			so that 
			\begin{gather*}
				L_z=\frac{1}{2}\partial_{zz}-\nabla U(z)\partial_z,\  z\in (-\infty,\infty)
			\end{gather*}
			with $\nabla U=-\frac{1}{\sqrt{2}}\left(b(x)+2x-1\right)$.  Then 
			\begin{gather*}
				\mathit{e^{-2U(z)}=
					\begin{cases}
						\begin{array}{cc}
							\Theta(e^{\sqrt{2}(b(0)-1)z}),\ z\to -\infty\\
							\Theta(e^{\sqrt{2}(b(1)+1)z}),\ z\to +\infty
						\end{array}
				\end{cases}}.
			\end{gather*}
			Thus, $Z<\infty$ exactly when $b(0)>1, b(1)<-1$, i.e. both quadratic endpoints are transverse.
			\item \textbf{One transverse Kimura and one quadratic boundary.} For $L=x^2(1-x)\partial_{xx}+xb(x)\partial_x$, we do a coordinate change by letting 
			\begin{gather*}\frac{\partial z}{\partial x}=\frac{1}{\sqrt{2}x\sqrt{1-x}}
			\end{gather*}
			so that 
			\begin{gather*}
				L_z=\frac{1}{2}\partial_{zz}-\nabla U(z)\partial_z,\  z\in (-\infty,0]
			\end{gather*}
			with
			$\nabla U=-\frac{1}{\sqrt{2}}(\frac{3x-2+2b(x)}{2\sqrt{1-x}})$. Then \begin{gather*}
				\mathit{e^{-2U(z)}=
					\begin{cases}
						\begin{array}{cc}
							\Theta(e^{\sqrt{2}(b(0)-1)z}),\ z\to -\infty\\
							\Theta(z^{-1-2b(1)}),\ z\to 0
						\end{array}
				\end{cases}}.
			\end{gather*}
			Thus, $Z<\infty$ exactly when $b(0)>1, b(1)<0$, i.e. both endpoints are transverse.
			\item \textbf{Two transverse Kimura boundary.}
			For $L=x(1-x)\partial_{xx}+b(x)\partial_x$, we do a coordinate change 
			\begin{align*}\frac{\partial z}{\partial x}=\frac{1}{\sqrt{2x(1-x)}}\end{align*}
			so that 
			\begin{gather*}
				L_z=\frac{1}{2}\partial_{zz}-\nabla U(z)\partial_z,\  z\in [0,\frac{\pi}{\sqrt{2}}]
			\end{gather*}
			with $\nabla U=-\frac{x-1/2+b(x)}{\sqrt{2x(1-x)}}$. Then 
			\begin{gather}\label{inv_kq}
				\mathit{e^{-2U(z)}=
					\begin{cases}
						\begin{array}{cc}
							\Theta(z^{2b(0)-1}),\ z\to 0\\
							\Theta(z^{-1-2b(1)}),\ z\to\frac{\pi}{\sqrt{2}}
						\end{array}
				\end{cases}}.
			\end{gather}
			Again, $Z<\infty$ exactly when $b(0)>0, b(1)<0$, i.e. both endpoints are transverse.
		\end{enumerate}
		
		\begin{proposition}
			\begin{gather*}
				\text{dim ker}\ \overline{L}^*=|bP_{\text{ter}}(L)|. 
			\end{gather*}
		\end{proposition}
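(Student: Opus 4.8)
The plan is to prove the two inequalities $\dim\ker\overline{L}^*\ge |bP_{\text{ter}}(L)|$ and $\dim\ker\overline{L}^*\le |bP_{\text{ter}}(L)|$ separately. For the lower bound, the discussion preceding the statement already produces $|bP_{\text{ter}}(L)|$ elements of $\ker\overline{L}^*$: the Dirac mass $\delta(p)$ at each sticky endpoint $p$ (each quadratic endpoint, and each Kimura tangent endpoint when $L$ is not transverse to both endpoints), together with the fully supported measure $\mu$ of the paragraph ``Construction of $\mu$'' when $L$ is transverse to both endpoints. One only needs to note that these are linearly independent --- Diracs at distinct boundary points are independent, and $\mu$ is absolutely continuous with full support, hence not a linear combination of boundary Diracs --- which gives $\dim\ker\overline{L}^*\ge |bP_{\text{ter}}(L)|$.

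For the upper bound, I would take $\nu\in\ker\overline{L}^*$, so $\int_0^1 Lw\,d\nu=0$ for all $w\in\text{Dom}(\overline{L})$, in particular for $w\in C^{2+\gamma}([0,1])$, and first analyze $\nu$ on the open interval $(0,1)$, where $L$ is uniformly elliptic with smooth coefficients: there $L^*\nu=0$ in $\mathcal{D}'((0,1))$, so by elliptic regularity $\nu|_{(0,1)}=g\,dx$ with $g\in C^\infty((0,1))$ and $(\widetilde{a} g)''-(\widetilde{b} g)'=0$, with $\widetilde{a},\widetilde{b}$ as in \eqref{tilde_notation}. Integrating this once shows $(\widetilde{a} g)'-\widetilde{b} g$ is constant, so $g=\lambda\, m S+\eta\, m$ for constants $\lambda,\eta$, where $S$ is the scale function \eqref{scale_func} and $m=1/(\widetilde{a} S')$ the speed density (so that $m\,dx$ carries no current, $\widetilde{b} m=(\widetilde{a} m)'$). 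Since $\text{supp}\,\nu\subseteq[0,1]$, the only possible atoms of $\nu$ sit at $0$ and $1$, so $\nu=g\,dx+\nu_0\delta(0)+\nu_1\delta(1)$; thus $\ker\overline{L}^*$ embeds into the four-parameter family indexed by $(\lambda,\eta,\nu_0,\nu_1)$, and the task reduces to identifying the admissible parameters.

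The admissibility conditions come from testing against more $w$'s: substituting the above form of $\nu$ into $\int_0^1 Lw\,d\nu=0$ and integrating by parts on $(0,1)$ (where $L^*g=0$) would give, for every $w\in C^{2+\gamma}([0,1])$, a boundary identity $\bigl[\,\widetilde{a}\,g\,w'-\lambda\, w\,\bigr]_0^1+\nu_0\,Lw(0)+\nu_1\,Lw(1)=0$, which on localizing $w$ near one endpoint decouples into one scalar relation at $x=0$ and one at $x=1$. I would then evaluate the boundary limits using the endpoint asymptotics from the proof of Theorem~\ref{maximum_prin} ($S'(x)$ grows like $x^{-c(0)}$, resp. $(1-x)^{c(1)}$, at the left, resp. right, endpoint, so $\widetilde{a}\,mS\to 0$ at every endpoint, while $\widetilde{a}\,m\to 0$ at a quadratic or transverse Kimura endpoint and $\to$ a positive constant at a Kimura tangent endpoint), together with $Lw(p)=0$ at a quadratic endpoint $p$ and $Lw(0)=b(0)\,w'(0)$ at a Kimura endpoint $0$. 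Running this through the ten endpoint-type combinations of Table~\ref{tab:1d} and their mirror images under $x\mapsto 1-x$, one should find in every case that $\lambda=0$ (the stationary probability current vanishes); that $\nu_i=0$ at a transverse Kimura endpoint, the only non-sticky type; and that $g=\eta\, m$ is integrable on $(0,1)$ --- hence $\eta$ may be nonzero --- exactly when both endpoints are transverse. Then $\ker\overline{L}^*$ is spanned by $\{\delta(p):p\text{ sticky}\}$, with $\mu=m\,dx$ adjoined precisely when both endpoints are transverse, so $\dim\ker\overline{L}^*=\#\{\text{sticky endpoints}\}+1$ if $L$ is transverse to both endpoints and $\#\{\text{sticky endpoints}\}$ otherwise; a glance at the definition of $bP_{\text{ter}}(L)$ shows this equals $|bP_{\text{ter}}(L)|$, closing the argument.

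The main obstacle will be the case-by-case boundary bookkeeping just described, and in particular two points: showing the current vanishes even at a transverse Kimura endpoint, where $mS$ is integrable so that integrability alone does \emph{not} eliminate the $\lambda$-direction (it is the boundary relation $\int_0^1 Lw\,d\nu=0$ that does), and ruling out a spurious invariant measure at a neutral quadratic endpoint. One can view the whole upper bound as a correction to the count $\text{codim}\,\text{Range}(L_\gamma)=\dim\ker L_\gamma-\text{ind}(L_\gamma)$ coming from Theorems~\ref{thm 3.1} and \ref{maximum_prin}: the restriction map $\ker\overline{L}^*\to(\alpha\cdot\mathcal{D}^\gamma([0,1]))^*$ has kernel exactly $\text{span}\{\delta(p):p\text{ a quadratic endpoint}\}$ (functions in $\alpha\cdot\mathcal{D}^\gamma$ vanish at quadratic endpoints), so $\dim\ker\overline{L}^*=\#\{\text{quadratic endpoints}\}+\dim(\text{image})$, and the image is nonzero --- necessarily one-dimensional --- exactly when $\mu$ is a finite measure, i.e. when $L$ is transverse to both endpoints.
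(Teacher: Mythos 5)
Your proposal is correct in substance but takes a genuinely different route from the paper. The paper never solves the stationary equation: it gets $\dim\ker L_\gamma^{*}=\dim\ker L_\gamma-\text{ind}(L_\gamma)$ from Theorem~\ref{thm 3.1} and Theorem~\ref{maximum_prin}, splits $C^0([0,1])^{*}=C_0^0([0,1])^{*}\oplus\text{span}\{\delta(p):p\ \text{quadratic}\}$, and compares $\ker\overline{L}^{*}\cap C_0^0([0,1])^{*}$ with $\ker L_\gamma^{*}$ via a density argument and an inclusion map whose codimension (equal to $1$ exactly when there is a neutral endpoint and no tangent endpoint) it computes; $|bP_{\text{ter}}(L)|$ then falls out of this bookkeeping. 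You instead characterize $\ker\overline{L}^{*}$ directly: elliptic regularity on $(0,1)$, the two-parameter family $g=\lambda mS+\eta m$ built from the scale function \eqref{scale_func} and the speed density, atoms only at the endpoints since $\nu$ is a finite measure, and then the boundary identity obtained by integration by parts to pin down the admissible $(\lambda,\eta,\nu_0,\nu_1)$; spot-checking the endpoint combinations of Table~\ref{tab:1d} confirms your claimed outcomes ($\lambda=0$ in all cases, $\nu_i=0$ precisely at transverse Kimura endpoints, $\eta\neq 0$ allowed exactly when both endpoints are transverse, including the neutral quadratic case), so the plan closes. What your route buys is independence from the Fredholm machinery of Theorem~\ref{thm 3.1} and an explicit description of every invariant measure; what it costs is the ten-case boundary bookkeeping, plus one caveat in your parenthetical asymptotics: $\widetilde{a}\,mS=S/S'$ does \emph{not} tend to $0$ at a Kimura tangent endpoint (it tends to $S(0)/S'(0)$, which depends on the normalization of $S$) nor at a tangent or neutral quadratic endpoint; at those endpoints it is the integrability constraint — forcing $g\equiv 0$ locally, or $g\propto m(S-S(0))$ at a Kimura tangent endpoint — that kills the boundary term, not the limit you quote, so the conclusion is unaffected but the justification should be rearranged accordingly. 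Your closing observation essentially reconstructs the paper's index-based count, so the two arguments are complementary: the paper's is shorter once Theorem~\ref{thm 3.1} is available, yours is more elementary and more explicit about what the invariant measures are.
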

		\begin{proof}
			We know that $\text{dim\ ker}\ L_\gamma=2$ if $L$ is tangent to both endpoints and otherwise $\text{dim\ ker}\  L_\gamma=1$. This is equivalent to:
			\begin{align*}
				\text{dim\ ker}\ L_{\gamma}=\text{max}\{1,|\text{tangent\ points}|\}.
			\end{align*}
			By Theorem \ref{thm 3.1}, $\text{ind}(L_{\gamma})=\kappa^{+}+\kappa^{-}=|\text{tangent\ quadratic endpoints}|$, so $\text{dim ker}\ L_\gamma^*=|b P_{ter}(L_\gamma)|$, where
			\begin{gather*}
				bP_{ter}(L_\gamma)=\begin{cases}
					\begin{array}{cc}
						P & \text{\ensuremath{L} is transverse/neutral to both endpoints,}\\
						\text{Kimura tangent endpoint} & \text{otherwise}.
				\end{array}\end{cases}
			\end{gather*}
			
			Write 
			\begin{gather*}
				C^0([0,1])^*= C^0_0([0,1])^*\oplus A^*
			\end{gather*}
			where $A^*=\{\delta(p)|\ p \ \text{quadratic}\}$.   $A^*\subset\text{ker}\ \overline{L}^*$. Since $\widetilde{a}\cdot\mathcal{D}^{\gamma}([0,1])$ is dense in the subspace $C_0^0([0,1])$, every $\mu\in\text{ker}\ {L_\gamma}^*$ can be uniquely extended to a measure in $\text{ker}\ \overline{L}^*\cap C^0_0([0,1])^*$, so there is an inclusion map
			\begin{gather*}
				i:\text{ker}\ \overline{L}^{*}\cap C^0_0([0,1])^{*}\longrightarrow \text{ker}\ L^*_\gamma.
			\end{gather*}
			
			And $\text{codim}(i)=1$ if there are one or more neutral point(s) and no tangent point, while $\text{codim}(i)=0$. This is equivalent to
			\begin{gather*}
				\text{codim}(i)=|bP_{\text{ter}}( L_\gamma)|-|bP_{ter}(L)|+|\text{quadratic endpoint}|.
			\end{gather*}
			
			In conclusion
			\begin{gather*}
				\text{dim ker}\ \overline{L}^{*}=\text{dim ker}\ L_{\gamma}^*-\text{codim}(i)+|\text{quadratic\ point}|=|bP_{ter}(L)|.
			\end{gather*}
		\end{proof}
		The following table summarizes the invariant measures found in the ten different cases of interest:
		\begin{table}[h]
			\centering 
			\begin{threeparttable}
				\begin{tabular}{ccccc}
					& K Trans & K Tan & Q Trans & Q Tan  \tnote{**}\\
					\midrule\midrule
					Kimura Transverse (K Trans) & $\mu$\tnote{*} &  $\delta_1$ &  $\mu$, $\delta_1$ & $\delta_1$ \\ 
					Kimura Tangent (K Tan) & $\delta_0$ & $\delta_0$, $\delta_1$ & $\delta_0$ & $\delta_0$, $\delta_1$ \\
					Quadratic Transverse (Q Trans) & $\mu$, $\delta_0$ & $\delta_0$, $\delta_1$ & $\mu$, $\delta_0$, $\delta_1$ & $\delta_0$, $\delta_1$\\
					Quadratic Tangent (Q Tan)\tnote{**} & $\delta_0$ &$\delta_0$, $\delta_1$ & $\delta_1$ &$\delta_0$, $\delta_1$\\
					\midrule\midrule
				\end{tabular}
				\begin{tablenotes}
					\item[*] $\mu$ refers to an invariant measure supported on (0,1)
					\item[**] include neutral case
				\end{tablenotes} 
			\end{threeparttable}
			\caption{Invariant measures for all cases of boundary types at $x=0$ (rows) and at $x=1$ (columns).}
			\label{tab:1d}
		\end{table}
		
		\subsection{Proof of Theorem \ref{thm 3.1}}
		For convenience of computation, we fix the spatial domain as the interval $[0,3]$ instead of $[0,1]$ whenever convenient in this subsection.
		\subsubsection{Outline of proof}
		It remains to characterize the range of $L_\gamma$. First, we turn the second-order operator $L_\gamma$ to a first-order system $M$ in \eqref{M}, with an index equal to the index of $L_\gamma$ (see Lemma \ref{lmm:3.2} below). Next we continuously deform $M$ to another first-order system $\widetilde{M}$ with constant coefficients in the vicinity of the two endpoints. Such a deformation does not change the index (see Proposition \ref{prop:index} below), so the original problem now is equivalent to the easier linear system $\widetilde{M}$. These constructions are presented in subsection 3.3.2.
		
		The problem $\widetilde{M}u=f=\left(\begin{array}{c}
			f_1\\
			f_2
		\end{array}\right)$, given $u(1)\in\mathbb{R}^2$, has a uniquely defined solution $u$. In order for $u$ to belong to the domain space, we observe that $\alpha f_1$ has to vanish at tangent Kimura endpoints and $u(1), f$ have to be related by
		\begin{gather}\label{relation}
			E_{-}u(1)=I_1f, \quad E_{+}\Lambda u(1)=-I_2f;
		\end{gather}
		see the definition of $I_1, I_2, E_\pm$, and $\Lambda$ in Section 3.4. Conversely if  $\alpha f_1$ vanishes at tangent Kimura endpoints and if we can find $u(1)$ satisfying  the relation \eqref{relation}, then the uniquely defined $u$ is the solution of $\widetilde{M}u=f$ (see Lemma \ref{check1}, Lemma \ref{check2}). Thus $f$ in the range of $\widetilde{M}$ is equivalent to the existence of $u(1)\in\mathbb{R}^2$ satisfying \eqref{relation}. This construction is presented in subsection 3.3.3.
		
		We next show in Lemma \ref{lm 3.4} that the above constraint is equivalent to
		\begin{gather*}
			\Lambda I_1f+I_2f\in \text{ker}(E_{+})+\Lambda\cdot \text{ker}(E_{-}).
		\end{gather*}
		We thus define the linear map
		\begin{gather*}
			\Phi: C^{1+\gamma}([0,3])\times {\mathcal{D}^\gamma([0,3])}\longrightarrow \frac{\mathbb{R}^2}{\text{ker}(E_{+})+\Lambda \text{ker}(E_{-})},
		\end{gather*}
		by assigning $f$ to the coset $[\Lambda I_1f+I_2f]$. Clearly the range of $\widetilde{M}$ lies in the kernel of $\Phi$. A computation in Lemma \ref{phi} shows that $\Phi$ is surjective and $\text{dim\ ker}(\Phi)/R(\widetilde{M})=|\text{kimura tangent points}|$. We now have all the ingredients to compute the codimension of $\widetilde{M}$. The final results and proofs are given in subsection 3.3.4.

		\subsubsection{Reduction to a first order system}
		To simplify notation, we consider $L_\gamma$ defined on the interval $[0,3]$. We rewrite the system $L_\gamma u=f$ as a first-order system $M$:
		\begin{gather}\label{M}
			M: C^{2+\gamma}([0,3])\times C^{1+\gamma}([0,3])\longrightarrow C^{1+\gamma}([0,3])\times\mathcal{D}^{\gamma}([0,3]).
		\end{gather}
		associated to the expression 
		\begin{gather*}
			\begin{pmatrix}u_0\\u_1
			\end{pmatrix}\mapsto   \begin{pmatrix}u_0'\\u_1'
			\end{pmatrix}+A(L_{\gamma})  \begin{pmatrix}u_0\\u_1
			\end{pmatrix}=\begin{pmatrix}
				u_0'-u_1\\u_1'+\frac{b}{a}u_1\end{pmatrix} ,\qquad A(L_\gamma):=\left(\begin{array}{cc}
				0 & -1\\
				0 & \frac{b}{a}
			\end{array}\right).
		\end{gather*}
		where $a, b$ are the coefficients of second-order and first-order term of $L$ under heat coordinate $\phi$, i.e. $L_\phi=a(x)\partial_{xx}+b(x)\partial_x$.
		
		For a smooth coordinate change $z=z(x)$, we let $L_{z}$ be the operator corresponding to $L$ under coordinate change and define $A(L_{z})$ and $M_{z}$ as above.
		If $M\begin{pmatrix}u_0\\u_1\end{pmatrix}=\begin{pmatrix}f_0\\f_1\end{pmatrix}$, then
		\begin{gather}\label{transition}
			M_{z}\begin{pmatrix}u_0[x(z)]\\u_1[x(z)]x'(z)\end{pmatrix}=\begin{pmatrix}f_0[x(z)]x'(z)\\f_1[x(z)](x'(z))^{2}\end{pmatrix}.
		\end{gather}
		
		In the local charts $([0,1],\phi^{-1}),([2,3],\phi^{-1})$,
		$A(t)=\left(\begin{array}{cc}
			0 & -1\\
			0 & b(z)
		\end{array}\right)$ when 0 is a quadratic point with heat coordinate, while $A(t)=\left(\begin{array}{cc}
			0 & -1\\
			0 & \frac{{b}(x)}{x}
		\end{array}\right)$ when $0$ is a Kimura point.

		\begin{lemma}\label{lmm:3.2}
			Assume that $M$ is Fredholm, then $L_\gamma$ is also Fredholm and
			\begin{gather}\label{idx}
				\text{ind}(L_\gamma)=\text{ind}(M). 
			\end{gather}
		\end{lemma}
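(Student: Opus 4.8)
The plan is to exhibit the second-order operator $L_\gamma$ as an explicit combination of the first-order system $M$ with two bounded linear maps (an ``inclusion'' and a ``projection''), and then invoke the standard fact that composing a Fredholm operator with bounded maps that are themselves Fredholm (in particular, isomorphisms or split injections/surjections) preserves the Fredholm property and adds indices. Concretely, define the section map $\iota: C^{2+\gamma}([0,3]) \to C^{2+\gamma}([0,3])\times C^{1+\gamma}([0,3])$ by $\iota(u) = (u, u')$; this is bounded and injective by the very definition of the norms in Definition \ref{def_holder} (note the $\phi'_i$-weighting built into the $C^{1+\gamma}$ norm is exactly calibrated, via \eqref{transition}, so that $u \in C^{2+\gamma}$ forces $u' \in C^{1+\gamma}$), and its range is a closed complemented subspace with complement isomorphic to $C^{1+\gamma}([0,3])$ (choose, e.g., the splitting $(u_0,u_1)\mapsto u_1 - u_0'$). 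Dually, define the projection $\pi: C^{1+\gamma}([0,3])\times \mathcal{D}^\gamma([0,3]) \to \mathcal{D}^\gamma([0,3])$ onto the second factor, which is bounded and surjective with kernel $C^{1+\gamma}([0,3])\times\{0\}$. Then one checks directly from the definition of $M$ that
\begin{gather*}
	\pi \circ M \circ \iota = L_\gamma,
\end{gather*}
since $M(u,u') = (u' - u',\, u'' + \tfrac{b}{a}u') = (0,\, L_\phi u / a)$ and $\alpha\mathcal{D}^\gamma$ is the correct target (the factor $a$ being smooth, nonvanishing, is absorbed).

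Next I would argue the index bookkeeping. Since $\iota$ is a split injection with cokernel $\cong C^{1+\gamma}([0,3])$ and $\pi$ is a split surjection with kernel $\cong C^{1+\gamma}([0,3])$, the composition $\pi\circ M\circ\iota$ is Fredholm whenever $M$ is, and the additivity of the Fredholm index along compositions of semi-Fredholm maps gives
\begin{gather*}
	\text{ind}(L_\gamma) = \text{ind}(\pi) + \text{ind}(M) + \text{ind}(\iota).
\end{gather*}
Here $\text{ind}(\iota) = -\dim(\text{coker}\,\iota)$ and $\text{ind}(\pi) = \dim(\ker\pi)$; since both cokernel of $\iota$ and kernel of $\pi$ are identified with the \emph{same} space $C^{1+\gamma}([0,3])$, one has to be slightly careful — these are infinite-dimensional, so the naive arithmetic $\text{ind}(\iota)=-\infty$ is not available. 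The correct statement is that $\iota$ and $\pi$ are \emph{not} Fredholm individually, but the composition is, precisely because the ``missing'' copy of $C^{1+\gamma}([0,3])$ in the range of $\iota$ is exactly the ``killed'' copy in $\pi$. The clean way to phrase this is: the map $M$ restricted as an operator $\ker\pi \supset M^{-1}(\ker\pi)\cap \text{ran}\,\iota \to \ker\pi$ has the structure of an isomorphism on that complement, so that $L_\gamma$ and $M$ have literally the same kernel and cokernel up to this canonical identification. I would therefore reorganize the argument to avoid subtracting infinities: directly show $\ker L_\gamma \cong \ker M$ and $\text{coker}\,L_\gamma \cong \text{coker}\,M$ as finite-dimensional spaces, which immediately yields both Fredholmness of $L_\gamma$ and the equality \eqref{idx}.

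For the kernel isomorphism: if $u\in\ker L_\gamma$ then $(u,u')\in\ker M$, and conversely if $(u_0,u_1)\in\ker M$ then the first component equation $u_0'=u_1$ forces $u_1=u_0'$ and the second forces $L_\phi u_0=0$, so $u_0\in\ker L_\gamma$; this is a linear bijection. For the cokernel: given $f\in \alpha\mathcal{D}^\gamma([0,3])$, solvability of $L_\gamma u = f$ is equivalent (set $u_1=u_0'$) to solvability of $M(u_0,u_1)=(0, f/a)$; and the map $f \mapsto [(0,f/a)]$ descends to an isomorphism from $\text{coker}\,L_\gamma$ onto the image of $\{0\}\times \mathcal{D}^\gamma$ in $\text{coker}\,M$ — one must check this image is all of $\text{coker}\,M$, i.e. that any $(g_0,g_1)$ is equivalent modulo $\text{ran}\,M$ to something of the form $(0,\ast)$, which follows by solving the triangular first-order ODE $u_0'-u_1=g_0$ with, say, $u_0(1)=0$ to absorb the first component. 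I expect the main obstacle to be exactly this last point — verifying that the reduction of $(g_0,g_1)$ to the form $(0,\ast)$ can be carried out \emph{within the Banach spaces in question} (i.e. the solution $u_0$ of $u_0' = u_1 + g_0$ with $u_1$ chosen appropriately genuinely lies in $C^{2+\gamma}([0,3])$ with the weighted norms near the degenerate endpoints), since the weights $\phi'_i$ degenerate there and one needs the structure of $A(L_\gamma)$ — its upper-triangular form with the $b/a$ entry having the prescribed singularity — to control the integrated solution. Everything else is formal.
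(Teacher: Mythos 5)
Your reorganized argument (identify $\ker L_\gamma$ with $\ker M$ via $u\mapsto(u,u')$, then identify cokernels) has the same skeleton as the paper's proof, and the kernel step and the well-definedness/injectivity of your map $f\mapsto[(0,f/\widetilde a)]$ are fine. The genuine gap is exactly the step you flag and leave open: surjectivity onto $\operatorname{coker}M$, i.e.\ that every $(g_0,g_1)$ is equivalent modulo $\operatorname{ran}(M)$ to a pair of the form $(0,\ast)$. The method you sketch — solve $u_0'-u_1=g_0$ by integration with $u_0(1)=0$ — does not work as stated near the degenerate endpoints: at a quadratic endpoint in heat coordinates a function $g_0\in C^{1+\gamma}$ is merely required to extend continuously to $z=-\infty$, and if its limit there is nonzero its antiderivative grows linearly in $z$, hence fails even to extend continuously to the endpoint and leaves $C^{2+\gamma}$; no choice of integration constant repairs this, and trying to cancel the limit by a judicious $u_1$ still leaves a non-integrable tail in general. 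So as written the crucial step is unproven, and the proposed route for it would fail without modification. (Your opening $\iota,\pi$ factorization with index additivity you correctly discard yourself, so it carries no weight.)

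The repair is easy and requires no integration at all: put $g_0$ into the $u_1$-slot rather than the $u_0$-slot. Since the first target component and the second domain component live in the same space $C^{1+\gamma}([0,3])$, the pair $(0,-g_0)$ lies in the domain of $M$, and
\begin{gather*}
(g_0,g_1)-M(0,-g_0)=\Big(0,\ g_1+g_0'+\tfrac{b}{a}\,g_0\Big),
\end{gather*}
so every class in $\operatorname{coker}M$ has a representative of the form $(0,\ast)$; here $g_0'\in\mathcal{D}^\gamma$ is automatic from the definition of the $C^{1+\gamma}$ norms, and the right-hand side lies in the target because $M$ does. This is precisely the paper's argument read backwards: the paper introduces $\pi(f_0,f_1)=\widetilde a f_0'+\widetilde b f_0+\widetilde a f_1$ mapping the $M$-target to the $L_\gamma$-target, for which surjectivity is trivial ($h=\pi(0,h/\widetilde a)$) and injectivity of the induced map on quotients follows from the explicit preimage $M(u,u'-f_0)=(f_0,f_1)$ — only differentiation is ever used, with $u'\in C^{1+\gamma}$ guaranteed by the $\phi_i'$-weighted norms you correctly identified as calibrated via \eqref{transition}. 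Once your surjectivity step is done this way, your cokernel map and the paper's $\pi$ are mutually inverse, and the two proofs coincide in substance.
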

		\begin{proof}
			There is an isomorphism between $\text{ker}(M)\longrightarrow \text{ker}(L_{\gamma})$ by mapping $(u,u')$ to $u$, so the dimensions of the kernel spaces are the same. 
			
			If $(f_{0},f_{1})\in R(M)$ and $M\begin{pmatrix}u_0\\u_1\end{pmatrix}=\begin{pmatrix}f_0\\f_1\end{pmatrix}$, then $L_\gamma u_0=\widetilde{a} f_{0}'+\widetilde{b} f_{0}+\widetilde{a} f_{1}$. Based on this observation we define the map
			\begin{gather*}
				\pi: C^{1+\gamma}([0,3])\times\mathcal{D}^{\gamma}([0,3])\longrightarrow \alpha\cdot\mathcal{D}^{\gamma}([0,3])\\
				(f_{0},f_{1})\mapsto \widetilde{a} f_{0}'+\widetilde{b} f_{0}+\widetilde{a} f_{1}.
			\end{gather*}
			We claim that 
			\begin{align}\label{codim}
				\text{codim ran}(\pi)=\text{codim\ ran}(L_{\gamma})-\text{codim ran}(M). 
			\end{align}
			We first show that 
			\begin{gather}\label{14}
				f\in \text{ran}(M)\Leftrightarrow\pi f\in \text{ran}(L_{\gamma}). 
			\end{gather}
			The inclusion part is straightforward. For the converse part, if $\pi(f_{0},f_{1})=L_{\gamma}u\in \text{ran}(L_{\gamma})$, we verify that $M\begin{pmatrix}u\\u'-f_0\end{pmatrix}=\begin{pmatrix}f_0\\f_1\end{pmatrix}\in \text{ran}(M)$. Next we show that 
			\begin{gather}\label{15}
				\pi(\text{ran}(M))=\text{ran}(L_{\gamma}).
			\end{gather} Again the inclusion part is straightforward. For the converse part, if $g=L_{\gamma}u\in \text{ran}(L_\gamma)$, then $\pi(0,\frac{g}{\widetilde{a}})=g\in \text{ran}(L_\gamma)$. Note that by \eqref{14}, this implies that $(0,\frac{g}{\widetilde{a}})\in \text{ran}(M)$. \eqref{14} and \eqref{15} imply that the quotient map defined by
			\begin{gather*}
				\pi': [C^{1+\gamma}([0,3])\times\mathcal{D}^{\gamma}([0,3])]/\text{ran}(M)\longrightarrow \alpha\cdot\mathcal{D}^{\gamma}([0,3])/\text{ran}(L_\gamma)
			\end{gather*}
			is injective. So $\text{codim}\  \text{ran}(\pi')=\text{codim}\ \text{ran}(L_{\gamma})-\text{codim}\ \text{ran}(M)$. At the same time,  $\pi(\text{ran}(M))=\text{ran}(L_{\gamma})$ indicates $\text{codim}\ \text{ran}(\pi')=\text{codim} \ \text{ran}(\pi)$ so that \eqref{codim} holds. Clearly, $\pi$ is surjective since $\pi (0,\frac{h}{\widetilde{a}})=h$. Combined with \eqref{codim}, this proves \eqref{idx}.
		\end{proof}
		\paragraph{Index of the model operator $\widetilde{M}$}
		Alongside $M$, we consider the operator $\widetilde{M}$ associated with a continuous function $A(t)$ such that
		\begin{gather*}
			A(t)=A_{-}\ \text{for}\ t\leq 1, A(t)=A_{+}\ \text{for}\ t\geq 2
		\end{gather*}
		where  $A_{-}=\left(\begin{array}{cc}
			0 & -1\\
			0 & b_{-}
		\end{array}\right)$ if 0 is a quadratic point, $A_{-}=\left(\begin{array}{cc}
			0 & -1\\
			0 & \frac{b_{-}}{x}
		\end{array}\right)$ if 0 is a kimura point, and $A_+$ is similarly defined. Since $M$ and $\widetilde{M}$ can be reduced to each other by a continuous deformation in the class of Fredholm operators, we have
		\begin{proposition}\label{prop:index}
			The operator $M$ is Fredholm if and only if the operator $\widetilde{M}$ is Fredholm and 
			\begin{gather}\label{prop:fred_euiv}
				\text{ind}(M)=\text{ind}(\widetilde{M}).
			\end{gather}
		\end{proposition}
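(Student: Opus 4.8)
The operators $M$ and $\widetilde{M}$ send $(u_0,u_1)$ to $(u_0'-u_1,\ u_1'+\tfrac{b}{a}u_1)$ and to $(u_0'-u_1,\ u_1'+\beta(t)u_1)$, where $\beta(t)$ is the model coefficient (equal to $b_-/x$ or $b_-$ for $t\le 1$, to $b_+/(1-x)$ or $b_+$ for $t\ge 2$, and some continuous interpolation in between). Their difference $N:=M-\widetilde{M}$ is therefore the zeroth-order operator $N(u_0,u_1)=(0,\ r(t)u_1)$ with $r:=\tfrac{b}{a}-\beta$, which is bounded and continuous on $[0,3]$, smooth in the interior, exponentially small together with all its derivatives near each quadratic endpoint, and merely bounded (and smooth) near each Kimura endpoint. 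The plan is to prove that $N\colon C^{2+\gamma}([0,3])\times C^{1+\gamma}([0,3])\to C^{1+\gamma}([0,3])\times\mathcal{D}^{\gamma}([0,3])$ is compact. Granting this, $M=\widetilde{M}+N$, so $M$ is Fredholm iff $\widetilde{M}$ is and $\text{ind}(M)=\text{ind}(\widetilde{M})$, by stability of the Fredholm property and of the index under compact perturbations; moreover $s\mapsto\widetilde{M}+sN$ is an affine, hence norm-continuous, path from $\widetilde{M}$ to $M$ lying entirely in the Fredholm operators of this fixed index, which is exactly the ``continuous deformation in the class of Fredholm operators'' asserted.

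Since $N$ ignores the $u_0$-slot and the inclusion into the second factor of the target is bounded, compactness of $N$ reduces to compactness of the multiplication operator $T\colon u_1\mapsto r\,u_1$ from $C^{1+\gamma}([0,3])$ to $\mathcal{D}^{\gamma}([0,3])$. Using the partition of unity of Definition \ref{def_holder}, this splits into an interior piece and one piece near each endpoint. On the interior chart the spaces are ordinary Hölder spaces on a compact interval and $r$ is smooth, so $T$ there is a bounded operator composed with the compact embedding $C^{1,\gamma}\hookrightarrow C^{\gamma}$. Near a quadratic endpoint, in the heat coordinate $z\in(-\infty,c]$ one has $r(z)=b(z)-b_\pm$ with $b(z)=\beta_0(x(z))$ for a smooth $\beta_0$ and $x(z)\to 0$ exponentially, so $r,r'$ decay exponentially; hence, for $u_1$ ranging over a bounded subset of $C^{1+\gamma}((-\infty,c])$, the tails $\sup_{z<-R}\big(|r u_1|+|(r u_1)'|\big)$ tend to $0$ uniformly as $R\to\infty$, forcing the tail of the local $\mathcal{D}^{\gamma}$-norm (the Hölder norm together with the $\sup|\int|$ and, in the neutral subcase, the iterated-integral term, estimated by integration by parts using $r'\in L^1$) to be uniformly small; combined with Arzel\`a--Ascoli on $[-R,c]$ and a diagonal extraction, this yields precompactness of $\{r u_1\}$ in $\mathcal{D}^{\gamma}((-\infty,c])$.

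The genuinely delicate case is a Kimura endpoint, where in the coordinate $x\in(0,c]$ the factor $r(x)=(b(x)-b(0))/x$ is smooth and bounded but in general nonzero at $x=0$, so no decay is available. There the local $\mathcal{D}^{\gamma}$-norm is $|x f|_\gamma$, and the idea is to read off a gain of regularity from the weight: for $u_1$ in a bounded subset of $C^{1+\gamma}((0,c])$ one has $\partial_x(x u_1)=u_1+x\partial_x u_1$, where both $u_1$ and $x\partial_x u_1$ are uniformly bounded (the latter by the definition of $C^{1+\gamma}$, and it vanishes at $x=0$), so $\{x u_1\}$ is uniformly Lipschitz on $[0,c]$, hence precompact in $C^{\gamma}([0,c])$ because $\gamma<1$; multiplication by the fixed smooth $r$ then gives precompactness of $\{x\,(r u_1)\}$ in $C^{\gamma}([0,c])$, i.e. of $\{r u_1\}$ in $\mathcal{D}^{\gamma}((0,c])$. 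This step is the main obstacle: one has to exploit the precise structure of the weighted spaces---the weight $x$ built into $\mathcal{D}^{\gamma}$ and the enforced vanishing of $x\partial_x f$ in $C^{1+\gamma}$---to extract Lipschitz control from a zeroth-order operator applied to a function of only first-order regularity, and this is exactly where the degeneracy of $L$ at the endpoint is used.

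Once $T$, and hence $N$, is shown to be compact, the remainder is routine functional analysis: the Fredholm property and the index are stable under the compact perturbation $N$, which gives both directions of the equivalence and $\text{ind}(M)=\text{ind}(\widetilde{M})$; and the affine path $s\mapsto\widetilde{M}+sN$ exhibits the deformation claimed in the statement. I would also verify along the way the elementary but necessary facts that $T$ is in fact bounded $C^{1+\gamma}([0,3])\to\mathcal{D}^{\gamma}([0,3])$ (so that ``compact'' is meaningful), and that the first rows of $A(L_\gamma)$ and of the model matrix agree, so that $N$ genuinely acts only on the $u_1$-component.
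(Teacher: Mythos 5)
Your proposal is correct and follows essentially the same route as the paper: write $M-\widetilde{M}$ as the zeroth-order operator $(u_0,u_1)\mapsto(0,ru_1)$, prove it is compact chart by chart (exponential decay of $r$ plus a diagonal extraction at quadratic endpoints; the uniform Lipschitz bound on $xu_1$ coming from $\partial_x(xu_1)=u_1+x\partial_xu_1$ at Kimura endpoints), and invoke stability of the Fredholm property and index under compact perturbation. The only cosmetic difference is at the Kimura endpoint, where you use the compact embedding of Lipschitz functions into $C^{\gamma}$ directly, whereas the paper reaches the same precompactness of $\{xq_n\}$ in $C^{\gamma}$ via Arzel\`a--Ascoli and the substitution $y=x^{\gamma}$.
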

		\begin{proof}
			We need to show that $M-\widetilde{M}$ is a compact map since then by \cite[§27.1 Theorem 3]{lax2014functional}, $\widetilde{M}$ has the same index as $M$. 
			
			Given a bounded sequence $u_k$ in $C^{2+\gamma}([0,3])\times C^{1+\gamma}([0,3])$, we need to show that there exists a convergent subsequence $v_k$ of $(M-\widetilde{M})u_k$. $v_k$ convergent in $C^{1+\gamma}([0,3])\times\mathcal{D}^\gamma([0,3])$ is equivalent to  $v_{k}|_{[0,1]},v_{k}|_{[1,2]},v_{k}|_{[2,3]}$ convergent in the local spaces, respectively. Thus it suffices to show that $(M-\widetilde{M})|_{[0,1]},(M-\widetilde{M})|_{[2,3]}$ are compact.
			
			To check both Kimura and quadratic cases, we assume that $L_0$ is Kimura type and $L_1$ is of quadratic type. 
			
			At $x=0$, given a bounded sequence $(u_n,q_n)$ in $C^{2+\gamma}([0,1])\times C^{1+\gamma}([0,1])$ , we have \begin{gather*} (M-\widetilde{M})(u_n,q_n)=(0,\frac{b(x)-b_{-}}{x}q_{n}).
			\end{gather*} 
			Since $\frac{b(x)-b_{-}}{x}$ is smooth and bounded, it remains to show by definition of $\mathcal{D}^\gamma([0,1])$ that $xq_n$ has a convergent subsequence in $C^\gamma([0,1])$.
			
			Since $\{q_n,xq_n'\}$ are bounded in $C^\gamma([0,1])$, then $\{xq_n\}$ are uniformly bounded and uniformly equicontinuous. Therefore by the Arzela-Ascoli theorem, there exists a convergent subsequence $v_n$ of $xq_n$ in $C^0([0,1])$. For the $[\cdot]_\gamma$ part, notice that for $g_n(y)=v_n(x)$ under the coordinate change $y=x^\gamma$, 
			\begin{gather}\label{vn}
				[v_n]_\gamma=O(||g_n'||_\infty).
			\end{gather}
			Since $g_n'=\frac{1}{\gamma}v_n'x^{1-\gamma}$ and $\{v'_n\}$ are uniformly bounded in $C^\gamma([0,1])$, $\{g'_n\}$ are uniformly bounded and uniformly equicontinuous. Thus, there exists a subsequence of $g'_n$ convergent in $C^0([0,1])$. By \eqref{vn}, the corresponding subsequence of $v_n$ converges in $C^\gamma([0,1])$. Therefore $(M-M')|_{[0,1]}$ is a compact operator. 
			
			At $x=3$, we prove the result using heat coordinates in $[-\infty,0]$. Given a bounded sequence $(u_{n},q_{n})$ in $C^{2+\gamma}((-\infty,0])\times C^{1+\gamma}((-\infty,0])$, write \begin{gather*} (M-\widetilde{M})(u_n,q_n)=(0,(b(z)-b_{+})q_{n}).
			\end{gather*} 
			Since $\{q'_n\}$ is bounded in $C^\gamma(T)$for any compact set $T$, there exists a convergent subsequence of $\{q_n\}$ in $C^\gamma(T)$. We choose a convergent subsequence $\{q^k_1\}_{k\geq 1}$ on $[-1,0]$, and $\{q^k_2\}_{k\geq 1}$ a convergent subsequence of $\{q^k_1\}$ on $[-2,0]$. Iteratively, we choose $\{q^k_n\}_{k\geq 1}$ to be a convergent subsequence of $\{q^k_{n-1}\}$ on $[-n,0]$.
			
			We show below that the diagonal sequence $\{\left(b(z)-b_{+}\right)q^k_k\}_{k\geq 1}$ converges in $\mathcal{D}^\gamma((-\infty,0])$. Since $||q_k^k||_\gamma$ is bounded and $b(z)-b_{-}$ tends to 0 as $z$ tends to $-\infty$, there exist $N> 0, K>0$ such that
			\begin{gather*}
				\max\Big\{|(b(z)-b_{-})q_k^k|,\quad [(b(z)-b_{-})q_k^k]_\gamma, \quad \underset{z_1\leq z_2\leq -N}{\sup}\left|\int_{z_1}^{z_2}(b(z)-b_{-})q_k^kdz\right|\Big\} \leq\epsilon/4, \text{ on }\ (-\infty, -N]\\
				\{q^k_k\}_{k\geq K}\ \text{converge in}\ C^\gamma([-N,0]).
			\end{gather*} 
			So we can pick $N'$ such that  $||(b(z)-b_{-})q^{m}_{m}-(b(z)-b_{-})q^{n}_{n}||_{\gamma, [-N,0]}<\frac{\epsilon}{2}$ when $m,n>N'$. Combining the above, we have $||(b(z)-b_{-})q^{m}_{m}-(b(z)-b_{-})q^{n}_{n}||_{\gamma, (-\infty,0]}<\epsilon$ when $m,n>N'$. We proved that the diagonal sequence $\{\left(b(z)-b_{+}\right)q^k_k\}_{k\geq 1}$ converged in $\mathcal{D}^\gamma((-\infty,0])$. Therefore $(M-M')|_{[2,3]}$ is a compact operator. It is not hard to show that $(M-M')|_{[1,2]}$ is also a compact operator so that $M-\widetilde{M}$ is compact.
		\end{proof}
		
		\subsubsection{Representation of solutions of the modeling operator}
		The results in the previous subsection show that $\text{ind} (L_\gamma)=\text{ind}(\widetilde{M})$ if $\widetilde{M}$ is Fredholm. Now we turn to proving that $\widetilde{M}$ is Fredholm and computing the index of $\widetilde{M}$. Recall that
		\begin{gather}
			\widetilde{M}: C^{2+\gamma}([0,3])\times C^{1+\gamma}([0,3])\longrightarrow C^{1+\gamma}([0,3])\times\mathcal{D}^{\gamma}([0,3])
		\end{gather}
		is associated to the expression 
		\begin{gather*}
			\begin{pmatrix}u_0\\u_1
			\end{pmatrix}\mapsto \begin{pmatrix}u_0'\\u_1'
			\end{pmatrix}+A(t)\begin{pmatrix}u_0\\u_1
			\end{pmatrix}, \qquad A(t)=A_{-}\ \text{for}\ t\leq 1, \ \ A(t)=A_{+}\ \text{for}\ t\geq 2.
		\end{gather*}

		Let $f\in C^{1+\gamma}([0,3])\times\mathcal{D}^\gamma([0,3])$ be given, and suppose $u\in C^{2+\gamma}([0,3])\times C^{1+\gamma}([0,3])$ solves
		\begin{gather*}
			\frac{du}{dt}+Au=f.
		\end{gather*}
		Let $Y=Y(t)$ denote the fundamental matrix associated to $\widetilde{M}$, defined in different intervals by 
		\begin{gather*}
			\frac{dY}{dt}+AY(t)=0,\ Y(1)=I, \text{on}\ [0,2)\\
			\frac{dY}{dt}+AY(t)=0,\ Y(2+)=I, \text{on}\ (2,3].
		\end{gather*}
		Then $\Lambda:=Y(2-)$ is an invertible matrix and
		\begin{gather*}
			u(2)=\Lambda u(1)+\Lambda\int_{1}^{2}Y(s)^{-1}f(s)ds.
		\end{gather*}
		Given $u(1)$, we can uniquely determine $u(t)$ on $[0,3]$:
		\begin{gather}
			\label{solution_u1}
			u(t)=Y(t)u(1)-\int_t^1 Y(t)Y^{-1}(s)f(s)ds,\ t\leq 1\\
			\label{solution_u2}
			u(t)=Y(t)\left(\Lambda u(1)+\Lambda\int_1^2Y(s)^{-1}f(s)ds\right)+\int_2^tY(t)Y^{-1}(s)f(s)ds,\ t\geq 2.
		\end{gather}
		We analyze the above solutions for different boundary points. For quadratic endpoints, we directly assume $t$ is in heat coordinates.
		
		\paragraph{1. Quadratic endpoints with $b_\pm\neq 0$,}
		$A_\pm=\left(\begin{array}{cc}
			0 & -1\\
			0 & b_\pm 
		\end{array}\right)$ constant matrix.  Then 
		\begin{align}
			Y(t)=\left\{ \begin{array}{cc}
				e^{-(t-1)A_{-}}, & t\in (-\infty,1] \\
				e^{-(t-2)A_{+}}, & t\in [2,+\infty).
			\end{array}\right.
		\end{align}  $A_\pm$ has two different eigenvalues $0, b_\pm$.
		We let $E_-$ be the spectral projector associated to the set of all positive eigenvalues of $A_-$, and $E_+$ be the spectral projector associated to the set of all negative eigenvalues of $A_+$. Apply the operator $E_{-}, E_{+}$ on $\eqref{solution_u1}, \eqref{solution_u2}$, and let $t$ go to $-\infty,\infty$ to get:
		\begin{gather}
			\label{e1}
			E_{-}u(1)=\int_{-\infty}^{1}e^{{(s-1)}A_{-}}E_{-}f(s)ds,\\
			\label{e2}
			E_{+}\Lambda u(1)=-E_{+}\Lambda\int_{1}^{2}Y(s)^{-1}f(s)ds -\int_{2}^{\infty}E_+e^{(s-2)A_{+}}f(s)ds.
		\end{gather}
		Using this relation, we recast $u(t)$ for $t<1$ as:
		\begin{gather}\label{u1}
			u(t)=e^{-(t-1)A_{-}}(I-E_{-})u(1)-\int_{t}^{1}e^{-(t-s)A_{-}}(I-E_{-})f(s)ds+\int_{-\infty}^{t}e^{-(t-s)A_{-}}E_{-}f(s)ds
		\end{gather}
		and for $t>2$ as:
		\begin{gather}\label{u2}
			u(t)= e^{-(t-1)A_{+}}(I-E_{+})\Lambda\left[u(1)+\int_1^2Y(s)^{-1}f(s)ds\right]\\ +\int_2^{t}e^{-(t-s)A_{+}}(I-E_{+})f(s)ds
			-\int_{t}^{\infty}e^{-(t-s)A_{+}}E_{+}f(s)ds. \nonumber
		\end{gather}
		
		\paragraph{2. Kimura transverse point} In this case
		\begin{gather*}
			Y(t)=\left(\begin{array}{cc}
				1 & F(t^{-b_{-}})-F(1)\\
				0 & t^{-b_{-}}
			\end{array}\right), \ t\in [0,1], \\
			Y(t)=\left(\begin{array}{cc}
				1 & F((\frac{t}{2})^{-b_+})-F(1)\\
				0 & (\frac{t}{2})^{-b_+}.
			\end{array}\right), \ t\in [2,3],
		\end{gather*}
		where $F$ denotes the antiderivative.
		We let $E_\pm=\left(\begin{array}{cc}
			0 & 0\\
			0 & 1
		\end{array}\right)$.
		Multiplying by $E_{-}Y(t)^{-1}$ both sides of \eqref{solution_u1}, \eqref{solution_u2}, and letting $t\to 0, t\to 3$, respectively, we obtain 
		\begin{gather}
			\label{relation_1}
			E_{-}u(1)=\int_{0}^{1}E_{-}Y(s)^{-1}f(s)ds,\\
			\label{relation_2}
			E_+\Lambda u(1)=-E_+\Lambda\int_{1}^{2}Y(s)^{-1}f(s)d-\int_{2}^{3}E_+Y^{-1}(s)f(s)ds.
		\end{gather}
		Using this relation we recast $u(t)$ for $t<1$ as:
		\begin{gather}\label{u7}
			u(t)=Y(t)(I-E_-)u(1)+ Y(t)\int_0^tE_{-}Y^{-1}(s)f(s)ds-Y(t)\int_t^1(I-E_-)Y^{-1}(s)f(s)ds
		\end{gather}
		and for $t>2$ as:
		\begin{gather}\label{u8}
			u(t)=Y(t)(I-E_{+})\Lambda\left(u(1)+\int_{1}^{2}Y(s)^{-1}f(s)ds\right)\\
			+\int_2^tY(t)(I-E_+)Y^{-1}(s)f(s)ds- \int_t^3Y(t)E_+Y^{-1}(s)f(s)ds. \nonumber
		\end{gather}
		
		\paragraph{3. Quadratic endpoints with $b_\pm=0$} In this case
		\begin{gather*}
			Y(t)=\left(\begin{array}{cc}
				1 & t-1\\
				0 & 1
			\end{array}\right), \ t\in [0,1],\qquad \quad
			Y(t)=\left(\begin{array}{cc}
				1 & t-2\\
				0 & 1.
			\end{array}\right), \ t\in [2,3].
		\end{gather*}
		We let $E_\pm=\left(\begin{array}{cc}
			0 & 0\\
			0 & 1
		\end{array}\right)$.
		Multiply by $e^{t}$ on both sides of \eqref{solution_u1}, \eqref{solution_u2} and let $t\to\pm\infty$. We again obtain
		\eqref{relation_1}, \eqref{relation_2} with the integral lower/upper points $0,3$ replaced by $-\infty, \infty$. So we can re-express $u(t)$ as \eqref{u7}, \eqref{u8} again with integral lower/upper points $0,3$ replaced with $-\infty, \infty$.

		\paragraph{4. Kimura tangent point} In this case, $A_\pm=\left(\begin{array}{cc}
			0 & -1\\
			0 & 0
		\end{array}\right)$. We observe that if $f=(f_0,f_1)=\widetilde{M}(u_0, u_1)\in \text{ran}(\widetilde{M})$, then $f_1=u_1'$, so that $s(s-3)f_1(s)$ vanishes at Kimura tangent endpoints by definition of $C^{1+\gamma}([0,3])$. We next let $E_\pm=\left(\begin{array}{cc}
			0 & 0\\
			0 & 0
		\end{array}\right)$. Obviously \eqref{relation_1}, \eqref{relation_2} are true then. So we can also recast $u(t)$ as \eqref{u7}, \eqref{u8}.
		
		\medskip
		Summarizing these boundary cases, we make the following definition.
		\begin{definition}
			We define
			\begin{gather*}
				I_1f=\int_{l}^1E_{-}Y(s)^{-1}f(s)ds, \\
				I_2f=E_{+}\Lambda{\int_{1}^{2}}Y(s)^{-1}f(s)ds+ \int_2^rE_+Y^{-1}(s)f(s)ds,
			\end{gather*}
			with $l,r=-\infty,\infty$ at a quadratic endpoint and $l,r=0,3$ at a Kimura endpoint.
			\end{definition}
			In all cases, when $f$ satisfying the corresponding boundary conditions is in the range of $\widetilde{M}$, then $I_1f, I_2f$ are related by
			\begin{gather}\label{11}
				E_{-}u(1)=I_1f, \quad E_{+}\Lambda u(1)=-I_2f.
			\end{gather}

				For $f$ satisfying the relation \eqref{11}, we fix a constant $u(1)$ as in \eqref{11}, and then define $u(t)$ on the whole domain by \eqref{u7} and \eqref{u8} (or \eqref{u1} and \eqref{u2} for quadratic endpoints). We now show that such a $u$ is indeed the solution. 

				\begin{lemma}\label{check1}
					For $U=(-\infty,c]$ or $U=[c,\infty)$, if $f=(f_0,f_1)\in C^{1+\gamma}(U)\times\mathcal{D}^{\gamma}(U)$ together with some $u(1)\in\mathbb{R}^2$ satisfy \eqref{11}, then $u(t)$ defined by \eqref{u1},\eqref{u2} is in $C^{2+\gamma}(U)\times C^{1+\gamma}(U)$ and solves 
					\begin{align*}
						\frac{du}{dt}+Au=f.
					\end{align*}
				\end{lemma}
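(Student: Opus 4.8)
The plan is to verify the two assertions separately: that the $u$ produced by \eqref{u1}--\eqref{u2} genuinely solves $u'+Au=f$ on $U$, and that it lands in the weighted space $C^{2+\gamma}(U)\times C^{1+\gamma}(U)$. Since $U$ is a half-line in heat coordinates attached to a single quadratic endpoint, by symmetry it suffices to treat $U=(-\infty,c]$, where $A=A_{-}=\left(\begin{smallmatrix}0&-1\\0&b_{-}\end{smallmatrix}\right)$ is constant. The whole argument is organized around the eigendecomposition of $A_{-}$: eigenvalue $0$ with eigenvector $v_{0}=(1,0)^{\top}$, and (when $b_{-}\neq0$) eigenvalue $b_{-}$ with eigenvector $v_{+}=(1,-b_{-})^{\top}$, so that $e^{-(t-s)A_{-}}$ is the identity on $\mathrm{span}\,v_{0}$ and multiplication by $e^{-(t-s)b_{-}}$ on $\mathrm{span}\,v_{+}$.

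First I would check the ODE by differentiating \eqref{u1} term by term. The only delicate point is convergence of the improper integral $\int_{-\infty}^{t}e^{-(t-s)A_{-}}E_{-}f(s)\,ds$: on $\mathrm{ran}\,E_{-}$ the kernel decays like $e^{-(t-s)b_{-}}$ with $b_{-}>0$, and after one integration by parts against the antiderivative $G(t)=\int_{1}^{t}f_{1}(s)\,ds$ this term is dominated by $\sup_{z_{1}\le z_{2}\le c}|\int_{z_{1}}^{z_{2}}f_{1}|$, which is finite by the definition of $\mathcal{D}^{\gamma}(U)$. Differentiating the three terms of \eqref{u1} then produces the boundary contributions $(I-E_{-})f(t)$ and $E_{-}f(t)$, whose sum is $f(t)$, together with $-A_{-}$ applied to the remaining terms; hence $u'+A_{-}u=f$ on all of $U$.

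For the regularity I would decompose $u=(I-E_{-})u+E_{-}u$. On $\mathrm{ran}\,E_{-}$ (present only in the transverse case $b_{-}>0$) the relation \eqref{11} has been imposed precisely to cancel the modes of $e^{-(t-1)A_{-}}u(1)$ that blow up as $t\to-\infty$, so $E_{-}u(t)=\int_{-\infty}^{t}e^{-(t-s)b_{-}}[E_{-}f(s)]\,ds$; rewriting $E_{-}f$ through the bounded antiderivative $G$ and integrating by parts shows this is bounded, tends to $0$ at $-\infty$, and is of class $C^{1+\gamma}$, since convolution against a decaying exponential gains a derivative and preserves $C^{\gamma}$. On $\mathrm{ran}(I-E_{-})$ the only surviving direction is the neutral one $v_{0}$, along which $u$ equals $c_{0}v_{0}-\big(\int_{t}^{1}\alpha(s)\,ds\big)v_{0}$, with $c_{0}$ the $v_{0}$-component of $u(1)$ and $\alpha$ the $v_{0}$-component of $f$; here one invokes that the relevant scalar component of $f$ vanishes at the quadratic endpoint and has a convergent bounded antiderivative (this, together with the $\mathcal{D}^{\gamma}$ bound on the $f_{1}$-part of $\alpha$, is where the boundary conditions built into the domain and target spaces enter) to conclude that $\int_{t}^{1}\alpha$ has a finite limit at $-\infty$ and is $C^{1+\gamma}$. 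Combining these, $u_{0}$ and $u_{1}$ extend continuously to $-\infty$; the remaining derivatives are then recovered algebraically from the equation, $u_{1}'=f_{1}-b(z)u_{1}$ and $u_{0}''=u_{1}'+f_{0}'=f_{1}-b(z)u_{1}+f_{0}'$, so that $u_{1}',u_{0}''$ extend continuously (with $u_{1}'(-\infty)=u_{0}''(-\infty)=0$) and inherit finite $C^{\gamma}$ and $\mathcal{D}^{\gamma}$ norms from $f_{0},f_{1},u_{1}$, using e.g. $\int_{z_{1}}^{z_{2}}b(z)u_{1}=\int_{z_{1}}^{z_{2}}f_{1}-(u_{1}(z_{2})-u_{1}(z_{1}))$ to control the integral term. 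This yields $u_{0}\in C^{2+\gamma}(U)$ and $u_{1}\in C^{1+\gamma}(U)$.

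Two boundary cases remain, and I expect them to carry the real work. In the tangent quadratic case $b_{-}<0$ one has $E_{-}=0$, so \eqref{u1} reduces to the forward solution $e^{-(t-1)A_{-}}u(1)-\int_{t}^{1}e^{-(t-s)A_{-}}f(s)\,ds$; the $v_{+}$-direction is then automatically tame because $e^{(s-t)b_{-}}\le1$ for $s\ge t$ and tends to $0$, while the neutral direction is treated exactly as above. The genuinely delicate case is the neutral quadratic endpoint $b_{\pm}=0$, governed by \eqref{u7}--\eqref{u8} with the integration limits pushed to $\pm\infty$: there $Y(t)$ has a nilpotent part, so $e^{-(t-s)A_{\pm}}$ grows linearly in $t-s$ and the relevant integrals become of the form $\int_{t}^{1}(s-t)(\cdots)\,ds$ and iterated integrals of $f_{1}$, whose boundedness and continuous extension can only be extracted from the second, double-integral term in the neutral $\mathcal{D}^{\gamma}$ norm in the guise recorded in Remark \ref{int:neutral}. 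Verifying that these iterated-integral bounds place $u$ in $C^{2+\gamma}(U)\times C^{1+\gamma}(U)$, and pinning down the limits at $-\infty$ so that the compatibility $u_{1}'(-\infty)=0$ really holds, is the main obstacle.
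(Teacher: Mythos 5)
Your treatment of the cases $b_{\pm}\neq 0$ follows essentially the paper's route: the paper also diagonalizes $A_{\pm}=CB_{\pm}C^{-1}$ and works in the eigenbasis, with \eqref{11} serving exactly to remove the growing mode in the transverse case. The one technical difference is how the H\"older regularity is obtained: you estimate the variation-of-constants formula directly (exponential convolution gains a derivative, then recover $u_1'$ and $u_0''$ algebraically from the system), whereas the paper observes that $u_0$ solves the uniformly elliptic scalar equation $u_0''+b_-u_0'=f_0'+f_1+b_-f_0$ on the half-line with vanishing derivative limits and simply cites classical Schauder theory, after which $u_1=u_0'-f_0\in C^{1+\gamma}$. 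Your direct argument is more self-contained; the paper's is shorter. Either way the substance for $b_\pm\neq0$ matches.

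The genuine shortfall is the neutral case $b_{\pm}=0$, which you explicitly leave as ``the main obstacle'' rather than resolve. In the paper this is in fact the quickest part, and no $(s-t)$-growth estimates on iterated integrals are needed once you write the solution in closed form: $u_1(t)=u_1(1)-\int_t^1 f_1(s)\,ds$ and, after one integration by parts, $u_0(t)=u_0(1)-u_1(1)+t\,u_1(t)-\int_t^1\bigl[f_0(s)-s f_1(s)\bigr]ds$. Condition \eqref{11} (i.e.\ \eqref{e1} with $E_\pm=\bigl(\begin{smallmatrix}0&0\\0&1\end{smallmatrix}\bigr)$ and the limits pushed to $\mp\infty$) says precisely $u_1(1)=\int_{-\infty}^{1}f_1$, so $u_1(t)=\int_{-\infty}^{t}f_1\to 0$, while Remark \ref{int:neutral} (the double-integral term of the neutral $\mathcal{D}^\gamma$ norm) controls $\int s f_1(s)\,ds$ and hence both $t\,u_1(t)\to 0$ and the last integral's limit; the compatibility you worried about, $u_1'(-\infty)=0$, is automatic because $u_1'=f_1$ and the bounded-antiderivative condition forces $f_1(-\infty)=0$. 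Then $u_1'=f_1\in\mathcal{D}^\gamma$ gives $u_1\in C^{1+\gamma}$, and $u_0'=u_1(1)+f_0(t)-\int_t^{1}f_1$, $u_0''=f_0'+f_1$ give $u_0\in C^{2+\gamma}$. So the tool you named is the right one, but your proposal stops exactly where this short computation has to be carried out; to be a complete proof it must be done.
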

				\begin{proof}
					When $b_{\pm}\neq 0$, $A_{\pm}$ can be diagonalized as 
					\begin{align*}
						A_{\pm}=\left(\begin{array}{cc}
							1 & -1\\
							0 & b_{\pm}
						\end{array}\right)\left(\begin{array}{cc}
							0 & 0\\
							0 & b_{\pm}
						\end{array}\right)\left(\begin{array}{cc}
							1 & -1\\
							0 & b_{\pm}
						\end{array}\right)^{-1}=CB_\pm C^{-1}.
					\end{align*}
					By changing a basis $u\mapsto C^{-1}u$, we may assume that $A_{\pm}$ has the diagonal form $B_\pm$ and $E_{\pm}$ is the corresponding projector of $A_{\pm}$. For convenience we only prove \eqref{u1} as \eqref{u2} is obtained similarly. 
					
					For the solution $u=(u_{0},u_{1})$, it is straightforward to see that $u_{0}\in C^2((-\infty,c])$, $\underset{z\to -\infty}{\lim}\partial_{z}u=\underset{z\to\ -\infty}{\lim}\partial_{z}^{2}u=0$ and satisfies $\partial_{z}^{2}u+b_{-}\partial_{z}u=f_{1}'+f_{2}+bf_{1}$. Hence the classical result of elliptic operator in H\"older spaces gives that $u_{0}\in C^{2+\gamma}((-\infty,c])$. Then $u_{1}=u_{0}'-f_{1}\in C^{1+\gamma}((-\infty,c])$.
					
					When $b_\pm=0$, we explicitly express $u(t)$ for $t<1$ as follows:
					\begin{gather}\label{u_0,1}
						u_0(t)=u_0(1)-u_1(1)+tu_1(t)-\int_t^1[f_0(s)-sf_1(s)]ds,\\
						u_1(t)=u_1(1)-\int_t^1f_1(s)ds.
					\end{gather}
					Since $f$ satisfies \eqref{e1},
					by Remark \ref{int:neutral}, $u_0, u_1$ are integrable up to $-\infty$. $u_1'(t)=f_1(t)\in C^\gamma((-\infty, c])$ so $u_1\in C^{1+\gamma}((-\infty, c])$. And  \[u_0'(t)=u_1(1)+f_0(t)-\int^0_tf_1(s)ds,\ \  u_0''(t)=f_0'(t)+f_1(t).\] 
					$u_0''\in C^\gamma((-\infty,c])$ and $u_0'$ is integrable up to $-\infty$, so $u_0'\in C^{1+\gamma}((-\infty,c])$. Thus, $u\in C^{2+\gamma}((-\infty, c])\times C^{1+\gamma}((-\infty, c])$ and solves $\frac{du}{dt}+Au=f$. 
				\end{proof}
				
				\begin{lemma}\label{check2}
					For $U=[0,1]$ or $U=[2,3]$, if $f=(f_0,f_1)\in C^{1+\gamma}(U)\times\mathcal{D}^{\gamma}(U)$ together with some $u(1)\in\mathbb{R}^2$satisfies \eqref{11} and $s(s-3)f_1(s)$ vanishes at Kimura tangent endpoints, then $u(t)$ defined by \eqref{u7},\eqref{u8} is in $C^{2+\gamma}(U)\times C^{1+\gamma}(U)$ and solves
					\begin{gather*}
						\frac{du}{dt}+Au=f.
					\end{gather*}
				\end{lemma}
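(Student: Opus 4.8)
The plan is to mirror the argument of Lemma \ref{check1}, but now adapted to the Kimura (and degenerate $b_\pm=0$ quadratic) endpoints, where the fundamental matrix $Y(t)$ has the explicit triangular forms recorded in cases 2--4 above. First I would reduce to the model case by working in the heat coordinates already fixed, so that on $U=[0,1]$ the relevant endpoint is $x=0$ with $A=A_-$ (the case $U=[2,3]$ with $x=3$ being identical after the obvious reflection). I would then substitute the prescribed $u(1)$ satisfying \eqref{11} into the explicit formula \eqref{u7}, and verify directly that $\frac{du}{dt}+Au=f$; this is a routine differentiation of \eqref{u7} using $\frac{dY}{dt}+AY=0$ together with the fundamental theorem of calculus on the two integral terms, and it does not use the boundary relation at all. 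The content is entirely in showing that the $u$ so produced lands in $C^{2+\gamma}(U)\times C^{1+\gamma}(U)$, i.e. that $u_0, u_1$ extend continuously to the endpoint with $x\partial_x u_0$ and $x\partial_x^2 u_0$ vanishing there, and that the local $C^{2+\gamma}$ norm is finite.

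For the regularity, I would split $u$ in \eqref{u7} into the homogeneous piece $Y(t)(I-E_-)u(1)$ and the two integral pieces, and estimate each against the definition of $\mathcal{D}^\gamma(U)$ and $C^{1+\gamma}(U)$. The homogeneous piece is explicit — a polynomial or a power $t^{-b_-}$ in the off-diagonal entry of $Y$ — and one checks by inspection that, because we have projected onto $I-E_-$ (the non-growing spectral subspace), the components behave like $1$ or $F(t^{-b_-})$, which lie in the local spaces precisely under the transversality hypothesis $b_->0$ (resp. $b_-=0$ in the degenerate quadratic case, resp. trivially for the tangent Kimura case where $A_-$ is nilpotent). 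For the integral terms, the key point is that the relation \eqref{11}, namely $E_-u(1)=I_1f$, is exactly what is needed to rewrite $\int_t^1 Y(t)Y^{-1}(s)f(s)ds$ as a sum of $\int_0^t Y(t)E_-Y^{-1}(s)f(s)ds$ and $\int_t^1 Y(t)(I-E_-)Y^{-1}(s)f(s)ds$, i.e. to split the integral at the endpoint so that the $E_-$-part is integrated *from* the boundary (where $Y^{-1}(s)f(s)$ is integrable against the decaying weight) and the $(I-E_-)$-part *to* the interior; this is the algebraic identity already built into \eqref{u7}. Once in this form, each term is bounded using that $f\in C^{1+\gamma}\times\mathcal{D}^\gamma$: the $\mathcal{D}^\gamma$ norm controls precisely the integral $\sup_{z_1\le z_2}|\int_{z_1}^{z_2}f_1|$ that appears, and the extra hypothesis that $s(s-3)f_1(s)$ vanishes at Kimura tangent endpoints is what forces $x\partial_x^2 u_0\to 0$ there rather than merely staying bounded. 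Differentiating the integral representations once and twice and matching against the three local-norm conditions in the definition of $C^{2+\gamma}(U)$ then finishes it; $u_1=u_0'-f_1$ inherits $C^{1+\gamma}(U)$ regularity automatically, and in the degenerate quadratic case one can instead use the explicit formulas \eqref{u_0,1} together with Remark \ref{int:neutral} to get integrability up to $-\infty$.

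The main obstacle I expect is the bookkeeping at the Kimura transverse endpoint: there $Y(t)$ and $Y^{-1}(t)$ both contain the antiderivative $F(t^{-b_-})$, which is genuinely singular as $t\to 0$ when $0<b_-<1$, and one has to check that the singular contributions from the homogeneous part and from the two integral parts conspire to cancel — this cancellation is the whole reason \eqref{u7} is written in that particular projected form, and verifying it amounts to tracking the $x^{-c_0}$-type growth exactly as in the scale-function computation in the proof of Theorem \ref{maximum_prin}. The other delicate point is verifying the vanishing (not just boundedness) of $x\partial_x u_0$ and $x\partial_x^2 u_0$ at a tangent Kimura endpoint, where $b_-=0$ and $Y$ is merely $\begin{pmatrix}1&t-1\\0&1\end{pmatrix}$: here one genuinely needs the hypothesis on $s(s-3)f_1(s)$, and the argument is a short dominated-convergence estimate on the integral terms. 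Everything else is a direct transcription of the $C^{2+\gamma}$ estimates in \cite{epstein2013degenerate} to the present notation.
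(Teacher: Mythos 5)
Your plan matches the paper's proof: in both the transverse and tangent Kimura cases one writes out \eqref{u7} explicitly using the triangular fundamental matrix, differentiates directly to get $u_0'$, $u_0''$, $u_1'$ as $f$ plus terms like $t^{-(b_-+1)}\int_0^t s^{b_-}f_1(s)\,ds$, and checks membership in the local spaces, with the hypothesis on $s(s-3)f_1(s)$ used exactly where you place it (integrability of $u_1$ up to the tangent Kimura endpoint). The only stray remark is your appeal to the degenerate quadratic $b_\pm=0$ case and Remark \ref{int:neutral}, which belongs to Lemma \ref{check1} (the intervals $U=[0,1]$, $U=[2,3]$ in Lemma \ref{check2} are the Kimura endpoints); this does not affect correctness.
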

				\begin{proof}
					At a transverse Kimura point, we write $u(t)$ for $t<1$ as
					\begin{gather}
						u(t)=\left(\begin{array}{cc}
							1 & 0\\
							0 & 0
						\end{array}\right)u(1)+ \int_0^t\left(\begin{array}{cc}
							0 & s^{b_-}G(t)\\
							0 & t^{-{b_-}}s^{b_-}
						\end{array}\right)f(s)ds-\int_t^1\left(\begin{array}{cc}
							1 & s^{b_-}G(s)\\
							0 & 0
						\end{array}\right)f(s)ds,
					\end{gather}
					where $G(t)=F(t^{-{b_-}})-F(1)$.
					By direct computation we have
					\begin{gather*}
						u_1'(t)=f_1(t)-at^{-({b_-}+1)}\int_0^ts^{b_-}f_1(s)ds,\\
						u_0'(t)=t^{-{b_-}}\int_0^ts^{{b_-}}f_1(s)ds+f_0(t), u_0''(t)=f_1(t)-{b_-}t^{-({b_-}+1)}\int_0^ts^{b_-}f_1(s)ds+f_0'(t).
					\end{gather*}
					Thus $u_0'', u_1'\in\mathcal{D}^\gamma([0,1])$ and by definition
					$u\in C^{2+\gamma}(U)\times C^{1+\gamma}(U)$ and solves $\frac{du}{dt}+Au=f$.

					At a tangent Kimura point, for $t\in [0,1]$, we write $u(t)$ for $t<1$ as:
					\begin{gather}
						u_0(t)=u_0(1)+(t-1)u_1(1)-\int_t^1[f_0(s)+(t-s)f_1(s)]ds,\\
						u_1(t)=u_1(1)-\int_t^1f_1(s)ds.
					\end{gather}
					$u_1$ is integrable up to $t=0$ iff $\underset{s\to 0}{\lim}\ sf_1(s)=0$.
					In such a case, $u_1'(t)=f_1(t)\in\mathcal{D}^\gamma([0,1])$ and so $u_1\in C^{1+\gamma}([0,1])$. Since
					\begin{gather*}
						u_0'(t)=u_1(1)+f_0(t)-\int^1_tf_1(s)ds,\ u_0''(t)=f_0'(t)+f_1(t),
					\end{gather*}
					$u\in C^{2+\gamma}(U)\times C^{1+\gamma}(U)$ and solves $\frac{du}{dt}+Au=f$. For $t\in [2,3]$, the proof is essentially the same.
				\end{proof}

				\subsubsection{Proof of Theorem \ref{thm 3.1}}
				\begin{lemma}
					$dim\ \text{ker}(\widetilde{M})=dim\ [\text{ker}(E_-)\cap\Lambda^{-1}\text{ker}(E_+)]$.
				\end{lemma}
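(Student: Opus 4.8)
The plan is to exhibit a linear isomorphism between $\ker(\widetilde{M})$ and $\ker(E_-)\cap\Lambda^{-1}\ker(E_+)$, namely the evaluation map $u\mapsto u(1)$, which will give the claimed equality of dimensions at once. First I would note that any $u\in\ker(\widetilde{M})$ solves the homogeneous system $\frac{du}{dt}+A(t)u=0$ on $[0,3]$, so by uniqueness for linear ODEs it is completely determined by the single value $u(1)\in\mathbb{R}^2$; concretely it is the $f=0$ specialization of \eqref{solution_u1}–\eqref{solution_u2}, that is $u(t)=Y(t)u(1)$ for $t\le 1$ and $u(t)=Y(t)\Lambda u(1)$ for $t\ge 2$. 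In particular $u(1)=0$ forces $u\equiv 0$, so the evaluation map is injective on $\ker(\widetilde{M})$.

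Next I would pin down the image of this map. Specializing the compatibility relations \eqref{11} (equivalently \eqref{e1}–\eqref{e2} in the quadratic case and \eqref{relation_1}–\eqref{relation_2} in the Kimura case) to $f=0$ gives $E_-u(1)=0$ and $E_+\Lambda u(1)=0$, i.e. $u(1)\in\ker(E_-)\cap\Lambda^{-1}\ker(E_+)$. These are precisely the conditions that keep $Y(t)u(1)$ and $Y(t)\Lambda u(1)$ inside the local weighted Hölder spaces at the two endpoints in every boundary regime treated in Section 3.3.3 — quadratic with $b_\pm\neq 0$, quadratic with $b_\pm=0$, transverse Kimura, and tangent Kimura. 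In the tangent Kimura case one has $E_\pm=0$, so $\ker(E_\pm)=\mathbb{R}^2$ and the relation is vacuous, while the extra requirement that $s(s-3)f_1(s)$ vanish at tangent Kimura endpoints is automatically met since $f_1\equiv 0$. Hence the evaluation map sends $\ker(\widetilde{M})$ into $\ker(E_-)\cap\Lambda^{-1}\ker(E_+)$.

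For surjectivity onto $\ker(E_-)\cap\Lambda^{-1}\ker(E_+)$, I would invoke Lemma \ref{check1} and Lemma \ref{check2} with $f=0$: given any $u(1)$ in that subspace, the function $u(t)$ reconstructed from it via \eqref{u1}–\eqref{u2} (quadratic endpoints) and \eqref{u7}–\eqref{u8} (Kimura endpoints) lies in $C^{2+\gamma}([0,3])\times C^{1+\gamma}([0,3])$ and solves $\frac{du}{dt}+Au=0$, so it belongs to $\ker(\widetilde{M})$ and evaluates back to the prescribed $u(1)$. Combining injectivity with this surjectivity proves the lemma. The only place requiring care is matching the homogeneous specialization to each of the four endpoint cases of Section 3.3.3 and of Lemmas \ref{check1}–\ref{check2}; but since all of that machinery is already established, the argument here is essentially bookkeeping rather than new analysis.
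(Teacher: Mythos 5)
Your proposal is correct and follows essentially the same route as the paper: the evaluation map $u\mapsto u(1)$ is shown to be a bijection from $\ker(\widetilde{M})$ onto $\ker(E_-)\cap\Lambda^{-1}\ker(E_+)$, with injectivity from ODE uniqueness, the inclusion of the image from the $f=0$ specialization of the compatibility relations, and surjectivity from reconstructing the homogeneous solution (your citation of Lemmas \ref{check1}--\ref{check2} with $f=0$ is just a slightly more explicit version of the paper's direct verification). No gaps.
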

				\begin{proof}
					If $\widetilde{M}u=0$, then $\Lambda u(1)\in \text{ker}(E_+)$ and similarly $u(1)\in \text{ker}(E_-)$. On the other hand, we set $u(1)\in \text{ker}(E_{-})\cap\Lambda^{-1}\text{ker}(E_+)$, and define $u$ at the quadratic boundary point by
					\begin{gather*}
						\text{for}\ t\leq 1: u(t)=e^{-(t-1)A_{-}}u(1),\quad  \text{for}\ t\geq 2: u(t)=e^{-(t-2)A_{+}}\Lambda u(1)
					\end{gather*}
					and at the Kimura endpoint by 
					\begin{align*}
						\text{for}\ t\leq 1: u(t)=\Phi(t)u(1),\quad  \text{for}\ t\geq 2: u(t)=\Phi(t)\Lambda u(1)
					\end{align*}
					and $u(t)$ is uniquely characterized on $[1,2]$ by $u(1)$. Then, $u\in C^{2+\gamma}([0,3]\times C^{1+\gamma}([0,3])$ and solves $\widetilde{M}u=0$. When such $u$ exists, it is uniquely determined by $u(1)$. We thus proved that the map 
					\begin{align*}
						N: \text{ker}(\widetilde{M})\longrightarrow \text{ker}(E_{-})\cap\Lambda^{-1}\text{ker}(E_{+})
					\end{align*}
					by assigning $u$ to $u(1)$ is a bijection. Therefore $\text{dim}\ \text{ker}(\widetilde{M})=\text{dim}\ [\text{ker}(E_{-})\cap\Lambda^{-1}\text{ker}(E_{+})]$.
				\end{proof}
				\begin{lemma}\label{lm 3.4}
					The condition \eqref{11} is equivalent to 
					\begin{gather}\label{12}
						\Lambda I_1f+I_2f\in \text{ker}(E_{+})+\Lambda\cdot \text{ker}(E_{-}).
					\end{gather}
				\end{lemma}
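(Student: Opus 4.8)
The plan is to prove the equivalence purely by finite-dimensional linear algebra in $\mathbb{R}^2$, using only that $E_-$ and $E_+$ are idempotent, that $\Lambda$ is invertible, and two structural facts about $I_1f$ and $I_2f$. Namely, since $E_\pm$ are projectors and appear as the leftmost factor inside the integrands defining $I_1f$ and $I_2f$, one has $E_-(I_1f)=I_1f$ and $E_+(I_2f)=I_2f$; equivalently, $I_1f\in\text{ran}(E_-)$ and $I_2f\in\text{ran}(E_+)$. I would record this observation first, as it is the only place the specific form of $I_1,I_2$ enters.

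For the forward direction, suppose some $u(1)\in\mathbb{R}^2$ satisfies \eqref{11}. Decompose $u(1)=E_-u(1)+(I-E_-)u(1)=I_1f+v$ with $v:=(I-E_-)u(1)\in\text{ker}(E_-)$, and decompose $\Lambda u(1)=E_+\Lambda u(1)+(I-E_+)\Lambda u(1)=-I_2f+w$ with $w:=(I-E_+)\Lambda u(1)\in\text{ker}(E_+)$. Applying $\Lambda$ to the first identity and subtracting the second gives $\Lambda I_1f+I_2f=w-\Lambda v\in\text{ker}(E_+)+\Lambda\cdot\text{ker}(E_-)$, which is \eqref{12}.

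For the converse, given \eqref{12}, write $\Lambda I_1f+I_2f=w+\Lambda v$ with $w\in\text{ker}(E_+)$ and $v\in\text{ker}(E_-)$, and set $u(1):=I_1f-v$. Then $E_-u(1)=E_-(I_1f)-E_-v=I_1f-0=I_1f$ by the structural fact and $v\in\text{ker}(E_-)$; and $\Lambda u(1)=\Lambda I_1f-\Lambda v=(w+\Lambda v-I_2f)-\Lambda v=w-I_2f$, so $E_+\Lambda u(1)=E_+w-E_+(I_2f)=0-I_2f=-I_2f$. Hence \eqref{11} holds with this choice of $u(1)$, completing the equivalence.

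I do not anticipate a genuine obstacle: the computation is symmetric and self-contained, and it applies uniformly across all the boundary cases (Kimura transverse/tangent, quadratic with $b_\pm\neq 0$ or $b_\pm=0$), since the matrices $E_\pm$ and $\Lambda$ were defined case by case but the lemma uses only their abstract properties. The one point requiring care is simply bookkeeping which projector's range or kernel each of $I_1f$, $I_2f$, $v$, $w$ belongs to, and keeping track of the factor $\Lambda$ when passing between the constraint at $t=1$ and the constraint at $t=2$.
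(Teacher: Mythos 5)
Your proposal is correct and follows essentially the same route as the paper: both directions rest on the idempotence of $E_\pm$, the invertibility of $\Lambda$, and the facts $E_-(I_1f)=I_1f$, $E_+(I_2f)=I_2f$, with the converse handled by the same explicit choice $u(1)=I_1f-v$ (the paper writes the kernel elements as $(I-E_\pm)v_i$, which is the same thing since $\ker E_\pm=\operatorname{ran}(I-E_\pm)$ for projectors). Your version is slightly more careful in that it records the structural fact $I_1f\in\operatorname{ran}(E_-)$, $I_2f\in\operatorname{ran}(E_+)$ explicitly, which the paper uses only implicitly.
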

				\begin{proof}
					Suppose \eqref{11} is satisfied, then for some $v_1, v_2\in\mathbb{R}^2$ we have
					\begin{gather*}
						u(1)=I_1f+(I-E_{-})v_2,\quad \Lambda u(1)=-I_2(f)+(I-E_+)v_1.
					\end{gather*}
					Multiplying the first equality by $\Lambda$ and subtracting the second equality, we obtain \eqref{12}.
					
					Conversely, suppose that \eqref{12} is true. Then, for some $v_1, v_2\in\mathbb{R}^2$,
					\begin{gather*}
						I_2f+\Lambda  I_1f=-\Lambda(I-E_{-})v_{2}-(I-E_+)v_{1}.
					\end{gather*}
					Define $u(1)=I_1f+(I-E_{-})v_{2}$. Using that $E^2_\pm=E_\pm,\  E_\pm(I-E_\pm)=0$, we verify that \eqref{11} is satisfied.
				\end{proof}
				
				We define the linear map
				\begin{gather*}
					\Phi: C^{1+\gamma}([0,3])\times {\mathcal{D}^\gamma([0,3])}\longrightarrow \frac{\mathbb{R}^2}{\text{ker}(E_{+})+\Lambda\cdot \text{ker}(E_{-})}. 
				\end{gather*}
				by assigning $f$ to the coset $[\Lambda I_1f+I_2f]$.
				
				$\text{ran}(\widetilde{M})\subset \text{ker}(\Phi)$ is already known and moreover \[\text{ran}(\widetilde{M})\subset \text{ker}(\Phi)\cap \{(f_1, f_2):s(s-3)f_2(s)\ \text{vanishes at Kimura tangent points.}\}\] 
				Indeed we show below that they are equal and thus obtain 
				\begin{lemma}\label{phi}
					We have
					\begin{enumerate}
						\item $\text{dim}\ \text{ker}(\Phi)/\text{ran}(\widetilde{M})=|\text{Kimura tangent points}|$,
						\item $\Phi$ is surjective.
					\end{enumerate}
				\end{lemma}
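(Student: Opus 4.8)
The plan is to first identify $\text{ran}(\widetilde{M})$ exactly, and then reduce both assertions to linear algebra on $\mathbb{R}^{2}$. Write $K\subset C^{1+\gamma}([0,3])\times\mathcal{D}^{\gamma}([0,3])$ for the subspace of those $f=(f_{0},f_{1})$ such that $s(s-3)f_{1}(s)$ vanishes at every Kimura tangent endpoint. By the definition of $\mathcal{D}^{\gamma}$ at a Kimura boundary, $sf_{1}(s)$ (resp.\ $(s-3)f_{1}(s)$) extends continuously to $s=0$ (resp.\ $s=3$), so the functionals $\ell_{p}(f):=\lim_{s\to p}s(s-3)f_{1}(s)$, indexed by the Kimura tangent endpoints $p\in\{0,3\}$, are well defined, bounded, linearly independent, and have common kernel $K$; hence $\text{codim}\,K=|\text{Kimura tangent points}|$. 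The first step is to prove that $\text{ran}(\widetilde{M})=\text{ker}(\Phi)\cap K$. The inclusion ``$\subseteq$'' was already observed. For ``$\supseteq$'', take $f\in\text{ker}(\Phi)\cap K$; then $\Phi(f)=0$ is exactly the condition \eqref{12}, which by Lemma \ref{lm 3.4} is equivalent to the existence of $u(1)\in\mathbb{R}^{2}$ satisfying \eqref{11}. Fixing such a $u(1)$, one defines $u(t)$ near the endpoints by \eqref{u1}--\eqref{u2} (quadratic endpoints) and \eqref{u7}--\eqref{u8} (Kimura endpoints), and extends $u$ across $[1,2]$ as the unique solution of the linear ODE $u'+Au=f$ with initial value $u(1)$; Lemmas \ref{check1} and \ref{check2} (the latter using precisely that $f\in K$) then give $u\in C^{2+\gamma}([0,3])\times C^{1+\gamma}([0,3])$ with $\widetilde{M}u=f$.

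Given this identity, assertion (1) follows from a short quotient argument. The first isomorphism theorem applied to $\ell:=(\ell_{p})_{p}$ restricted to $\text{ker}(\Phi)$ yields $\text{ker}(\Phi)/(\text{ker}(\Phi)\cap K)\cong\ell(\text{ker}(\Phi))$, so it suffices to check that $\ell|_{\text{ker}(\Phi)}$ is onto $\mathbb{R}^{|\text{Kimura tangent points}|}$. The key point is that at a Kimura tangent endpoint one has $E_{\pm}=0$: if $0$ is a Kimura tangent endpoint then $E_{-}=0$, so $I_{1}\equiv0$ and $\Phi(f)=[I_{2}f]$ depends only on $f|_{[1,3]}$; if $3$ is a Kimura tangent endpoint then $E_{+}=0$, so $I_{2}\equiv0$ and $\Phi(f)=[\Lambda I_{1}f]$ depends only on $f|_{[0,1]}$. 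Hence one may modify $f_{1}$ by a bump supported in a small neighborhood of a Kimura tangent endpoint --- for instance $\varphi(s)/s$ near $s=0$ with $\varphi$ a cutoff $\equiv1$ near $0$, which lies in $\mathcal{D}^{\gamma}$ since $s\cdot\varphi(s)/s=\varphi(s)$ is smooth --- without altering $\Phi(f)$, while prescribing $\ell_{p}(f)$ arbitrarily and leaving the other $\ell_{p'}(f)$ untouched. Starting from $f=0\in\text{ker}(\Phi)$ this realizes any target vector, so $\ell|_{\text{ker}(\Phi)}$ is onto and $\text{dim}\,\text{ker}(\Phi)/\text{ran}(\widetilde{M})=|\text{Kimura tangent points}|$. (When both endpoints are Kimura tangent, $E_{-}=E_{+}=0$, hence $I_{1}=I_{2}=0$, $\Phi\equiv0$, $\text{ker}(\Phi)$ is the whole space and $\ell$ is visibly onto $\mathbb{R}^{2}$ --- a consistency check.)

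For assertion (2) I would show that $f\mapsto\Lambda I_{1}f+I_{2}f$ already surjects onto $\mathbb{R}^{2}$ modulo $\text{ker}(E_{+})+\Lambda\,\text{ker}(E_{-})$. Since $Y(s)$ is invertible, letting $f$ range over functions supported in a small left neighborhood of $s=1$ makes $I_{1}f=\int_{l}^{1}E_{-}Y(s)^{-1}f(s)\,ds$ range over all of $\text{ran}(E_{-})$: localizing near a point $s_{0}$ and varying the two components of $f$ produces every vector of $\text{ran}(E_{-}Y(s_{0})^{-1})=\text{ran}(E_{-})$. Likewise, functions $f$ supported in a small right neighborhood of $s=2$ make $I_{2}f=E_{+}\bigl(\Lambda\int_{1}^{2}Y^{-1}f+\int_{2}^{r}Y^{-1}f\bigr)$ range over all of $\text{ran}(E_{+})$. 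These neighborhoods are disjoint, so $f$ can realize both contributions independently; hence the image of $f\mapsto\Lambda I_{1}f+I_{2}f$ contains $\Lambda\,\text{ran}(E_{-})+\text{ran}(E_{+})\supseteq\text{ran}(E_{+})$. As $E_{+}$ is a projector, $\text{ran}(E_{+})+\text{ker}(E_{+})=\mathbb{R}^{2}$, so the composite $\text{ran}(E_{+})\hookrightarrow\mathbb{R}^{2}\twoheadrightarrow\mathbb{R}^{2}/(\text{ker}(E_{+})+\Lambda\,\text{ker}(E_{-}))$ is onto, and therefore $\Phi$ is surjective.

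The step I expect to be the main obstacle is the first one --- the identity $\text{ran}(\widetilde{M})=\text{ker}(\Phi)\cap K$: assembling the representation formulas \eqref{u1}--\eqref{u2}, \eqref{u7}--\eqref{u8} together with Lemmas \ref{lm 3.4}, \ref{check1}, \ref{check2} into a clean ``if and only if'', verifying the gluing of the endpoint formulas to the interior ODE solution on $[1,2]$, and, most delicately, keeping track of the degenerate configurations in which $E_{\pm}$, $I_{j}$, or even $\Phi$ itself vanish at Kimura tangent endpoints. Once that is in place, the codimension count for $K$, the isomorphism-theorem step, and the two range identities $\text{ran}(I_{1})=\text{ran}(E_{-})$, $\text{ran}(I_{2})=\text{ran}(E_{+})$ are routine.
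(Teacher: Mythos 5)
Your proposal is correct, and its core — the identity $\text{ran}(\widetilde{M})=\text{ker}(\Phi)\cap K$ obtained from Lemma \ref{lm 3.4} together with Lemmas \ref{check1} and \ref{check2} — is exactly the paper's argument for \eqref{range_m}. Two points differ in execution. First, for the codimension count you make explicit what the paper leaves terse: you introduce the bounded functionals $\ell_p(f)=\lim_{s\to p}s(s-3)f_1(s)$, apply the first isomorphism theorem to $\ell|_{\text{ker}(\Phi)}$, and verify surjectivity by adding bumps of the form $\varphi(s)/s$ near a Kimura tangent endpoint, using that $E_\pm=0$ there so that $\Phi$ is insensitive to such modifications; the paper simply reads the count off \eqref{range_m} (using that $\text{ker}(\Phi)$ is the whole space once a Kimura tangent point is present), so your version supplies the independence/surjectivity details that justify that reading. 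Second, for surjectivity of $\Phi$ you take a genuinely different and arguably cleaner route: instead of the paper's choice of a discontinuous profile $f=-gA_+x$ on $t>1$, its continuous approximations $f_k$, and the closedness of $\text{ran}(\Phi)$ in the finite-dimensional quotient, you localize test functions near $s=1^-$ and $s=2^+$ (where $Y$ is smooth and invertible) to show $I_1$ and $I_2$ fill $\text{ran}(E_-)$ and $\text{ran}(E_+)$ respectively, and then conclude from the projector decomposition $\text{ran}(E_+)+\text{ker}(E_+)=\mathbb{R}^2$ that already $[\text{ran}(E_+)]$ exhausts the quotient $\mathbb{R}^2/(\text{ker}(E_+)+\Lambda\,\text{ker}(E_-))$. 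This avoids any limiting argument and handles the degenerate case $E_\pm=0$ (target space $\{0\}$) uniformly; the paper's approach, by contrast, exhibits an almost-explicit preimage of a prescribed coset. Both are valid, and your treatment of the gluing on $[1,2]$ and of the membership of the bump functions in $C^{1+\gamma}\times\mathcal{D}^\gamma$ is consistent with the paper's function spaces.
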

				\begin{proof} 
					We show that
					\begin{gather}\label{range_m}
						\text{ker}(\Phi)\cap \{(f_1, f_2):s(s-3)f_2(s)\ \text{vanishes at Kimura tangent points}\}=\text{ran}(\widetilde{M}).
					\end{gather}
					
					First we assume that there is no Kimura tangent endpoint. Given $f\in \text{ker}(\Phi)$, then for some $u(1)\in\mathbb{R}^{2}$, $E_{+}\Lambda u(1)=-I_1f,\ E_-u(1)=I_2f$. Define $u$ by \eqref{u7}, \eqref{u8}. Then by Lemmas \ref{check1}, \ref{check2}, $u\in C^{2+\gamma}([0,3])\times C^{1+\gamma}([0,3])$ solves $\widetilde{M}u=f$, so that $f\in \text{ran}(\widetilde{M})$, and hence $\text{ran}(\widetilde{M})=\text{ker}(\Phi)$.
					
					If there is at least one Kimura tangent point, then $\text{ker}(E_{+})+\Lambda \text{ker}(E_{-})=\mathbb{R}^2$, so $\text{ker}(\Phi)$ is the whole space $C^{1+\gamma}([0,3])\times {\mathcal{D}^\gamma([0,3])}$. For $f$ in the left hand space of \eqref{range_m}, we define $u$ by \eqref{u7}, \eqref{u8}. Again by Lemmas \ref{check1}, \ref{check2}, such $u$ is the solution of $\widetilde{M}u=f$, and hence we proved \eqref{range_m} in this case.
					
					Next we prove that $\Phi$ is surjective. If there is at least one Kimura tangent endpoint, then $\text{ker}(E_{+})+\Lambda \text{ker}(E_{-})=\mathbb{R}^2$, so the target space of $\Phi$ is $\{0\}$ and $\Phi$ is surjective.
					
					Now we assume that there is no Kimura tangent point. Pick $x\in\mathbb{R}^{2}$. We need to find $f\in C^{1+\gamma}([0,3])\times\mathcal{D}^{\gamma}([0,3])$
					such that
					\begin{gather*}
						\Lambda I_1f+I_{2}f-x\in \text{ker}(E_{+})+\Lambda\cdot\text{ker}(E_{-}).
					\end{gather*}
					Recall that 
					\begin{gather*}
						I_{1}f=E_{+}\Lambda\int_{-1}^{1}Y(s)^{-1}f(s)ds+ \int_{1}^{\infty}e^{(s-1)A_{+}}E_{+}f(s)ds,\\
						I_{2}f=\int_{-\infty}^{-1}e^{(s+1)A_{-}}E_{-}f(s)ds.
					\end{gather*}
					We first take
					\begin{gather*}
						f(t)=0\ \forall\ t<1,\qquad  f(t)=-gA_{+}x\ \forall t>1.
					\end{gather*}
					We could choose an appropriate function $g\in L^{1}([1,\infty)\cap\mathcal{C}_{b}^{0}(\mathbb{R})$ so that
					\begin{gather*}
						I_{1}(f)+\Lambda I_{2}f-x=-(I-E_{+})x
					\end{gather*}
					and therefore $\Phi(f)=[x]$. However $f$ would not be continuous. Thus modifying this idea we set
					\begin{gather*}
						f_{k}(t)=0\ \text{for}\ t\leq 1,\\
						f_{k}(t)=k(1-t)gA_{+}x\ \text{for}\ 1\leq t\leq 1+\frac{1}{k},\\
						f_{k}(t)=-gA_{+}x\ \text{for}\ t\geq 1+\frac{1}{k}.
					\end{gather*}
					We compute
					\begin{gather*}
						I_{1}(f)+\Lambda I_{2}f-x=-(I-E_{+})x+x_{k}
					\end{gather*}
					where $x_{k}\to 0\ \text{as}\ k\to\infty$. Since the projection map is continuous we have $\Phi(f_{k})\to[x]$ and since the image of $\Phi$ is closed we conclude that $[x]\in \text{ran}(\Phi)$. We construct $f$ similarly in all other cases.
				\end{proof}
				
				From Lemma 3.7 we have
				\begin{gather*}
					\text{codim}\ \text{ran}(\widetilde{M})=\text{dim}\ \text{ker}(\Phi)/\text{ran}(\widetilde{M})+\text{dim\ ran}\ \Phi\\
					=|\text{kimura tangent endpoints}|+2-\text{dim}(\text{ker}(E_{+})+\Lambda \text{ker}(E_-)).
				\end{gather*}
				so that
				\begin{align*}
					\text{ind}(L_\gamma)=&\text{ind}(M)=\text{ind}(\widetilde{M})\\
					=&\text{dim}[\text{ker}(E_{-})\cap\Lambda^{-1}\text{ker}(E_{+})]-[2-\text{dim}(\text{ker}(E_{+})\\
					&+\Lambda \text{ker}(E_-))]-|\text{kimura tangent endpoints}|\\
					=&\text{dim}\ \text{ker}(E_{+})+\text{dim}\ \text{ker}(E_{-})-2-|\text{Kimura tangent endpoints}|.
				\end{align*}
				From the definitions of $E_\pm$ we conclude that
				\begin{gather*}
					\text{dim}\ \text{ker}(E_{+})+\text{dim}\ \text{ker}(E_{-})=1+1+|\text{tangent endpoints}|,
				\end{gather*}
				and hence
				\begin{gather*}
					\text{ind}(L_{\gamma})=1+1+|\text{tangent endpoints}|-2-|\text{tangent kimura endpoints}|\\
					= |\text{tangent quadratic endpoints}|\qquad 
					=\ \kappa^{+}+\kappa^{-}.
				\end{gather*}

				\section{Exponential convergence to invariant measures}\label{sec:cv1d}
				In this section we study the convergence rate to the invariant measure in two different cases. 
				First, when there is at least one tangent boundary, then the invariant measure is Dirac measure(s) at the tangent boundary(ies) and quadratic endpoints (if any). To estimate the convergence rate we use Lyapunov functions constructed in Section \ref{sec:lam0estimate}. These Lyapunov functions display different asymptotic  behaviors at different boundary endpoints, and turn out to lie in the function spaces $C(\alpha,\beta)$ (see Definition \ref{c(alpha,beta)}). We show that the growth bound of the generator of $\mathcal{Q}_t$ on $C(\alpha,\beta)$ is not larger than the Lyapunov function rate \eqref{lypunov_rate}, and thus negative. From this, we conclude that the transition probability converges exponentially to the corresponding invariant measure in a Wasserstein distance. The main results are summarized in Theorem \ref{wasser_tant}.

				In the second case where both boundary points are transverse, the invariant measures include a unique invariant measure $\mu$ supported on the whole domain $[0,1]$ and Dirac measures at quadratic endpoints (if any). Motivated by underdamped Langevin dynamics, we wish to prove and then apply a Poincar\'e inequality in $L^2(\mu)$ to show exponential convergence. 
				Using tools developed in \cite{bakry2008rate}, we obtain a (W)-Lyapunov-Poincar\'e inequality; see Definition \ref{def:w-lyap}. The main tools we use are the local Lyapunov function, which is Lyapunov except on a small subset $\mathcal{C}$ and a local Poincar\'e inequality on $\mathcal{C}^c$; see Definition \ref{local_normal_coordinate}. Finally we apply the (W)-Lyapunov-Poincar\'e inequality to show that $\mu$ satisfies the Poincar\'e inequality and for any probability measure $v=h\mu$ with $h\in L^2(\mu)$, $\mathcal{Q}^*_tv$ converges exponentially to $\mu$ in total variation distance. The main results are summarized in Theorem \ref{q_t:l^2}.

				\subsection{Case of one/two tangent boundary points}
				\subsubsection{Function Space $C(\alpha,\beta)$}
				When there is at least one tangent boundary point, we consider a function space isometrically isomorphic to $C^0([0,1])$:
				\begin{definition}\label{c(alpha,beta)}
					For $\alpha, \beta\in\mathbb{R}$, we define the  function space 
					\begin{align*}C(\alpha,\beta):=x^\alpha (1-x)^\beta C^0([0,1])\end{align*}
					equipped with norm
					\begin{gather*}
						||f||_{\alpha,\beta}:=\left|\left|\frac{f}{x^\alpha(1-x)^\beta}\right|\right|_{C^0}.
					\end{gather*}
				\end{definition}
				We will specify $\alpha, \beta$ in different cases later (see Sec.\ref{sec:lam0estimate} for more detail) but list their ranges as: 
				\begin{align*}
					\alpha, \beta=\begin{cases}
						\begin{array}{cc}
							\in(0,1) & \text{Kimura/quadratic tangent}\\
							0 & \text{Kimura transverse}\\
							<0 & \text{quadratic transverse.}
					\end{array}\end{cases}
				\end{align*}
				
				\begin{remark}
					The solution operator $\mathcal{Q}_t$ constructed in Theorem \ref{semigroup} is also the solution operator of the Cauchy problem on $C(\alpha, \beta)$. 
					To see this, the two local operators $\widetilde{Q}^0_t, \widetilde{Q}^1_t$ naturally act on $C(\alpha, \beta)$ and so does the perturbation operator. 
				\end{remark}
				
				We let $A$ be the generator of $\mathcal{Q}_t$ on $C(\alpha,\beta)$. We use the notation $f(x_0)\sim 0$ in $C(\alpha, \beta)$ if $\frac{f}{x^\alpha(1-x)^\beta}(x_0)=0$ and $f\succ 0 (\succeq 0)$ in $C(\alpha,\beta)$ if $\frac{f}{x^\alpha(1-x)^\beta}$ is strictly positive (nonnegative). 
				

				\begin{lemma}[Positive Minimum Principle]\label{positive minimum principle}
					For $\lambda\in\mathbb{R}$, the operator $B:=A-\lambda$ satisfies the positive minimum principle on $C(\alpha, \beta)$, i.e.
					\begin{gather*}
						\text{for every $0\preceq f\in D(A)$ and $x\in [0,1]$, $f(x)\sim 0$ implies $(Bf)(x)\succeq 0$}.
					\end{gather*}
				\end{lemma}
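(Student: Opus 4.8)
The plan is to reduce the statement to the classical fact that the generator of a positivity-preserving $C_0$-semigroup satisfies the positive minimum principle, after recording that on $C(\alpha,\beta)$ the order $\succeq$ is nothing but pointwise nonnegativity after division by the weight. Write $w(x):=x^\alpha(1-x)^\beta$, which is strictly positive on $(0,1)$. First I would observe that multiplication by $w$ is an order isomorphism from $C^0([0,1])$ with its usual pointwise order onto $C(\alpha,\beta)$ with the order $\succeq$; in particular, for $g\in C(\alpha,\beta)$ the function $g/w$ is genuinely continuous on $[0,1]$, and $g\succeq 0$ means exactly that $g/w\ge 0$ on $[0,1]$. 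Next I would note that $\mathcal{Q}_t$ is positivity preserving on $C(\alpha,\beta)$: by the Remark preceding the Lemma, $\mathcal{Q}_t$ is the solution operator on $C(\alpha,\beta)$ assembled in the proof of Theorem \ref{semigroup} from the local operators $\widetilde Q^0_t,\widetilde Q^1_t$, the cutoff multiplications, and the Neumann-series correction, each of which preserves positivity on the weighted space via the intertwining just described; hence $0\preceq f$ implies $0\preceq \mathcal{Q}_tf$ for all $t\ge 0$.

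With these preliminaries in place the argument is short. Since $f(x)\sim 0$ means $(f/w)(x)=0$, we also have $(\lambda f/w)(x)=0$, so the value of $Bf$ relative to $w$ at $x$ coincides with that of $Af$; it therefore suffices to show $(Af/w)(x)\ge 0$. For this I would use the definition of the generator $A$: $\|t^{-1}(\mathcal{Q}_tf-f)-Af\|_{\alpha,\beta}\to 0$ as $t\to 0^+$, and since $\|\cdot\|_{\alpha,\beta}=\|\cdot/w\|_{C^0}$, dividing by $w$ and evaluating at the single point $x$ gives
\[
\frac{(Af)(x)}{w(x)}=\lim_{t\to 0^+}\frac1t\left(\frac{(\mathcal{Q}_tf)(x)}{w(x)}-\frac{f(x)}{w(x)}\right)=\lim_{t\to 0^+}\frac1t\,\frac{(\mathcal{Q}_tf)(x)}{w(x)}\ge 0,
\]
where the second equality uses $(f/w)(x)=0$ and the inequality uses $\mathcal{Q}_tf\succeq 0$ together with $t>0$. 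Hence $(Bf)(x)/w(x)=(Af)(x)/w(x)\ge 0$, i.e. $(Bf)(x)\succeq 0$, which is the claim.

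The only step that requires more than a line is the verification that $\mathcal{Q}_t$ genuinely preserves the order $\succeq$ on the weighted space $C(\alpha,\beta)$, and I expect this to be the main (mild) obstacle. One checks that $\widetilde Q^0_t,\widetilde Q^1_t$ map $C(\alpha,\beta)$ positively into itself, that the cutoff multiplications $\psi_i,\chi$ preserve positivity, and that the error operator $E_t$ has norm small enough on the weighted space that the Neumann series $(\mathrm{Id}-E_t)^{-1}$ in the construction of Theorem \ref{semigroup} keeps $\mathcal{Q}_t$ positivity preserving for small $t$ — and hence for all $t$ by the semigroup property. Everything after that is the textbook positive-minimum-principle computation displayed above, and the $\lambda$-shift plays no role beyond the trivial observation that it vanishes at $x$.
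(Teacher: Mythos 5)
Your route is genuinely different from the paper's. The paper never invokes positivity of the semigroup on the weighted space at this stage: it identifies a core $D([0,1])$ of functions of the form $x^{\alpha^*}(1-x)^{\beta^*}g$ with $g$ twice differentiable in the appropriate degenerate sense, argues via \cite[A-I, Proposition 1.9]{arendt1986one} that it suffices to check the principle there, and then verifies $(Bf)(x_0)\succeq 0$ by a direct case-by-case computation with the differential expression $L$ at interior points, Kimura tangent, Kimura transverse, and quadratic endpoints. You instead run the classical abstract argument: since the order $\succeq$ on $C(\alpha,\beta)$ is pointwise order of $f/w$ with $w=x^\alpha(1-x)^\beta$, and generator convergence in $\|\cdot\|_{\alpha,\beta}$ is uniform convergence of the ratio, the difference-quotient computation at the point $x$ where $(f/w)(x)=0$ gives $(Af/w)(x)\ge 0$ as soon as $\mathcal{Q}_t$ preserves $\succeq$; the $\lambda$-shift is indeed irrelevant. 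This is correct and arguably cleaner than the paper's proof (it avoids both the core argument and the boundary case analysis), at the price of needing positivity of $\mathcal{Q}_t$ on $C(\alpha,\beta)$ as an input; the paper's computation, by contrast, extracts the sign information directly from the structure of $L$ at each boundary type.

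The one step you flag yourself is also the one place where your justification does not hold up: smallness of the error operator $E_t$ in the parametrix/Neumann-series construction of Theorem \ref{semigroup} does not imply that the corrected operator is positivity preserving --- small perturbations of positive operators need not be positive, and the commutator terms $[\psi_i,L]$ are not sign-definite, so ``positivity for small $t$ from the Neumann series'' is not a valid argument. The fix is easy and does not require revisiting the construction: Theorem \ref{semigroup} already states that $\mathcal{Q}_t$ is positivity preserving on $C^0([0,1])$, hence its kernel satisfies $q_t(x,y)\ge 0$; combined with the Remark preceding the Lemma (that $\mathcal{Q}_t$ acts on $C(\alpha,\beta)$, i.e.\ the kernel integrates the weight, which matters when $\alpha$ or $\beta$ is negative at a quadratic transverse endpoint), the representation $\mathcal{Q}_tf(x)=\int q_t(x,y)f(y)\,dy$ shows that $f\succeq 0$ (equivalently $f\ge 0$ on $(0,1)$) implies $\mathcal{Q}_tf\succeq 0$. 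With that substitution your proof is complete; note also that the paper itself relies on $\mathcal{Q}_t$ being a positive semigroup on the Banach lattice $C(\alpha,\beta)$ immediately afterwards (in the identity $s(A)=w_1$), so using it here creates no circularity.
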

				\begin{proof}
					We say $f\in C^0([0,1])$ is in $D^2([0,1])$ if $f$ is twice differentiable up to Kimura endpoint and if at quadratic endpoint if any, say $x=0$, then $x\partial_xf, x^2\partial_x^2f$ has a continuous limit $0$ at $x=0$. Consider the space
					\begin{gather*}
						D([0,1]):=x^{\alpha^*}(1-x)^{\beta^*}g,\ g\in D^2([0,1]),\\
						\text{where}\ \alpha^*=\alpha, \beta^*=\beta\ \text{if x=0,1 is quadratic and 0 if Kimura}.
					\end{gather*}
					Since $C^2([0,1])$ is dense in $C^0([0,1]$, then $D^2([0,1])\supset C^2([0,1])$ is also dense in $C^0([0,1]$, and hence $D([0,1])$ is dense in $C^{\alpha, \beta}([0,1])$. And for $t>0$, $\mathcal{Q}_t(D([0,1]))\subset D([0,1])$, so by \cite[A-I, Proposition 1.9]{arendt1986one}, $D([0,1])$ is a core for $B$. So it is sufficient to prove the result for $D([0,1])$.
					
					Assume $f\in D([0,1])$ and $f(x_0)=0$. If $x_0$ is an interior point, then $\partial_xf(x_0)=0, \partial^2_xf(x_0)\geq 0$, so $Bf(x_0)\geq 0$. If $x_0$ is Kimura tangent, since $f$ is twice differentiable at $x_0$, $Bf(x_0)=0$. If $x_0$ is Kimura transverse, in this case, $\partial_xf(x_0)\geq 0, x\partial_x^2f(x_0)=0$, and so $Bf(x_0)\geq 0$. 
					
					If $x_0$ is a quadratic endpoint, locally near $0$, $f=x^\alpha g$ for $g\in D^2([0,1])$. As a function $g(x)\geq 0, g(0)=0$ where $x\partial_xg, x^2\partial_x^2g$ have a continuous limit $0$ at $x=0$, so $\underset{x\to 0}{\lim}\frac{Af}{x^\alpha}=0$. Thus, $(Bf)(x_0)\sim 0$.
					
				\end{proof}

				To analyze the semigroup $Q_t$ generated by $A$, we first recall the \textbf{spectral bound $s$} and the \textbf{growth bound $w_1$} for $A$ defined as
				\begin{gather}
					s(A):= \sup\{\text{Re}\ \lambda: \lambda\in\sigma(A)\}\\
					\label{def:growthbound}
					w_1 :=\underset{w\in\mathbb{R}}{\inf}\{||Q_tx||\leq Me^{wt}||x||_{D(A)}, \forall x\in D(A), t\geq 0 \text{\ for suitable}\ M\}
				\end{gather}
				and let
				\begin{gather}\label{lypunov_rate}
					\lambda_{0}:=\underset{\lambda\in\mathbb{R}}{\inf}\ \{Af\leq\lambda f,\  0\prec f\}.
				\end{gather}
				$C(\alpha, \beta)$ equipped the norm $||\cdot||_{\alpha, \beta}$ makes it a Banach lattice as defined in \cite{arendt1986one}.
				A first result relating $s(A)$ and $w_1$ is that \[s(A)=w_{1}(T)\] for $T$ being a positive $C_0$-semigroup on a Banach lattice with generator $A$ (see \cite[Theorem 1.4.1]{van1996asymptotic}). The second result is that:
				\begin{proposition}\label{prop_lambda0}
					$s(A)\leq\lambda_{0}.$
				\end{proposition}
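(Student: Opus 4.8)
The plan is to establish the uniform bound $\|\mathcal{Q}_t\|_{\alpha,\beta}\le Ce^{\lambda t}$ for every $\lambda>\lambda_0$, from which $s(A)\le\lambda$ follows by the standard inequality $s(A)\le\omega_0(\mathcal{Q})$ between the spectral bound and the (uniform) growth bound of a $C_0$-semigroup, and then let $\lambda\downarrow\lambda_0$. The case $\lambda_0=+\infty$ is trivial, so assume $\lambda_0<\infty$.

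First I would fix $\lambda>\lambda_0$. Observing that if $f\succ0$ and $Af\preceq\lambda f$ then also $Af\preceq\lambda' f$ for every $\lambda'>\lambda$, the set appearing in \eqref{lypunov_rate} is a half-line, so there is $f\in D(A)$ with $f\succ0$ and $Af\preceq\lambda f$ in $C(\alpha,\beta)$. The central step is a differential-to-integral inequality. Since $f\in D(A)$, the curve $t\mapsto\mathcal{Q}_tf$ is $C^1$ in $C(\alpha,\beta)$ with $\frac{d}{dt}\mathcal{Q}_tf=\mathcal{Q}_tAf$, hence $\frac{d}{dt}\bigl(e^{-\lambda t}\mathcal{Q}_tf\bigr)=e^{-\lambda t}\mathcal{Q}_t(Af-\lambda f)$. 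Because $\mathcal{Q}_t$ is positivity preserving (Theorem~\ref{semigroup}) and $Af-\lambda f\preceq0$, the integrand is $\preceq0$ for every $t$; integrating from $0$ to $t$ and using that the positive cone of the Banach lattice $C(\alpha,\beta)$ is closed yields $e^{-\lambda t}\mathcal{Q}_tf-f\preceq0$, that is $\mathcal{Q}_tf\preceq e^{\lambda t}f$ for all $t\ge0$.

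Next I would convert this into a norm estimate. Write $u:=x^\alpha(1-x)^\beta$ for the order unit of $C(\alpha,\beta)$; the assumption $f\succ0$ means $f/u\in C^0([0,1])$ is strictly positive on the compact interval $[0,1]$, hence bounded below by some $c>0$, so $u\preceq c^{-1}f$. Therefore, for any $g\in C(\alpha,\beta)$ with $\|g\|_{\alpha,\beta}\le1$ we have $|g|\preceq u\preceq c^{-1}f$, and by positivity $|\mathcal{Q}_tg|\preceq\mathcal{Q}_t|g|\preceq c^{-1}\mathcal{Q}_tf\preceq c^{-1}e^{\lambda t}f$. Taking $\|\cdot\|_{\alpha,\beta}$-norms and setting $C:=c^{-1}\|f\|_{\alpha,\beta}$ gives $\|\mathcal{Q}_tg\|_{\alpha,\beta}\le Ce^{\lambda t}$, hence $\|\mathcal{Q}_t\|_{\alpha,\beta}\le Ce^{\lambda t}$. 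Consequently $\omega_0(\mathcal{Q})=\lim_{t\to\infty}t^{-1}\log\|\mathcal{Q}_t\|\le\lambda$, so $s(A)\le\omega_0(\mathcal{Q})\le\lambda$. Letting $\lambda\downarrow\lambda_0$ completes the proof.

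The main obstacle I anticipate is making the differential-inequality step fully rigorous in the ordered Banach space setting: justifying that $t\mapsto\mathcal{Q}_tf$ is $C^1$ with the stated derivative (routine once $f\in D(A)$), that $Af-\lambda f\preceq0$ is meant in exactly the lattice order used to define $C(\alpha,\beta)$, and that the Bochner integral of a cone-valued continuous path remains in the closed cone. One should also record the otherwise harmless fact that the semigroup $\mathcal{Q}_t$ of Theorem~\ref{semigroup}, a priori defined on $C^0([0,1])$, genuinely acts as a $C_0$-semigroup on $C(\alpha,\beta)$ with generator $A$ — this is exactly the content of the Remark following Definition~\ref{c(alpha,beta)}, since $g\mapsto g/u$ is an isometric isomorphism $C(\alpha,\beta)\to C^0([0,1])$; everything else is bookkeeping with that isomorphism.
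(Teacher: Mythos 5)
Your proof is correct, but it follows a genuinely different route from the paper's. The paper adapts the argument of \cite[B-II Corollary 1.14]{arendt1986one}: given $\lambda>\lambda_0$ and $u\succ 0$ with $Au\le\lambda u$, it introduces the half-norm $P_u(f)=\sup f^+/u$, proves that $B=A-\lambda$ is $P_u$-dissipative by applying the positive minimum principle (Lemma \ref{positive minimum principle}) to $P_u(f)u-f$ at a maximizing point, deduces via resolvents and the exponential formula that the semigroup is a $P_u$-contraction, hence $w_1(A)\le\lambda$, and finally invokes the nontrivial identity $s(A)=w_1$ for positive $C_0$-semigroups on Banach lattices. You instead run a direct comparison (Gronwall-type) argument on the orbit: for $f\in D(A)$, $f\succ0$, $Af\preceq\lambda f$, positivity of $\mathcal{Q}_t$ gives $\frac{d}{dt}\bigl(e^{-\lambda t}\mathcal{Q}_tf\bigr)\preceq0$, hence $\mathcal{Q}_tf\preceq e^{\lambda t}f$, and since $f$ is comparable from below to the order unit $x^\alpha(1-x)^\beta$ this upgrades to the operator-norm bound $\|\mathcal{Q}_t\|_{\alpha,\beta}\le Ce^{\lambda t}$, so $s(A)\le\omega_0(\mathcal{Q})\le\lambda$. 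What each buys: your route is more elementary (no positive minimum principle, no half-norm dissipativity machinery, no $s=w_1$ lattice theorem) and yields the stronger uniform growth bound $\omega_0(\mathcal{Q})\le\lambda_0$, which is in fact the form of the estimate invoked later in the proof of Theorem \ref{wasser_tant}; the paper's route works entirely at the level of the generator and a core, needing only the pointwise inequality $Bu\le0$ together with the minimum principle, and it is the template reused verbatim for Proposition \ref{p_v} in the two-dimensional setting. Both arguments rest on the same unproved-but-asserted facts — that $\mathcal{Q}_t$ is a positive $C_0$-semigroup on $C(\alpha,\beta)$ with generator $A$ (the Remark after Definition \ref{c(alpha,beta)}) and that the comparison function realizing $\lambda>\lambda_0$ lies in $D(A)$ — so neither is at a disadvantage there; you should just make explicit, as you note, that the cone of $C(\alpha,\beta)$ is closed so the Bochner integral of the cone-valued integrand stays in the cone, and that $|\mathcal{Q}_tg|\preceq\mathcal{Q}_t|g|$ uses linearity plus positivity.
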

				\begin{proof}
					Our proof is a modification of the proof of \cite[B-II Corollary 1.14]{arendt1986one}. But unlike $C^0([0,1])$, our underlying space $C(\alpha, \beta)$ present different boundary behaviors which should be treated carefully. So we provide a whole proof here.
					
					Given $\lambda>\lambda_0$, by definition of $\lambda_0$, there exists $0<u\in C(\alpha,\beta)$ such that $Au\leq\lambda u$. We define a strict half norm $P_u$ on $C(\alpha,\beta)$: \begin{gather*}
						P_{u}(f)=\underset{x\in[0,1]}{\sup}\frac{f^{+}(x)}{u(x)},\qquad f^+:=\max(f,0).
					\end{gather*} 
					Since $u>0$, $P_u$ is well defined on $C(\alpha,\beta)$. $P_u$ gives rise to a norm
					\begin{align*}||f||_p:=P_u(f)+P_u(-f)\end{align*}
					which is equivalent to the norm on $C(\alpha,
					\beta)$. 
					
					Let $B=\overline{L}-\lambda$, then $Bu\leq 0$.
					We now show $B$ is $P_u$-dissipative (\cite[A-II Definition 2.1]{arendt1986one}), i.e. for all $f\in D(B)$, there exists a $\phi_f$ in the dual space  $C(\alpha,\beta)'$ 
					such that:
					\begin{enumerate}
						\item $(Bf,\phi_f)\leq0$
						\item $\forall g\in C(\alpha,\beta)$, $(g,\phi_f)\leq P_u(g)$. Especially $(f,\phi_f)=P_u(f)$.
					\end{enumerate}

					Fix $f$. If $f\leq 0$, define $\phi_f:=0$. If $f$ is positive at least at one point, denote by $x_0$ a point such that $P_u(f)=\frac{f^+(x_0)}{u(x_0)}$. Now consider $\phi_f\in C(\alpha,\beta)'$ such that  
					\begin{align*}(g,\phi_{f})=\frac{g(x_0)}{u(x_0)}.\end{align*}
					Clearly such $\phi_f$ satisfies the second condition. Next we check the first condition. Let $f\in D(B)$, if $P_{u}(f)=0$, then $f\leq 0$, so $(Bf,\phi_f)=(Bf,0)=0$. If $P_{u}(f)>0$, since $P_{u}(f)=\frac{f(x_0)}{u(x_0)}\geq\frac{f(x)}{u(x)}$, then
					\begin{gather}
						P_{u}(f)\cdot u-f\geq 0,\ (P_{u}(f)\cdot u-f)(x_0)=0.
					\end{gather} 
					By the positive minimum principle Lemma \ref{positive minimum principle},
					\begin{align*}
						Bf(x_0)\leq P_{u}(f)(Bu)(x_0)\leq0
					\end{align*} 
					i.e. $(Bf,\phi_{f})\leq0$. 
					
					Let $P_t$ be the semigroup generated by $B$. We show that $P_t$ is $P_u$-contraction, i.e. $P_u(P_tf)\leq P_u(f), \forall f\in D(B)$.  Let $f\in D(B),\ t>0$, then
					\begin{align*}P_u(f)=(f,\phi_f)=(f-tBf+tBf,\phi_f)\leq(f-tBf,\phi_f)\leq P_u(f-tBf).\end{align*}
					Thus for $\lambda>w_1$, $f\in C(\alpha,\beta)$, $(\lambda-B)R(\lambda,B)f=f$, so $P_u(\lambda R(\lambda,B)f)\leq P_u(f)$,
					then by the formula
					$P_tf=\underset{n\to\infty}{\lim}\left(\frac{n}{t}\cdot R(\frac{n}{t},B)\right)^nf$,
					we have \begin{align*}P_u(P_tf)\leq P_u(f).\end{align*}
					Thus the closure $\overline{B}$ generates a $P_{u}$-contraction semigroup, hence the closure $\overline{A}=\overline{B}+\lambda$ of $A$ generates a positive semigroup of type $w_1(\bar{A})\leq\lambda$. Hence we showed that $\lambda_{0}\geq w_{1}(A)=s(A).$
				\end{proof}

				\subsubsection{Estimation of $\lambda_0$ and rate of convergence}\label{sec:lam0estimate}
				In this part, we show that $\lambda_0<0$, which implies the exponential convergence to $\delta$ measures at tangent boundaries. In order to do this, we construct $u$ that satisfies $Lu<\lambda u$ for some $
				\lambda<0$. The strategy of construction is to first use the behavior of $L$ in the vicinity of two the boundary points to construct $u$ near the boundaries and then find a connecting interior function. 
				\paragraph{Boundary Construction}
				$\mathit{1.\ Quadratic\  endpoint}$.
				In the vicinity of quadratic point, $L=x^2\partial_{xx}+(b(x)+1)x\partial_x$ and recall  ${b_-}=b(0)$. Let \begin{align*}u=Ax^c, A>0.\end{align*}
				Taking $c=\frac{-{b_-}}{2}$,
				\begin{align*}
					\frac{Lu}{u}=(c+b(x))c,
				\end{align*}
				is negative in a neighborhood of 0.
				\bigskip\\
				$\mathit{2. Kimura\ endpoint}.$ 
				In the vicinity of a Kimura point, $L=x\partial_{xx}+b(x)\partial_x$ with $b=b(0)$. Let
				\begin{align*}
					\mathit{u=
						\begin{cases}
							\begin{array}{cc}
								Ax^c,\ 0<c<1 & b=0\\
								A(1-x)^c,\ c>0 & b>0 
							\end{array}
					\end{cases}}.
				\end{align*}
				In both cases
				\begin{gather*}
					\frac{Lu}{u}=(c-1+b(x))cx^{-1}\\
					\frac{Lu}{u}=c(1-x)^{-2}[(c-1)x-b(x)(1-x)]
				\end{gather*}
				are negative in a neighborhood of 0.
				
				\paragraph{Interior Construction}
				Suppose we have constructed $u$ in $[0,x_1]$ and $[x_2,1]$. We now need to construct an interior function $u$ that satisfies the boundary conditions. Inside $[x_1,x_2]$, suppose $L$ has the form \begin{align*}L=\partial_{xx}+b(x)\partial_x,\end{align*}
				where $b(x)$ is smooth on $[x_1,x_2]$.
				Let 
				\begin{align}B(x)=\int_{x_1}^x b(t)dt,\end{align}
				then $B$ is smooth on $[x_1,x_2]$.  We want $Lu$ strictly negative, that is \begin{align*}u_{xx}+b(x)u_x<0.\end{align*} 
				It is equivalent to $e^{B(x)}u_x$ being a decreasing function, i.e. there exists some positive function $f$ on $[x_1,x_2]$, such that
				\begin{align}\label{derivative}
					(e^{B(x)}u_x)'=-f.
				\end{align}
				
				\paragraph{Case I: one tangent, one transverse points}
				Without loss of generality, we assume a tangent boundary at $x=0$. Now by construction of $u$ at both boundaries, $u_x(x_1)>0$, $u_x(x_2)>0$. We first modify the constant $A$ in the construction near $x=1$  such that 
				\begin{align*}
					e^{B(x_2)}u_x(x_2)<e^{B(x_1)}u_x(x_1).
				\end{align*}
				Next, by \eqref{derivative}, $f(x_1)=-e^{B(x_1)}Lu(x_1)>0,\ f(x_2)=-e^{B(x_2)}Lu(x_2)>0$ are two positive fixed constants. Then we can set $f$ to be a positive function with $f(x_1),\ f(x_2)$ fixed and 
				\begin{align*}\int_{x_1}^{x_2}f(t)dt=e^{B(x_1)}u_x(x_1)-e^{B(x_2)}u_x(x_2).\end{align*}
				Such an $f$ guarantees that $e^{B(x)}u_x(x)> e^{B(x_2)}u_x(x_2)>0$ for $x\in [x_1,x_2]$ which guarantees positiveness of $u_x$ and hence positiveness of $u$. 
				\paragraph{Case II: two tangent boundaries} By construction of $u$ at both boundaries, $u_x(x_1)>0,\ u_x(x_2)<0$. We directly get $ e^{B(x_2)}u_x(x_2)<e^{B(x_1)}u_x(x_1)$. 
				
				Denoting $F(x)=\int_{x_1}^x f(t)dt$, then $F(x)$ is a differentiable increasing function on $[x_1,x_2]$ with $F(x_1)=0$. Then by considering derivatives of $u$
				\begin{align*}
					u_x(x)=e^{-B(x)}\left[e^{B(x_1)}u_x(x_1)-F(x)\right]
				\end{align*}
				we know $u$ is first increasing then decreasing on $[x_1,x_2]$, which implies $u$ is lower bounded by $\min(u(x_1),u(x_2))$. Hence we only need to assign a positive $f$ such that 
				\begin{align*}
					\int_{x_1}^{x_2}f(t)dt=e^{B(x_1)}u_x(x_1)-e^{B(x_2)}u_x(x_2).
				\end{align*}

				\begin{theorem}\label{wasser_tant}
					If there is only one tangent endpoint $p$, then for any non-quadratic point $x$, the transition probability $p_t(x,\cdot)$ converges exponentially to $\delta(p)$ in Wasserstein distance. 
					
					If there are two tangent endpoints, then there exists $S_0$, satisfying $S_0(0)=0$, $S_0(1)=1$ and $LS_0=0$, such that for any probability measure $v$, $\mathcal{Q}^*_tv$ converges exponentially to $\delta(0)\int_0^1 v(1-S_0)+\delta(1)\int_0^1 vS_0$ in Wasserstein distance. 
				\end{theorem}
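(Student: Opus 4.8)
The plan is to reduce both assertions to a single exponential decay bound for $\mathcal{Q}_t$ on the Banach lattice $C(\alpha,\beta)$ and then to read off Wasserstein convergence by testing against $1$-Lipschitz functions, which — in the two-endpoint case, after subtracting an explicit $L$-harmonic interpolant — lie in $C(\alpha,\beta)$ with positive exponents. First I would record the decay bound. Let $u\succ0$ be the function built in Section~\ref{sec:lam0estimate}, so that $u/(x^\alpha(1-x)^\beta)$ is continuous on $[0,1]$, bounded above and below by positive constants, and $Lu\le\lambda u$ with $\lambda<0$. Since $u\in D(A)$, positivity of $\mathcal{Q}_t$ gives $\frac{d}{dt}\mathcal{Q}_tu=\mathcal{Q}_tLu\le\lambda\mathcal{Q}_tu$, so a Gr\"onwall argument applied to $e^{\lambda t}u-\mathcal{Q}_tu$ yields $\mathcal{Q}_tu\le e^{\lambda t}u$. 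Sandwiching any $f\in C(\alpha,\beta)$ with $\|f\|_{\alpha,\beta}\le1$ between $\pm c\,u$ for a constant $c$ depending only on $u$, and using monotonicity of $\mathcal{Q}_t$, I obtain $\|\mathcal{Q}_tf\|_{\alpha,\beta}\le Me^{\lambda t}$ for a constant $M$ depending only on $u,\alpha,\beta$ (equivalently this is $s(A)\le\lambda_0<0$ from Proposition~\ref{prop_lambda0}). Throughout I use the Remark after Definition~\ref{c(alpha,beta)}: on $C(\alpha,\beta)$ the operator $\mathcal{Q}_t$ is still represented by the transition kernel, $\mathcal{Q}_tf(x)=\int_{[0,1]}f\,dp_t(x,\cdot)$.

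For the one-tangent case, take $p=0$ without loss of generality and $g(y):=\mathrm{dist}(y,p)=y$. Because $\alpha\in(0,1)$ at the tangent point and $\beta\in\{0\}\cup(-\infty,0)$ at the transverse point, $g(y)/(y^\alpha(1-y)^\beta)=y^{1-\alpha}(1-y)^{-\beta}$ is bounded on $[0,1]$, so $g\in C(\alpha,\beta)$. The only coupling of $p_t(x,\cdot)$ with $\delta_0$ is the product, hence $W_1(p_t(x,\cdot),\delta(p))=\int_{[0,1]}y\,dp_t(x,\cdot)=\mathcal{Q}_tg(x)$, and the decay bound gives $W_1(p_t(x,\cdot),\delta(p))\le Me^{\lambda t}\|g\|_{\alpha,\beta}\,x^\alpha(1-x)^\beta$. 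The prefactor $x^\alpha(1-x)^\beta$ is finite exactly at non-quadratic $x$; at a quadratic point $\beta<0$ forces a blow-up, consistent with a Dirac mass at that sticky point being the true limit there, and this is precisely the restriction in the statement. For non-quadratic $x$ this is exponential convergence in $W_1$.

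For the two-tangent case, let $S$ be the scale function~\eqref{scale_func}; by the proof of Theorem~\ref{maximum_prin}, here $S\in C^{2+\gamma}([0,1])$, and as $S$ is strictly increasing I normalize $S_0:=(S-S(0))/(S(1)-S(0))$, which satisfies $S_0(0)=0$, $S_0(1)=1$, $LS_0=0$, $S_0\in C^{2+\gamma}([0,1])$. Given a $1$-Lipschitz test function $f$, set $h:=f(0)(1-S_0)+f(1)S_0$; then $Lh=0$ and $h\in C^{2+\gamma}([0,1])$, so $\mathcal{Q}_th=h$. The remainder $\tilde f:=f-h$ vanishes at both endpoints and is Lipschitz with constant at most $1+\mathrm{Lip}(S_0)$, so $|\tilde f(x)|\le(1+\mathrm{Lip}(S_0))\min(x,1-x)\le C_{\alpha,\beta}\,x^\alpha(1-x)^\beta$ since $\alpha,\beta\in(0,1)$ at both tangent endpoints; thus $\tilde f\in C(\alpha,\beta)$ with $\|\tilde f\|_{\alpha,\beta}$ bounded uniformly over $1$-Lipschitz $f$. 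Using $\int f\,d\mathcal{Q}^*_tv=\int\mathcal{Q}_tf\,dv$, and writing $c_0=\int(1-S_0)\,dv$, $c_1=\int S_0\,dv$ so that $\int h\,dv=\int f\,d(c_0\delta_0+c_1\delta_1)$, I get for every $1$-Lipschitz $f$
\[
\Big|\int f\,d\mathcal{Q}^*_tv-\int f\,d(c_0\delta_0+c_1\delta_1)\Big|=\Big|\int\mathcal{Q}_t\tilde f\,dv\Big|\le Me^{\lambda t}\,\|\tilde f\|_{\alpha,\beta}\,\sup_{[0,1]}x^\alpha(1-x)^\beta .
\]
Taking the supremum over such $f$ yields $W_1\big(\mathcal{Q}^*_tv,\ \delta(0)\!\int_0^1v(1-S_0)+\delta(1)\!\int_0^1vS_0\big)\le Ce^{\lambda t}$, as claimed.

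I expect the genuine difficulty to lie not in this bookkeeping but in the decay bound it rests on — that is, in constructing the positive supersolution $u\in D(A)\cap C(\alpha,\beta)$ with $Lu\le\lambda u$, $\lambda<0$, by fitting the boundary and interior pieces of Section~\ref{sec:lam0estimate} into one function of the required regularity and prescribed asymptotics ($\alpha,\beta\in(0,1)$ at tangent points, $0$ at Kimura transverse, $<0$ at quadratic transverse). A secondary point to handle carefully is verifying that the relevant test functions ($\mathrm{dist}(\cdot,p)$, and $1$-Lipschitz functions corrected by the harmonic interpolant) really belong to $C(\alpha,\beta)$ and that the resulting estimate degenerates only at quadratic (sticky) starting points — which is exactly why the first assertion is restricted to non-quadratic $x$.
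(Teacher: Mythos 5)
Your proposal is correct and follows essentially the same route as the paper: subtract the $L$-harmonic interpolant ($f(0)$ in the one-tangent case, $f(0)(1-S_0)+f(1)S_0$ with the normalized scale function in the two-tangent case), observe the remainder lies in $C(\alpha,\beta)$ with norm bounded uniformly over $1$-Lipschitz test functions, and apply the exponential decay on $C(\alpha,\beta)$ coming from $\lambda_0<0$ via Proposition \ref{prop_lambda0} and the Lyapunov construction of Section \ref{sec:lam0estimate}. The only differences (evaluating $W_1(\cdot,\delta_p)$ directly with the test function $\mathrm{dist}(\cdot,p)$ instead of full Kantorovich duality, and sketching a Gr\"onwall/sandwich alternative that you yourself identify as equivalent to Proposition \ref{prop_lambda0}) are cosmetic.
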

				\begin{proof}
					If there is only one tangent endpoint, say $x=0$, then for any $f\in C^0([0,1])$ with $\text{Lip}(f)\leq 1$, $f$ can be decomposed as
					\begin{gather*}
						f=f_0+f(0), f_0\in C(\alpha,\beta).
					\end{gather*}
					Then by Proposition \ref{prop_lambda0}, 
					\begin{gather*}
						||\mathcal{Q}_tf-f(0)||_{C(\alpha,\beta)}=||\mathcal{Q}_tf_0||_{C(\alpha,\beta)}\leq Me^{\lambda_0t}||f_0||_{C(\alpha,\beta)}
					\end{gather*}
					with some constant $M>0$ independent of $f$. Since $f_0(0)=0, \text{Lip}(f_0)\leq 1$ and $0<\alpha<1, \beta\leq 0$, then $||f_0||_{C(\alpha,\beta)}
					\leq C_{\alpha,\beta}$ for some constant $C_{\alpha,\beta}>0$ only dependent on $\alpha,\beta$. Hence
					\begin{gather}\label{q_t}
						|\mathcal{Q}_tf(x)-f(0)|\leq MC_{\alpha,\beta}e^{\lambda_0t}x^\alpha(1-x)^\beta.
					\end{gather}
					Notice that only when $x=1$ is a quadratic endpoint do we have $\beta<0$. Hence for any non-quadratic point $x$, the transition probability $q_t(x,\cdot)$ converges to $\delta(0)$ in the sense of Wasserstein distance at an exponential rate. This convergence is uniform on any compact interval away from quadratic transverse point. If $x=1$ is a quadratic point, then $q_t(x,\cdot)=\delta(1)$ for $\forall t>0$.
					
					If both endpoints are tangent, by Theorem \ref{maximum_prin}, the kernel of $L$ is in the linear space of $\{1,S\}$. Then we can take $S_0$ as a linear combination of $1$ and $S$, such that $S_0(0)=0$ and $S_0(1)=1$.  For any $f\in C^0([0,1])$ with $\text{Lip}(f)\leq 1$, we decompose this as 
					\begin{gather*}
						f=f_0+(1-S_0)f(0)+S_0f(1), \quad f_0\in C(\alpha,\beta).
					\end{gather*}
					Again, by Proposition \ref{prop_lambda0}, 
					\begin{gather*}
						||\mathcal{Q}_tf-(1-S_0)f(0)-S_0f(1)||_{C(\alpha,\beta)}=||\mathcal{Q}_tf_0||_{C(\alpha,\beta)}\leq Me^{\lambda_0 t}||f_0||_{C(\alpha,\beta)}
					\end{gather*}
					with $M>0$ independent of $f$. Since $0<\alpha,\beta<1, f_0(0)=f_0(1)=0$ and $\text{Lip}(f_0)\leq 1$, then \begin{gather*}
						||\mathcal{Q}_tf-(1-S_0)f(0)-S_0f(1)||_0\leq M_{\alpha,\beta}||\mathcal{Q}_tf-(1-S_0)f(0)-S_0f(1)||_{C(\alpha,\beta)},\\
						||f_0||_{C(\alpha,\beta)}\leq C_{\alpha,\beta}
					\end{gather*}
					for some constant $M_{\alpha,\beta}, C_{\alpha,\beta}>0$ only dependent on $\alpha,
					\beta$. Hence,
					\begin{gather*}
						||\mathcal{Q}_tf-(1-S_0)f(0)-S_0f(1)||_0\leq MC_{\alpha,\beta}M_{\alpha,\beta}e^{\lambda_0t},
					\end{gather*}
					so that the transition probability $q_t(x,\cdot)$ converges uniformly on $[0,1]$ to $(1-S_0(x))\delta(0)+S_0(x)\delta(1)$ at an exponential rate in Wasserstein distance. Hence starting from any probability measure $v$, $\mathcal{Q}_t^*v$ converges exponentially to $\delta(0)\int v(1-S_0)+\delta(1)\int vS_0$ in Wasserstein distance.
				\end{proof}

				\begin{remark}
					Consider as a the first example the case with one tangent boundary point, say $x=0$, quadratic tangent while $x=1$ is Kimura transverse. For instance,
					\begin{gather*}
						L=x^2(1-x)\partial_{xx}+bx(a-x)\partial_{x}\ \text{on [0,1]},\ ab<1,\ b(a-1)<0.
					\end{gather*}
					Consider $f=x^c$. If we choose $0<c<1-ab$, then
					\begin{align*}
						\frac{Lf}{f}=\frac{Lx^c}{x^c}=\left[(1-x)(c-1)+b(a-x)\right]c\leq max(c-1+ab,b(a-1))c<0.
					\end{align*}
					So 
					\begin{gather*}
						\lambda_0\leq \underset{0<c<1-ab}{\min}\max(c-1+ab,b(a-1))c<0.
					\end{gather*}

					As a second example, consider two tangent boundary points , for instance
					\begin{align*}
						L=x^2(1-x)\partial_{xx}\ \text{on}\ [0,1].
					\end{align*}
					In this case, we should expect $p(t,x,\cdot)\to (1-S_0)\delta_0+S_0\delta_1$, where $S_0$ in this case is $x$. By setting $f=x^c(1-x),\ 0<c<1$, then 
					\begin{align*}
						Lf=(c(c-1)-c(c+1)x)x^c(1-x)\leq\lambda f.
					\end{align*}
					Here, $\lambda=c(c-1)<0$. So
					\begin{gather*}
						\lambda_0\leq\underset{0<c<1}{\min}c(c-1)= -\frac{1}{4}.
					\end{gather*}
				\end{remark}

				\subsection{Case with two transverse boundaries}
				Recall that we have constructed the invariant measure $\mu$ when both endpoints are transverse in Section 3.2. We first introduce a (W)-Lyapunov-Poincar\'e inequality that will be used to analyze the long time behavior of the transition probability.
				\subsubsection{(W)-Lyapunov-Poincar\'e inequality} 
				Consider $U(z)=\int_0^z b(s)ds$ with $b(s)$ negative when $s\to-\infty$ and positive when $s\to\infty$. Then $U$ grows linearly to $+\infty$ near $z=\pm\infty$.  $L$ can be taken as
				\begin{align*}
					L_z=\frac{1}{2}\partial_{zz}-\nabla U(z)\partial_z=-\frac{1}{2}\partial_z^*\partial_z
				\end{align*}
				with $\partial^*_z=\partial_z+2\nabla U(z)$ on a probability space $(X,\mu)$, with $\mu$ the invariant measure \begin{align}\label{eq:mutransverse} \mu(dz)=\frac{e^{-2U(z)}}{Z}dz\end{align}
				where the normalizing constant $Z<\infty$ due to the linear growth of $U$ . 
				The main advantage of switching to $L^2(\mu)$ is
				$L$ is self adjoint on $L^2(\mu)$. Indeed
				\begin{align*}\int_{\mathbb{R}}(\partial_z\varphi)\phi d\mu=\int_{\mathbb{R}}\varphi(-\partial_z \phi+2\phi\nabla U(z))d\mu=\int_{\mathbb{R}}\varphi\partial^*_z\phi d\mu\end{align*}
				so that
				\begin{align*}(\varphi,L\phi)_{L^2(\mu)}=(L\varphi,\phi)_{L^2(\mu)}.\end{align*}

				For some function $W\in D(L)$ with $W\geq 1$, let $I_W(t)=\int Q^2_tf Wd\mu$. Then \cite{bakry2008rate}
				\begin{align}\label{i_w'(t)}
					I'_W(t)=-\int W|\nabla (Q_tf)|^2d\mu+\int W LQ^2_tfd\mu.
				\end{align} This leads to the definition $\textbf{(W)-Lyapunov-Poincar\'e inequality}$: 
				\begin{definition}\label{def:w-lyap}
					We say that $\mu$ satisfies a (W)-Lyapunov-Poincar\'e inequality, if there exists $W\in D(L), W\geq 1$ and a constant $C_{LP}$ such that for all nice $f$ with $\int fd\mu=0$,
					\begin{align}
						\int f^2Wd\mu\leq C_{LP}\int \left(W|\nabla f|^2- W Lf^2\right)d\mu.
					\end{align}
				\end{definition}
				

				
				\paragraph{Local {Poincar\'e} inequality}
				In the transverse case, we can not find a global Lyapunov function. We have to weaken the condition to define a  $\textbf{local Lyapunov function}$ $V$ as
				\begin{align}\label{func-lyapunov}
					LV\leq -\alpha V+\beta\mathbf{1}_{\mathcal{C}},\ V\geq1
				\end{align}
				for some set $\mathcal{C}$. 
				
				\begin{definition}[Local {Poincar\'e} inequality]\label{local-Poincare-ineq}
					Let ${\Omega}$ be a subset of the whole space $X$ (we will use $\mathcal{C}\subset {\Omega}$). We say that $\mu$ satisfies a
					local {Poincar\'e} inequality on ${\Omega}$ if there exists some constant $\kappa_{\Omega}$ such that for all nice $f$ with
					$\int_X fd\mu=0$,
					\begin{align}
						\int_{\Omega} f^2d\mu\leq \kappa_U\int_X|\nabla f|^2d\mu+\frac{1}{\mu({\Omega})}\left(\int_{\Omega} fd\mu\right)^2.
					\end{align}
				\end{definition}
				
				We now show the:
				\begin{proposition}\label{prop-w-lya}
					Assume that there is some local Lyapunov function $V\geq 1$ under some set $\mathcal{C}$ such that \eqref{func-lyapunov} holds, and $\mu$ satisfies a local {Poincar\'e} inequality on ${\Omega}\supset \mathcal{C}$ with moreover
					\begin{align}
						\beta\mu({\Omega}^c)<\alpha\mu({\Omega}).
					\end{align}
					Then we can find $\lambda\geq 0$ such that if $W=V+\lambda$, then $\mu$ satisfies a (W)-Lyapunov-{Poincar\'e} inequality.
				\end{proposition}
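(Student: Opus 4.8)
The plan is to reduce the claimed inequality to a purely algebraic estimate and then feed in the local Poincar\'e inequality, balancing the constants through a suitable choice of $\lambda$ and $C_{LP}$. First I would use that $L$ is self-adjoint in $L^2(\mu)$ — established earlier in this subsection — to integrate by parts: for nice $f$ and $W\in D(L)$ one has $\int W\, Lf^2\,d\mu=\int (LW)f^2\,d\mu$, so that
$$\int\big(W|\nabla f|^2-W Lf^2\big)\,d\mu=\int W|\nabla f|^2\,d\mu+\int(-LW)f^2\,d\mu.$$
Taking $W=V+\lambda$ gives $LW=LV$, and the local Lyapunov condition \eqref{func-lyapunov} yields the pointwise bound $-LW\ge \alpha V-\beta\mathbf{1}_{\mathcal{C}}=\alpha W-\alpha\lambda-\beta\mathbf{1}_{\mathcal{C}}$. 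Hence it suffices to prove, for all nice $f$ with $\int f\,d\mu=0$,
$$\int (V+\lambda)f^2\,d\mu\le C_{LP}\Big(\int W|\nabla f|^2\,d\mu+\alpha\int V f^2\,d\mu-\beta\int_{\mathcal{C}}f^2\,d\mu\Big).$$

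The second step is to split the mass between $\Omega$ and $\Omega^c$. Since $\mathcal{C}\subset\Omega$, the indicator $\mathbf{1}_{\mathcal{C}}$ is supported in $\Omega$, so on $\Omega^c$ the contribution $\alpha\int_{\Omega^c}Vf^2\,d\mu\ge\alpha\int_{\Omega^c}f^2\,d\mu$ (using $V\ge1$) is available with nothing to cancel it; this will absorb every $\int_{\Omega^c}f^2$ term produced below. On $\Omega$, I would invoke the local Poincar\'e inequality on $\Omega$ together with $\int_\Omega f\,d\mu=-\int_{\Omega^c}f\,d\mu$ and Cauchy--Schwarz, $\big(\int_\Omega f\,d\mu\big)^2=\big(\int_{\Omega^c}f\,d\mu\big)^2\le \mu(\Omega^c)\int_{\Omega^c}f^2\,d\mu$, to obtain
$$\int_\Omega f^2\,d\mu\le \kappa_\Omega\int_X|\nabla f|^2\,d\mu+\frac{\mu(\Omega^c)}{\mu(\Omega)}\int_{\Omega^c}f^2\,d\mu.$$
Then writing $\int(V+\lambda)f^2\,d\mu\le \int Vf^2\,d\mu+\lambda\int_\Omega f^2\,d\mu+\lambda\int_{\Omega^c}f^2\,d\mu$, bounding $\int_{\mathcal{C}}f^2\le\int_\Omega f^2$ and substituting the display above, the desired inequality reduces to two conditions on $C_{LP}$: with $r:=\mu(\Omega^c)/\mu(\Omega)$,
\begin{enumerate}
\item[(i)] the coefficient of $\int_X|\nabla f|^2\,d\mu$ is controlled, which (using $\int W|\nabla f|^2\ge(1+\lambda)\int|\nabla f|^2$) reads $C_{LP}(1+\lambda-\beta\kappa_\Omega)\ge \lambda\kappa_\Omega$;
\item[(ii)] the total coefficient of $\int_{\Omega^c}f^2\,d\mu$ is dominated by $(C_{LP}\alpha-1)\int_{\Omega^c}Vf^2\ge(C_{LP}\alpha-1)\int_{\Omega^c}f^2$, which reads $C_{LP}(\alpha-\beta r)\ge 1+\lambda(1+r)$.
\end{enumerate}
Condition (ii) is solvable for $C_{LP}$ precisely because the hypothesis $\beta\mu(\Omega^c)<\alpha\mu(\Omega)$ makes $\alpha-\beta r>0$.

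The one genuinely load-bearing choice — and the reason one cannot simply take $\lambda=0$ — is condition (i): it has a positive solution $C_{LP}$ only when $1+\lambda>\beta\kappa_\Omega$. So I would first fix $\lambda:=\beta\kappa_\Omega$ (or any $\lambda$ with $1+\lambda>\beta\kappa_\Omega$), which is exactly where the freedom ``$\lambda\ge 0$'' in the statement is used, and only then choose $C_{LP}$ larger than the finitely many explicit lower bounds coming from (i), (ii), and the auxiliary requirement $C_{LP}\alpha\ge1$. The rest is routine bookkeeping of constants; the only analytic point requiring care is the justification of the integration by parts in the first step — absence of boundary contributions at the two transverse endpoints and the fact that $V\in D(L)$ — for which I would appeal to the functional framework of \cite{bakry2008rate} already in force here.
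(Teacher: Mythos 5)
Your argument is correct and is essentially the paper's own proof: both test the local Lyapunov inequality against $f^2$, use the symmetry of $L$ in $L^2(\mu)$ to exchange $\int W\,Lf^2\,d\mu$ with $\int (LW)f^2\,d\mu$, combine the local Poincar\'e inequality with Cauchy--Schwarz over $\Omega^c$, exploit $V\geq 1$, and pick $\lambda$ precisely so that $1+\lambda$ dominates $\beta\kappa_\Omega$ (the paper takes $\lambda=(\beta\kappa_\Omega-1)_+$, you take $\lambda=\beta\kappa_\Omega$). The only difference is bookkeeping: the paper keeps the estimate in terms of $\int f^2V\,d\mu$ and reads off the explicit constant $C_{LP}=(1+\lambda)\big/\big(\alpha-\beta\mu(\Omega^c)/\mu(\Omega)\big)$, whereas you reapply the local Poincar\'e bound to the extra $\lambda\int f^2\,d\mu$ term and solve your two linear constraints (i)--(ii) for a slightly larger but equally valid $C_{LP}$.
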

				\begin{proof}
					Multiply \eqref{func-lyapunov} by $f^2$ and integrate to get
					\begin{align*}
						\int_ X f^2LVd\mu\leq -\alpha\int_X f^2Vd\mu+\beta\int_Cf^2d\mu.
					\end{align*}
					Let $\int_X fd\mu=0$. The local {Poincar\'e} inequality implies 
					\begin{align*}
						\int_{\Omega} f^2d\mu & \leq\kappa_{\Omega}\int_X|\nabla f|^2d\mu+\frac{1}{\mu({\Omega})}\left(\int_{\Omega} fd\mu\right)^2\\
						& \leq \kappa_{\Omega}\int_X|\nabla f|^2d\mu+\frac{1}{\mu({\Omega})}\left(-\int_{{\Omega}^c} fd\mu\right)^2 \\
						&\leq \kappa_{\Omega}\int_X|\nabla f|^2d\mu+\frac{\mu({\Omega}^c)}{\mu({\Omega})}\left(\int_{{\Omega}^c} f^2d\mu\right),
					\end{align*}
					so that
					\begin{align*}
						\int_Xf^2\left(LV+\alpha V\right)d\mu&\leq \beta\int_Cf^2d\mu  \leq \beta\int_{\Omega}f^2d\mu
						\\&\leq \beta\kappa_U\int_X|\nabla f|^2d\mu+\frac{\beta\mu({\Omega}^c)}{\mu({\Omega})}\left(\int_{{\Omega}^c} f^2Vd\mu\right).
					\end{align*}
					We re-organize the inequality as,
					\begin{align*}
						\alpha\left(1-\frac{\beta\mu({\Omega}^c)}{\alpha\mu({\Omega})}\right)\int_Xf^2Vd\mu & \leq \beta\kappa_{\Omega}\int_X|\nabla f|^2d\mu-\int_Xf^2LVd\mu\\
						& \leq\int_X|\nabla f|^2(V+\lambda)d\mu-\int_Xf^2L(V+\lambda)d\mu\\
						& =\int_X|\nabla f|^2(V+\lambda)d\mu-\int_X Lf^2(V+\lambda)d\mu.
					\end{align*}
					Here we take$\ \lambda=(\beta\kappa_U-1)_+$ so that $\beta\kappa_{\Omega}\leq V+\lambda$. 
					Since,
					\begin{align*}\int_X f^2(V+\lambda)d\mu\leq(1+\lambda)\int_X f^2Vd\mu,\end{align*}
					then
					\begin{align*}
						\alpha\left(1-\frac{\beta\mu({\Omega}^c)}{\alpha\mu({\Omega})}\right)\frac{1}{1+\lambda}\int_X f^2(V+\lambda)d\mu\leq\int_X\left(|\nabla f|^2(V+\lambda)-Lf^2(V+\lambda)\right)d\mu
					\end{align*}
					and $\frac{1}{C_{LP}}=\alpha\left(1-\frac{\beta\mu({\Omega}^c)}{\alpha\mu({\Omega})}\right)/(1+\lambda)$.
				\end{proof}

				\subsubsection{Long time behaviors}
				Let us come back to the operator $L=\frac{1}{2}\Delta-\nabla U\cdot\nabla$ on $(X,\mu)$.
				We first construct the local Lyapunov function $V$ with $\mathcal{C}$ satisfying \eqref{func-lyapunov}. 
				\begin{lemma}
					There exists a local Lyapunov function.
				\end{lemma}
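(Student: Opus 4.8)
The plan is to construct $V$ explicitly by patching together local pieces near each (transverse) boundary endpoint with a bounded interior piece, exactly mirroring the construction in Section~\ref{sec:lam0estimate} but now relaxing the requirement $LV \le -\alpha V$ at interior points, where we only demand $LV \le -\alpha V + \beta$. First I would work in the heat/standard coordinate $z$ in which $L_z = \tfrac12\partial_{zz} - \nabla U(z)\partial_z$ on $(X,\mu)$, and treat the two ends $z\to \pm\infty$ (quadratic transverse case) or the finite transverse Kimura endpoints separately. Near a quadratic transverse endpoint, say $z\to -\infty$ with $\nabla U(z)$ bounded away from $0$ and negative, the natural candidate is $V(z) = e^{-\theta z}$ for small $\theta>0$: then $L_z V / V = \tfrac12\theta^2 + \theta\,\nabla U(z)$, which is strictly negative for $z$ sufficiently negative because $\nabla U \to$ a negative limit (this is precisely the transversality condition $b(0)>1$ translated through the change of variables). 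Near a transverse Kimura endpoint one takes instead a bounded function like $V = A - (\text{const})\,x^c$ with $0<c<1$, or more simply works with $V$ of the form used for the scale-function estimate; transversality $b>0$ again forces $LV/V < 0$ in a one-sided neighborhood. This gives, on a neighborhood $N$ of $\partial X$, a function $V \ge 1$ with $LV \le -\alpha_0 V$ for some $\alpha_0 > 0$.

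Next I would extend $V$ across the compact interior region $X \setminus N$. Here $V$ is already prescribed and positive on the two boundary collars, and the interior is a compact interval on which $L$ is uniformly elliptic with smooth coefficients; so I can interpolate $V$ smoothly between the two collar values, keeping $V \ge 1$. On this compact interior piece $LV$ is a bounded continuous function, hence $LV \le K$ for some constant $K$, and since $V \ge 1$ there we trivially get $LV \le -\alpha V + (K+\alpha)\mathbf{1}_{\mathcal{C}}$ with $\mathcal{C} = X\setminus N$. Choosing $\alpha = \alpha_0$ on the collars and shrinking $\alpha$ if necessary to the same value globally, and setting $\beta = K + \alpha$, the inequality $LV \le -\alpha V + \beta\mathbf{1}_{\mathcal{C}}$ holds on all of $X$. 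The set $\mathcal{C}$ is a compact subset of the open interval, as required by \eqref{func-lyapunov}.

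The main obstacle I anticipate is making the patching genuinely $C^2$ (or at least in $D(L)$) across the junction between the boundary collars and the interior, while simultaneously preserving both $V\ge 1$ and the sign of $LV$ in a small overlap region — because near the junction $LV$ involves second derivatives of the interpolant, which are not automatically of the right sign. The fix is standard: enlarge $\mathcal{C}$ slightly to absorb the transition region (there $LV$ is merely bounded, which is all we need since we allow the $+\beta\mathbf{1}_{\mathcal{C}}$ slack), so the delicate sign condition $LV \le -\alpha V$ is only required strictly inside the collars where the explicit exponential/power computation already delivers it. A secondary technical point is checking $V\in D(L)$: near a quadratic endpoint $e^{-\theta z}$ lies in the relevant local $C^2$-type space once $\theta$ is small (it grows slower than the invariant density decays, so $V \in L^2(\mu)$ and $LV \in L^2(\mu)$), and near a Kimura endpoint the bounded choice of $V$ satisfies the one-sided regularity in Definition~\ref{def_holder} by inspection. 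Assembling these verifications completes the construction.
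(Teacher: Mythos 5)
Your construction is essentially the paper's: an explicit Lyapunov piece near each transverse endpoint (exponential growth in the heat coordinate at a quadratic end, a bounded decreasing function at a Kimura end), then a smooth interpolation over the compact interior whose bounded $LV$ is absorbed into the $\beta\mathbf{1}_{\mathcal{C}}$ term, with $\mathcal{C}$ enlarged to cover the patching region. One caveat at the Kimura transverse endpoint: with $V=A-Cx^{c}$ one gets $LV=-Cc\,x^{c-1}\bigl[(c-1)+b(x)\bigr]$, so the sign claim fails for arbitrary $c\in(0,1)$ when $0<b(0)<1$; you must take $c>1-b(0)$ (or simply use a linear function, $V=2-(x-x_l)$, as the paper does, which gives $LV=-b(x)<0$ directly).
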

				\begin{proof}
					Let $X=[x_l,x_r],\ -\infty\leq x_l<x_r\leq\infty$ and
					\begin{align*}V=\begin{cases}
							\begin{array}{cc}
								e^{U} & x_{l}=-\infty\\
								2-(x(z)-x_{l}) & x_{l}>-\infty.
					\end{array}\end{cases}\end{align*}
					We define $V$ similarly in the vicinity of $x_r$.  We check that $LV\leq -\alpha V,\ V>1$ for some $\alpha>0$ in a neighborhood $x_l$:
					\begin{enumerate}
						\item When $x_l=-\infty$, for $V=e^{U}$,
						\begin{align*}
							LV=V\Big(\frac{1}{2}\Delta U-\frac{1}{2}|\nabla U|^2\Big).
						\end{align*}
						By the transverse condition, $\underset{z\to -\infty}{\lim}\nabla U=-b_{-}<0$, which implies $\underset{z\to\infty}{\lim}\Delta U=0$. So 
						\begin{align*}\underset{z\to -\infty}{\lim}\Big(\frac{1}{2}\Delta U-\frac{1}{2}|\nabla U|^2\Big)=-\frac{1}{2}b_-^2<0.\end{align*}
						Since $\underset{z\to -\infty}{lim}U(z)=\infty$, for $0<\alpha<\frac{b_-^2}{2}$, we have $LV\leq -\alpha V,\ V\geq 1$ in a neighborhood of $x_l$.
						
						\item When $x_l>-\infty$, for $V=2-(x(z)-x_l)$, 
						\begin{align*}LV=L_x(2-x)=-b(x).\end{align*}
						By the transverse condition, $b_{-}>0$, so $LV(0)<0$ and since $V(0)=2>0$, $LV\leq -\alpha V,\ V\geq 1$  for some $\alpha>0$ in a neighborhood of $x_l$.
						
						In all cases, $LV\leq -\alpha V,\ V>1$ for some $\alpha>0$ in a neighborhood $x_l, x_r$.
					\end{enumerate}
					
					After constructing $V$ in a neighborhood of $x_l$ and $x_r$, we extend it to a complement $\mathcal{C}$ of these two neighborhoods as a twice differentiable function with $V\geq 1$. Since $\mathcal{C}$ is compact, $LV, V$ are bounded on $\mathcal{C}$, and we can find some $\beta>0$ large enough that $V$ is a Lyapunov function for $\mathcal{C}$:
					\begin{gather*}
						LV\leq -\alpha V + \beta\mathbf{1}_{\mathcal{C}}.
					\end{gather*}
				\end{proof}
				
				Next we verify that $\mu$ satisfies a local {Poincar\'e} inequality \eqref{local-Poincare-ineq}. Indeed by \cite[Theorem 1]{Poincare_interval}, if $\mu$ is bounded and restricted to a Euclidean ball of radius $R$, we know that it satisfies the {Poincar\'e} inequality, hence the local {Poincar\'e} inequality on it.
				
				We now state our main result of the section:
				\begin{theorem}\label{q_t:l^2}
					The invariant measure $\mu$ in \eqref{eq:mutransverse} satisfies a Poincar\'e inequality. For $f\in L^2(\mu)$,
					\begin{gather}\label{no_good_name}
						||\mathcal{Q}_tf-\int f\mu||_{L^2(\mu)}\leq e^{-\frac{t}{2C_{LP}}}||f-\int f\mu||_{L^2(\mu)}.
					\end{gather}
					For any probability measure $v=h\mu$ with $h\in L^2(\mu)$,
					\begin{gather*}
						||\mathcal{Q}_t^*v-\mu||_{\text{TV}}\leq e^{-\frac{t}{2C_{LP}}}||h-1||_{L^2(\mu)}.
					\end{gather*}
				\end{theorem}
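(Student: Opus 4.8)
\emph{Plan.} The plan is to feed the local Lyapunov function $V$ with defect set $\mathcal{C}$ built in the preceding lemma, together with the local Poincar\'e inequality for $\mu$, into Proposition~\ref{prop-w-lya}, and then to upgrade the resulting (W)-Lyapunov-Poincar\'e inequality to the unweighted Poincar\'e inequality and to the two decay estimates. First I would verify the hypotheses of Proposition~\ref{prop-w-lya}: fix $V$ and its compact defect set $\mathcal{C}$, so that the constants $\alpha,\beta$ in \eqref{func-lyapunov} are fixed; by \cite[Theorem~1]{Poincare_interval} the restriction of $\mu$ to any bounded ball $\Omega$ satisfies a local Poincar\'e inequality, and since $\mu$ is a probability measure $\mu(\Omega^c)\to 0$ as $\Omega\uparrow X$ while $\alpha,\beta$ do not change, so one can choose a bounded $\Omega\supset\mathcal{C}$ with $\beta\mu(\Omega^c)<\alpha\mu(\Omega)$. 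Proposition~\ref{prop-w-lya} then produces $W=V+\lambda\ge 1$ and a constant $C_{LP}$ for which $\mu$ obeys the (W)-Lyapunov-Poincar\'e inequality.

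\emph{Exponential decay of $I_W$.} For nice $f$ with $\int f\,d\mu=0$, invariance of $\mu$ gives $\int\mathcal{Q}_tf\,d\mu=0$; applying the (W)-Lyapunov-Poincar\'e inequality to $\mathcal{Q}_tf$ and combining with \eqref{i_w'(t)} gives
\begin{align*}
	I_W(t)\ \le\ C_{LP}\Big(\int W|\nabla \mathcal{Q}_tf|^2\,d\mu-\int W\,L(\mathcal{Q}_tf)^2\,d\mu\Big)\ =\ -C_{LP}\,I_W'(t),
\end{align*}
so $I_W'(t)\le -I_W(t)/C_{LP}$ and, by Gr\"onwall, $I_W(t)\le e^{-t/C_{LP}}\int f^2W\,d\mu$. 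Since $W\ge 1$ this already forces $\mathrm{Var}_\mu(\mathcal{Q}_tf)\to 0$, i.e.\ ergodicity.

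\emph{Poincar\'e inequality and the decay estimates.} Following the Lyapunov-to-Poincar\'e argument of \cite{bakry2008rate}, from $W\ge1$ and \eqref{func-lyapunov} written as $1\le \frac{-LW}{\alpha W}+\frac{\beta}{\alpha}\mathbf{1}_{\mathcal{C}}$ one gets, for nice $f$ with $\int f\,d\mu=0$,
\begin{align*}
	\int f^2\,d\mu\ \le\ \frac1\alpha\int\frac{-LW}{W}\,f^2\,d\mu+\frac\beta\alpha\int_{\mathcal{C}}f^2\,d\mu.
\end{align*}
An integration by parts (using $L=-\tfrac12\partial_z^*\partial_z$) followed by Young's inequality bounds the first term by $\tfrac12\int|\nabla f|^2\,d\mu$ (the terms containing $W'$ cancel), provided the boundary contributions at the transverse endpoints vanish; the local Poincar\'e inequality on $\Omega$, with $\int_\Omega f\,d\mu=-\int_{\Omega^c}f\,d\mu$ and Cauchy--Schwarz, bounds $\int_{\mathcal{C}}f^2\,d\mu$ by $\kappa_\Omega\int|\nabla f|^2\,d\mu+\tfrac{\mu(\Omega^c)}{\mu(\Omega)}\int f^2\,d\mu$. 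Substituting and using $\beta\mu(\Omega^c)<\alpha\mu(\Omega)$ gives a Poincar\'e inequality $\int f^2\,d\mu\le C_{LP}\int|\nabla f|^2\,d\mu$ (after renormalizing the constant), extended to all of $L^2(\mu)$ by density. Since $L$ is self-adjoint and nonpositive on $L^2(\mu)$, setting $g_t=\mathcal{Q}_t(f-\bar f)$ with $\bar f=\int f\,d\mu$ gives $\frac{d}{dt}\|g_t\|_{L^2(\mu)}^2=2(g_t,Lg_t)_{L^2(\mu)}=-\int|\nabla g_t|^2\,d\mu\le -\tfrac{1}{C_{LP}}\|g_t\|_{L^2(\mu)}^2$, and Gr\"onwall yields \eqref{no_good_name}. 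For $v=h\mu$ a probability measure with $h\in L^2(\mu)$, self-adjointness of $\mathcal{Q}_t$ gives $\mathcal{Q}_t^*v=(\mathcal{Q}_th)\mu$; since $\int h\,d\mu=1$ and $\mathcal{Q}_t1=1$, Cauchy--Schwarz and \eqref{no_good_name} give $\|\mathcal{Q}_t^*v-\mu\|_{\mathrm{TV}}=\int|\mathcal{Q}_th-1|\,d\mu\le\|\mathcal{Q}_t(h-1)\|_{L^2(\mu)}\le e^{-t/(2C_{LP})}\|h-1\|_{L^2(\mu)}$.

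\emph{Main obstacle.} The delicate step is the Poincar\'e inequality: making the integration by parts $\int\frac{-LW}{W}f^2\,d\mu\le\tfrac12\int|\nabla f|^2\,d\mu$ rigorous and checking that the boundary terms vanish at the possibly infinite, possibly degenerate transverse endpoints---this is where the choices $W=e^{U}$ at an infinite end and $W=2-(x-x_l)$ at a finite end, together with the linear growth of $U$ furnished by the transverse condition, are essential---plus the bookkeeping needed so that the resulting Poincar\'e constant can be identified with the $C_{LP}$ of the statement. A purely one-dimensional alternative would be to verify the Poincar\'e inequality directly from the Muckenhoupt--Hardy criterion, since the transverse condition makes $\big(\int_z^{x_r}e^{-2U}\big)\big(\int_{m}^{z}e^{2U}\big)$ bounded near each end; but the Lyapunov route above is the one that generalizes to the two-dimensional setting of Section~\ref{sec:2d}.
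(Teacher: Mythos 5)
Your proposal is correct and its overall architecture coincides with the paper's: fix the local Lyapunov pair $(V,\mathcal{C})$, pick a bounded $\Omega\supset\mathcal{C}$ with $\beta\mu(\Omega^c)<\alpha\mu(\Omega)$ on which the local Poincar\'e inequality holds, invoke Proposition~\ref{prop-w-lya} to get the (W)-Lyapunov-Poincar\'e inequality, combine with \eqref{i_w'(t)} and Gr\"onwall to get the weighted decay $\int(\mathcal{Q}_tf)^2W\,d\mu\le e^{-t/C_{LP}}\int f^2W\,d\mu$, and finish the $L^2$ and total-variation bounds exactly as you do (self-adjointness, $\mathcal{Q}_t^*(h\mu)=(\mathcal{Q}_th)\mu$, Cauchy--Schwarz). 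The one place you genuinely diverge is the passage to the unweighted Poincar\'e inequality: the paper simply observes $W\in L^1(\mu)$ and cites \cite[Corollary 3.4]{bakry2008rate}, which converts the weighted exponential decay into a Poincar\'e inequality with constant exactly $C_{LP}$, whereas you re-derive the Poincar\'e inequality from scratch via $1\le\frac{-LW}{\alpha W}+\frac{\beta}{\alpha}\mathbf{1}_{\mathcal{C}}$, the integration-by-parts bound $\int\frac{-LW}{W}f^2\,d\mu\le\tfrac12\int|\nabla f|^2\,d\mu$, and the local Poincar\'e inequality. Your route is more self-contained and makes the mechanism visible, but it leaves open precisely the point you flag: justifying the integration by parts (no boundary contributions at the transverse endpoints, whether infinite with $e^{-2U}$ decaying exponentially or finite with the Kimura-type weight) and the fact that your constant need not literally equal the $C_{LP}$ appearing in the statement. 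The citation route sidesteps both issues, which is why the paper's proof is shorter; if you keep your derivation you should add the approximation/boundary-term argument, or else fall back on the corollary as the paper does.
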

				\begin{proof}
					By construction, $\mathcal{C}$ is a compact set in the interior of $X$. We choose a larger set $U$ with $\mathcal{C}\subset U\subset int(X)$ such that $\beta\mu(U^c)<\alpha\mu(U)$. $\mu$ is bounded on $U$, so $\mu$ satisfies a local {Poincar\'e} inequality on $U$.
					Applying Proposition \ref{prop-w-lya},  $\mu$ satisfies a (W)-Lyapunov-{Poincar\'e} inequality. Thus by \eqref{i_w'(t)}, for all $f$ such that $\int f^2W\mu<\infty$ and $\int f\mu=0$, 
					\begin{gather*}
						\int(\mathcal{Q}_tf)^2W\mu\leq e^{-\frac{t}{C_{LP}}}\int f^2W\mu.
					\end{gather*}
					Since $W\in L^1(\mu)$, by \cite[Corollary 3.4]{bakry2008rate}, $\mu$ satisfies a Poincar\'e inequality with Poincar\'e constant equal to $C_{LP}$. Hence \eqref{no_good_name} is derived by Poincar\'e inequality. 
					
					For the second assertion, we use the symmetry property $\mathcal{Q}_t^*(h\mu)=(\mathcal{Q}_th)\mu$ and \eqref{no_good_name} to derive that 
					\begin{gather*}
						||\mathcal{Q}^*_tv-\mu||_{\text{TV}}=||\mathcal{Q}_th-1||_{L^1(\mu)}\leq ||\mathcal{Q}_th-1||_{L^2(\mu)}\leq e^{-\frac{t}{2C_{LP}}}||h-1||_{L^2(\mu)}.
					\end{gather*}
				\end{proof}

				\begin{remark}
					Regarding logarithmic Sobolev inequalities, which are stronger than {Poincar\'e} inequalities, we may use a criterion on a ball of radius $R$ (\cite[Proposition 2.6]{logarithmic}) to obtain when both of the endpoints are of Kimura type, that  $\mu$ satisfies a logarithmic Sobolev inequality.
				\end{remark}

				\section{Long time behaviour in two dimension}\label{sec:2d}
				In this section we analyze the long time behavior of transition probability on a two-dimensional manifold with corners. We refer to \cite{chen2022mixed} for an introduction to 2-dimensional manifold with corners in our context and  \cite{DJ} for a more complete introduction.
				
				Let $P$ be a paracompact Hausdorff topological space. A chart of $p\in P$ is a pair $(\mathcal{U}_p, \psi_p)$ where $\mathcal{U}_p$ is a neighborhood of $p$ and $\psi_p$ is a homeomorphism with $\psi_p(p)=0$ from $\mathcal{U}_p$ to a neighborhood of $0$ in $\mathbb{R}^l_+\times\mathbb{R}^{2-l}$ for some $l\in \{0,1,2\}$. Two charts $(\mathcal{U}_p, \psi_p), (\mathcal{U}_q, \psi_q)$ are said to be compatible if 
				\[\psi_p\circ \psi_q^{-1}: \psi_q(\mathcal{U}_p\cap\mathcal{U}_q)\longrightarrow\psi_p(\mathcal{U}_p\cap\mathcal{U}_q)\]
				is a diffeomorphism. The codimension $l$ is well defined for $p\in P$. We say that the point $p$ is an interior point if $l=0$, an edge
				point if $l=1$, a corner if $l=2$. A two dimensional manifold with corners is paracompact Hausdorff topological space equipped with a maximal compatible atlas. 
				
				\subsection{Lyapunov function}
				We make the following assumption:
				\begin{assumption}\label{assp:onetangentedge}
					For $L$ on a 2 dimensional compact manifold with corners $P$, there is exactly one tangent edge $H$, and when restricted to $H$, $L|_H$ is transverse to both boundary points.
				\end{assumption}

				We say that $V$ is a Lyapunov function if $V$ is strictly positive except on $H$, $V|_H\equiv 0$ and for some $\lambda_0<0$,
				\begin{align}\label{lv<v}
					LV\leq\lambda_0 V.
				\end{align}
				
				We make the following assumption:
				\begin{assumption}\label{assp:rhoexistence}
					There is a twice differentiable function $\rho(p)$ such that
					\begin{enumerate}
						\item[a.] $\rho>0$ except on $H$, $\rho|_H\equiv 0$;
						\item[b.]\label{condition_b} $\nabla\rho\neq 0$ nowhere vanishing;
						\item[c.] For $p$ in an edge $E$, if $\nabla_{E}\rho(p)=0$, then $\rho\equiv c$ in a relative neighborhood of $p$ along $E$;
						\item[d.]There is no local minimum point of $\rho$ other than on $H$.
					\end{enumerate}
				\end{assumption}
				The third assumption ensures that $\rho$ is constant or strictly monotonic along any edge $E$. In the presence of such $\rho$, there exists a stratification of $P$ so that:
				\begin{enumerate} 
					\item $P$ is covered by the layers:
					\begin{gather}\label{cover}
						P=\underset{i=0}{\overset{n}{\bigcup}}P_{[k_i,k_{i+1}]}:=\underset{i=0}{\overset{n}{\bigcup}} \{p\in P: \rho(p)\in [k_i,k_{i+1}]\}, \\ 0=k_0<k_1<\cdot\cdot\cdot k_n=\underset{P}{\max} 	\ \rho. \nonumber
					\end{gather}
					\item Each layer $P_{[k_i,k_{i+1}]}$ is covered by a finite collection of closed sets ${U}_{i,p_j}$ , i.e. 
					\begin{gather}\label{layer_b}
						P_{[k_i,k_{i+1}]}=\bigcup_{j}U_{i,p_j},
					\end{gather}
					where $p_j$ is a point in $U_{i,p_j}$ and the range of $\rho$ in each $U_{i,p_j}$ is $[k_i, k_{i+1}]$. 
				\end{enumerate}
				We leave the construction of the stratification to Lemma \ref{lem:stratification} in the Appendix. We then have the following result:
				\begin{theorem}\label{thm:lyapunov}
					There exists a Lyapunov function $V$ on $P$.
				\end{theorem}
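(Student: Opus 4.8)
The plan is to construct $V$ by gluing finitely many local models, one per piece $U_{i,p_j}$ of the stratification \eqref{cover}--\eqref{layer_b}, using the layered structure to organize the matching along the level sets $\{\rho=k_i\}$. Since there are finitely many pieces and $V$ will be strictly positive off $H$, it suffices to arrange $LV\le\lambda_j V$ with $\lambda_j<0$ on each piece: then $\lambda_0:=\max_j\lambda_j<0$ works globally, the inequality degenerating to $LV\le 0$ on $H$ itself, which holds by continuity from the models touching $H$. So the work splits into (i) producing the local models and (ii) the gluing.

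For the layer $P_{[0,k_1]}$ touching $H$: on each $U_{0,p_j}$ work in the adapted coordinates of Section~\ref{sec:2d}, in which $L$ has the form \eqref{form_edge} (if $p_j$ is an edge point of $H$) or the corner form \eqref{form_corner} (if $p_j$ is a corner at an end of $H$; there the direction transverse to $H$ is a transverse edge, since $L|_H$ is transverse at both endpoints of $H$). Take $V=x^c$, a function of the normal coordinate $x$ alone; then $\partial_y V=\partial_{xy}V=\partial_{yy}V=0$, so every mixed and tangential term of $L$ annihilates $V$ and $LV$ collapses to the one-dimensional normal operator $a\,x^m\partial_{xx}V+d\,x^{m-1}\partial_x V$. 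One can then copy the boundary construction of Section~\ref{sec:lam0estimate} verbatim: at a quadratic tangent $H$ take $c$ as there ($c=-b_-/2$ after the usual reduction, using $d(0,y)/a(0,y)<1$), at a Kimura tangent $H$ take $0<c<1$ (using $d(0,y)=0$); in both cases $LV/V<0$ in a neighborhood of $H$. Then extend $x^c$ across the rest of each $U_{0,p_j}$ as a positive function and match over $\{\rho=k_1\}$.

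On the interior layers $P_{[k_i,k_{i+1}]}$, $i\ge1$, which are bounded away from $H$, I would take $V=\psi(\rho)$ with $\psi$ increasing and concave, so that $LV=\psi'(\rho)L\rho+\psi''(\rho)\Gamma(\rho)$ with $\Gamma(\rho)\ge0$ the carr\'e du champ of $L$ on $d\rho$; since Assumption~\ref{assp:rhoexistence}(b)--(d) rule out interior critical points of $\rho$ and force $\rho$ to be strictly monotone along every edge where $\Gamma(\rho)$ degenerates, $\psi$ can be chosen on each $[k_i,k_{i+1}]$ --- built by prescribing the sign of a quantity like $(e^{B}\psi')'$ exactly as in the one-dimensional interior construction --- so that $LV\le\lambda_0 V$, with the free constants fixed to match the previous layer's value and one-sided $\rho$-derivative along $\{\rho=k_i\}$, i.e. the matching inequality $e^{B(x_1)}u_x(x_1)>e^{B(x_2)}u_x(x_2)$ of Case~I in Section~\ref{sec:lam0estimate} transported to the $\rho$ variable. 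The one exception is an edge segment on which $\nabla_E\rho\equiv 0$, hence $\rho$ is locally constant and $\psi(\rho)$ is not a strict supersolution: such a segment lies on a transverse or neutral edge other than $H$, and in a collar of it I would replace $V$ by a model adapted to that edge ($V=C-w$ in the normal coordinate $w$, with $LV$ equal to minus the inward drift, so $LV\le\lambda_0 V$ for $C$ large) glued into $\psi(\rho)$.

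The main obstacle is the gluing, step (ii): assembling finitely many local supersolutions, each with the prescribed positivity and vanishing on $H$, into a single twice-differentiable $V$ with $LV\le\lambda_0 V$ on all of $P$. This is where the layered stratification becomes essential: consecutive layers meet only along the compact level sets $\{\rho=k_i\}$, and within a layer every piece $U_{i,p_j}$ sees the full range $[k_i,k_{i+1}]$ of $\rho$, so the matching reduces to finitely many one-dimensional matching problems of the kind already handled in Section~\ref{sec:lam0estimate}, together with the local corrections near the degenerate edge segments. I expect the delicate points to be the bookkeeping of overlaps within a single layer and verifying that those corrections do not spoil $LV\le\lambda_0 V$ elsewhere.
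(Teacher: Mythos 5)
Your overall skeleton is the same as the paper's: stratify $P$ into $\rho$-layers \eqref{cover}--\eqref{layer_b}, use a power of the distance to $H$ (your $x^c$, the paper's $\rho^\nu$) in the bottom layer, and in the higher layers run a one-dimensional integrating-factor construction in the $\rho$ variable with matching across the level sets, exactly in the spirit of Section~\ref{sec:lam0estimate}. The gap is that you dismiss as ``bookkeeping'' precisely the situations where an ansatz depending on $\rho$ (or on one scalar coordinate) alone cannot work, and these are where the paper's proof spends most of its effort. First, at a corner of an interior layer at which $\rho$ is strictly monotone along both incident edges (the paper's Case IV, a corner that is neither a local maximum nor a local minimum of $\rho$), your $V=\psi(\rho)$ fails: in the corner normal form \eqref{form_corner} the carr\'e du champ $\Gamma(\rho)=a x^m\rho_x^2+b x^{m-1}y^{n-1}\rho_x\rho_y+c y^n\rho_y^2$ vanishes at the corner, so the concavity term $\psi''\Gamma(\rho)$ gives no help there, while $L\rho=d\,x^{m-1}\rho_x+e\,y^{n-1}\rho_y$ has no controlled sign (at a Kimura--Kimura corner with $\rho_x>0>\rho_y$ it can be positive, giving $LV>0$; at a quadratic--quadratic corner it vanishes, giving $LV=0>\lambda_0V$ for finite positive $V$). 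This is why the paper builds a genuinely two-variable supersolution $V(x,\rho)=c_0 f(x,\rho)h_0(\rho)+h_\epsilon(\rho)$ with cutoffs $\chi_0,\chi_1$ in the edge variable in Case IV; your proposal contains no substitute for this step.

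Second, your local fix near an edge along which $\rho$ is constant is wrong at quadratic edges. If such an edge is quadratic, every coefficient of $L_\rho$ in \eqref{l_rho_edge2} vanishes at the edge, so for \emph{any} finite smooth positive $V$ one has $LV\to 0$ while $\lambda_0V$ stays bounded away from $0$; in particular your model $V=C-w$ gives $LV\sim -d\,w\to 0$ and $LV\le\lambda_0V$ cannot hold uniformly, no matter how $C$ and $\lambda_0$ are tuned (also note that increasing $C$ makes the inequality harder, not easier). The paper's construction resolves this by letting $V$ blow up like a negative power of the distance at quadratic edges where $\rho$ is constant and at quadratic--quadratic corners, a feature absent from your proposal (you implicitly assume $V$ finite and positive everywhere off $H$). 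So the missing ideas are concrete: (i) a two-variable construction at the ``pass-through'' corners of Case IV, and (ii) allowing, and controlling, blow-up of $V$ at quadratic edges/corners lying on level sets of $\rho$; without them the layer-by-layer gluing you describe cannot produce a global $V$ with $LV\le\lambda_0V$, $\lambda_0<0$.
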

				\begin{proof}
					We first analyze the local behavior of $\rho, L$. Note that for
					\begin{gather*}
						L=A(x,y)\partial_{xx}+B(x,y)\partial_{xy}+C(x,y)\partial_{yy}+D(x,y)\partial_x+E(x,y)\partial_y,
					\end{gather*}
					under the new coordinates $(\eta,\rho)$, the ordinary differential $\rho$-part $L_\rho$ is 
					\begin{gather}\label{l_rho}
						L_\rho=(A\rho_{xx}+B\rho_{xy}+C\rho_{yy})\partial_{\rho\rho}+(L\rho)\partial_\rho.
					\end{gather}
					Given any point $p\in P$, we discuss the local property of $L$ at point $p$ subject to the following cases.
					\paragraph{1. $p$ is an interior point} 
					There exists a closed set including $p$ equipped with coordinates $(x,\rho)$ that is diffeomorhic to the rectangle $R:=[0,1]\times [\rho(p),\rho(p)+\epsilon]$. In this case, $L_\rho$ is elliptic of the form
					\begin{gather}\label{l_rho_interior}
						L_\rho= a(x,\rho)[\partial_{\rho\rho}+b(x,\rho)\partial_\rho], (x,\rho)\in R,
					\end{gather}
					where $a(x,\rho)>0$.
					
					\paragraph{2. $p$ is an edge point} Under local adapted coordinates $(x,y)$,
					\begin{enumerate}
						\item[a.] if $\rho\equiv\rho(p)$ along the edge $y=0$, then $(x,\rho)$ are local adapted coordinates. There is a neighborhood of $p$ that is diffeomorphic to the rectangle
						\[R:=[0,1]\times [\rho(p)-\epsilon,\rho(p)].\] 
						$L_\rho$ is degenerate at $\rho(p)$ of the form: for $(x,\rho)\in R$, \begin{gather}
							\label{l_rho_edge1}
							\textbf{Kimura}: L_\rho= a(x,\rho)\left[(\rho(p)-\rho)\partial^2_\rho+c(x,\rho)\partial_\rho\right]\\
							\label{l_rho_edge2}
							\textbf{quadratic}: L_\rho= a(x,\rho)\left[(\rho-\rho(p))^2\partial_\rho^2+d(x,\rho)(\rho(p)-\rho)\partial_\rho\right]
						\end{gather}
						where $a(x,\rho)>0$.
						
						\item[b.] If the gradient of $\rho$ along the edge $\nabla_x\rho(p)\neq 0$, then $(y,\rho)$ are local adapted coordinates. There is a neighborhood of $p$ that is diffeomorphic to $R:=[0,1]\times [\rho(p),\rho(p)+\epsilon]$ for some $\epsilon>0$. In this case $L_\rho$ is elliptic of the same form with (\ref{l_rho_interior}) in $R$.
					\end{enumerate}

					\paragraph{3. $p$ is a corner} Under local adapted coordinates $(x,y)$, 
					\begin{enumerate}
						\item[a.] if along one (e.g. $x$) edge $\nabla_x\rho(p)=0$, then $\rho\equiv\rho(p)$ in a relative open neighborhood of $p$ on the $x$-edge. Since $\nabla_y\rho(p)\neq 0$, then by rescaling $x$, $(x,\rho)$ are also local adapted coordinates at $p$;
						this case is then the same as in  (\ref{l_rho_edge1}), (\ref{l_rho_edge2}). See the first picture in Figure ~\ref{fig: fig_corner}.
						
						
						\item[b.]If $\nabla_x\rho(p), \nabla_y\rho(p)\neq 0$, then $(x,\rho)$ are not adapted coordinates. In this case, there exists a neighborhood of $p$ that is diffeomorphic to an irregular area inside a rectangle
						\[T:=\{(x,\rho):\rho\geq\rho(x)|_{y=0}\}\subset [0,1]\times [\rho(p)-\epsilon_1,   \rho(p)+\epsilon_2].\]
						See the second and third pictures in Figure \ref{fig: fig_corner}. $L_\rho$ is then degenerate when $x=0$ and elliptic when $x\neq 0$. 

						If $p$ is neither a local maximum nor minimum, $T$ is made up of a rectangle and an irregular area $T_1$, we can extend the coefficients of $L_\rho$ to the rectangle supplemented by the dashed lines. Then in these two rectangles, $L_\rho$ are both degenerate when $x=0$ and elliptic when $x\neq 0$.
					\end{enumerate}
					\begin{figure}[htbp]
						\centering
						\includegraphics[width=1.0\linewidth]{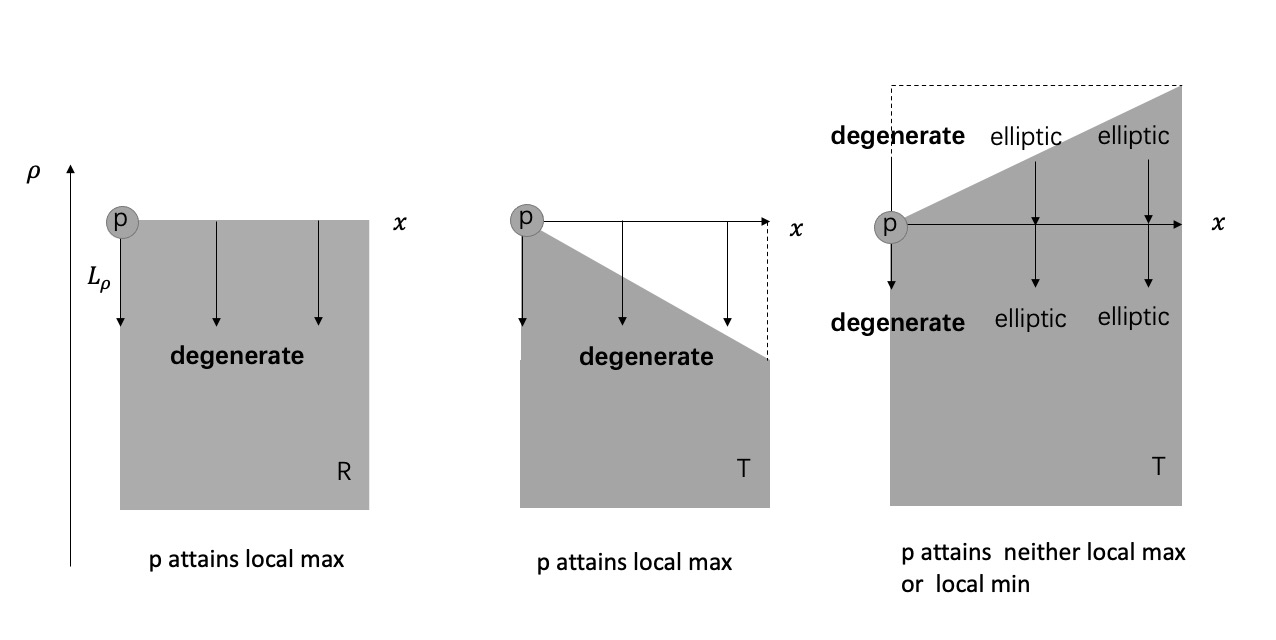}
						\caption{$L_\rho$ at a corner}
						\label{fig: fig_corner}
					\end{figure}
					
					We now start the construction of $V$ iteratively in each layer. Such $V$ generally depends only on the variable $\rho$, i.e., {$V(p)=V(\rho(p))$}, except near the corners.
					\paragraph{Case $i=0$} $P_{[0,k_1]}$ is covered by finite rectangles $R:=[0,1]\times [0,k_1]$ where $L_{\rho}$ takes the form 
					\begin{gather}
						\label{rho_kimura}\textbf{Kimura}: L_{\rho}=a(x,\rho)\left(\rho\partial_\rho^2+b(x,\rho)\partial_\rho\right)\\
						\label{rho_quadratic}\textbf{quadratic}: L_\rho= a(x,\rho)(\rho^2\partial_\rho^2+c(x,\rho)\rho\partial_\rho)
					\end{gather}
					where $a(x,\rho)>0, b(x,\rho)<1, c(x,\rho)<1$ since $H$ is a tangent edge. We take $V=\rho^\nu$, where $\nu$ depends on the considered case. In case (\ref{rho_kimura}), we choose $0<\nu<\underset{(x,\rho)}{\min}(1-b(x,\rho))$ so that $L_\rho\rho^\nu=a\nu(\nu+b-1)\rho^{\nu-1}\leq -\lambda\rho^\nu$ for some $\lambda>0$. In case (\ref{rho_quadratic}), we choose $0<\nu< \underset{(x,\rho)}{\min} (1-c(x,\rho))$ so that $L_\rho\rho^\nu=a\nu(\nu+c-1)\rho^\nu\leq -\lambda\rho^\nu$ for some $\lambda>0$. 
					
					\paragraph{Case $i\geq 1$} $P_{[k_i, k_{i+1}]}$ is then covered by four kinds of
					disjoint collections of open set $R$ as we analyzed before, namely:
					\begin{enumerate}
						\item[I.] $R$ does not contain any corner or local maximum point, $L_\rho$ is elliptic for $\forall x$ of the form \eqref{l_rho_interior};
						\item[II.] $R$ contains an edge and $\rho$ is constant along this edge, so $L_\rho$ takes the form \eqref{l_rho_edge1}, \eqref{l_rho_edge2};
						\item[III.] $R$ contains a corner that is a strict local maximum point of $\rho$,
						\item[IV.] $R$ contains a corner that is neither a local maximum point nor local minimum point of $\rho$. 
					\end{enumerate}
					At this layer $P_{[k_i, k_{i+1}]}$, $R_{II}$ and $R_{III}$ are both disconnected from the other three types.  $R_I$ may only be connected with $R_{IV}$. There maybe more than one $R$ of type II, but since $L_\rho$ takes a uniform form in these $R_{II}$, in our construction below we may regard these $R_{II}$ as a uniform $R$ and for convenience of notation we still let $x\in [0,1]$. $R_{III}$ will be treated in this way as well. For $R_{I}$, we can merge these $R_I$ as a uniform one as well. If there exists $R_{IV}$, we construct $V$ separately in $R_{IV}$ so that it may be patched with $V$ in $R_I$.
					
					Thus, it suffices to construct $V$ in these four cases separately. Since $k_i>0$ when $i\geq 1$, we assume that  $\rho(p)=1$.
					
					\paragraph{Case I:} Fix an arbitrary $x_0$,  we choose 
					$f(\rho)$ so that
					\begin{gather*}
						f'(\rho)<b(x_0,\rho)-b(x,\rho).
					\end{gather*}
					Starting from $\rho=1$, we let, 
					$V_\rho=\exp(f(\rho)-\int^\rho b(x,z)dz)$, then $\frac{L_\rho V_\rho}{V_\rho}=[f'-b(x_0,\rho)+b(x,\rho)]<0$. 
					\paragraph{Case II:}In the Kimura case \eqref{l_rho_edge1}, we fix $x_0$ so that $c(x_0, 1)=\underset{x\in[0,1]}{\max}\ c(x,1)$. We choose $f(\rho)$ so that
					\begin{gather*}
						f'(\rho)<\frac{c+c(x_0,\rho)-c(x,\rho)}{1-\rho}, c=-c(x_0,1)>0
					\end{gather*}
					on $[1-\epsilon, 1]$. Denote $H^x(\rho)=\text{exp}\left[\int^\rho \frac{c(x,z)}{1-z}dz\right]\sim(1-\rho)^{-c(x,1)}$,
					and let 
					\[V_\rho=\frac{(1-\rho)^ce^{f(\rho)}}{H^{x_0}(\rho)}.\]
					Then, $\frac{L_\rho V_\rho}{V_\rho}=[(1-\rho)f'-c-c(x_0,\rho)+c(x,\rho)]<0$.
					
					In the quadratic case (\ref{l_rho_edge2}), we fix $x_0$ so that $d(x_0, 1)=\underset{x\in[0,1]}{\max}\ d(x,1)$. We choose $f(\rho)$ so that
					\begin{gather*}
						(1-\rho)f'-(d-1)-d(x_0,\rho)+d(x,\rho)<0, d-1=-d(x_0,1)-1>0
					\end{gather*}
					on $[1-\epsilon, 1]$. Denote $H^x(\rho)=\text{exp}\left[\int^\rho \frac{d(x,z)}{1-z}dz\right]\sim(1-\rho)^{-d(x,1)}$, let \[V_\rho=\frac{(1-\rho)^{d-1}e^{f(\rho)}}{H^{x_0}(\rho)}.\] 
					Then, $\frac{L_\rho V_\rho}{V_\rho}=(1-\rho)[(1-\rho)f'-(d-1)-d(x_0,\rho)+d(x,\rho)]<0$.
					
					\paragraph{Case III:}If the corner $p$ is a strict local maximum point, we can extend the coefficients of $L_\rho$ to the rectangle which is supplemented by the dashed lines so that $L_\rho$ is has the same degree of degeneracy at $\rho=\rho(p)$. 
					
					\paragraph{Case IV:}
					In this case, if $L$ has the same degeneracy type towards two edges intersecting at the corner $p$. Let $U$ be a neighborhood of $p$ inside $T$ and introduce new coordinates $(x,z)$ so that $z=\rho$ in $T\setminus U$, $z_\rho>0$ and $p$ attains local maximum of $z$. So in the $(x,z)$-coordinate, $L_z$ has the same kind of degeneracy as $L_\rho$ in an irregular area $T'$. We extend the coefficients of $L_z$ to a rectangle so that $L_z$ takes the form (\ref{l_rho_edge1}), (\ref{l_rho_edge2}). Taking the solutions $V$ to these two forms which we already constructed, we see $V$ is infinity at the corner of two quadratic edges. 
					
					If $L$ has different types of degeneracy towards the two edges, then it takes the form
					\begin{gather}
						L=\left[xa(x,\rho)+(\rho-1)^2\right]\partial_{\rho\rho}+xb(x,\rho)\partial_{xx}+c(x,\rho)\partial_\rho+d(x,y)\partial_x+x(\rho-1)e(x,y)\partial_{x\rho}
					\end{gather}
					on $[0,1]\times [0,1]$, $\rho\in [0,1]$, where $a,b,c,d>0$. 
					
					When $x=0$, $L^0_\rho=(1-\rho)^2\partial_{\rho\rho}+c(x,\rho)\partial_\rho$. Clearly a decreasing linear function $V$ would satisfy $L^0_\rho V<0$. But such $V$ has negative derivative, which is not desired. So our strategy is to first construct $f_0(\rho), f_1(\rho)$ at $x=0,1$ so that 
					\[L^0_\rho f_0, L^1_\rho f_1<0,\ f'_1(\rho)>0.\] 
					We then patch them together with the desired boundary conditions. We choose two non-negative functions $\chi_0(x), \chi_1(x)$ so that
					\begin{gather*}
						\chi_0(0)>0, \chi_0=0\ \text{in}\  [1-\epsilon, 1],\\
						\chi_1(0)=0, \chi_1=\chi(1)\ \text{in}\  [1-\epsilon, 1].
					\end{gather*}
					Suppose that $\underset{(x,\rho)}{\max} -\frac{xb\chi''_1+d\chi'_1}{\chi_1}<M$ for some $M>0$ and let $c_{\min}=\underset{[0,1]\times [0,1]}{\min}c(x,\rho)$.
					We choose another two non-negative functions $h_0(\rho), h_\epsilon(\rho)$ so that
					\begin{gather*}
						h_0(0)=0, h'_0(0), h''_0(0)=0, h_0(\rho)=\frac{c_{min}}{2}-M(1-\rho)\  \text{on}\  [1-\frac{c_{min}}{2M},1] \\
						h_\epsilon\in C_c^2([0,1-\frac{c_{min}}{2M}+\epsilon]), h_\epsilon(0)=0, h'_\epsilon(0)=V'(0), h''_\epsilon(0)=V''(0),\\ L_\rho h_\epsilon<0\ \text{on}\ [0,1-\frac{c_{min}}{2M}].
					\end{gather*}
					Then for $\rho\in[1-\frac{c_{min}}{2M}, 1]$,
					\begin{gather}\label{A}
						\frac{(L_\rho+xe(\rho-1)\chi'_1\partial_\rho)h_0}{h_0}>-\frac{xb\chi''_1+d\chi'_1}{\chi_1}.
					\end{gather}
					Indeed we can rescale $\chi_1$ so that the coefficient of $\partial_\rho$ in $L_\rho+xe(\rho-1)\chi'_1\partial_\rho$ is bigger than $\frac{c_{min}}{2}$, so the left hand side term is no less than $M$, hence \eqref{A} is true.
					
					We let
					\begin{gather*}
						V(x,\rho)=c_0f(x,\rho)h_0(\rho)+h_\epsilon(\rho)\\
						\text{where}\ 
						f(x,\rho)=\chi_0(x)f_0(\rho)+\chi_1(x)f_1(\rho), c_0>0\ \text{is a constant}.
					\end{gather*}
					We want to ensure that $L(fh_0)<0$ on $\rho\in[1-\frac{c_{min}}{2M},1]$. The coefficient of $f_1$ in $L(fh_0)$ is \begin{gather}\label{coefficient}
						(xb\chi''_1+d\chi'_1)h_0+\chi_1(x)L_\rho h_0+xe(x,\rho)(\rho-1)\chi_1'(x)h'_0(\rho)
					\end{gather}
					through a direct computation in\eqref{lf} below, which is positive by \eqref{A}. So we can subtract a positive constant from $f_1$ so that $L(fh_0)<0$ when $\rho\in [1-\frac{c_{min}}{2M},1]$. We compute:
					\begin{align}\label{lf}
						Lf=&\chi_0(x)L\rho f_0+\chi_1(x)L_\rho f_1+xb(x,\rho)\left[\chi''_0 f_0+\chi''_1f_1\right]\\\nonumber
						&+d(x,y)\left[\chi'_0f_0+\chi'_1f_1\right]+x(\rho-1)e(x,y)\left[\chi'_0f'_0+\chi'_1f'_1\right]\\\nonumber
						L(f(x,\rho)h_0(\rho))=&(Lf)\cdot h_0+f\cdot L_\rho h_0+h'_0(\rho)\left[xe(x,\rho)f_x+2(xa(x,\rho)+\rho^2)f_\rho\right]
					\end{align}
					
					Finally for $c_0>0$ small enough, $V$ is positive since $h_\epsilon>0$ and $L_\rho h_\epsilon<0$ for $\rho\in [0, 1-\frac{c_{min}}{2M}+\epsilon]$. We adjust $h_\epsilon$ so that $\underset{\rho\in[1-\frac{c_{min}}{2M},1]}{\max}L_\rho h_\epsilon<\underset{\rho\in[1-\frac{c_{min}}{2M},1]}{\max} -L_\rho (c_0fh_0)$ and $LV<0$. Such $V$ is finite by construction, so $\frac{LV}{V}<0$. $V$ satisfies the boundary conditions at $\rho=0$ and when $x\in [1-\epsilon, 1]$, $V$ only depends on $\rho$, so this $V$ could be extended to a global function in this layer.

					By induction, we construct $V$ layer by layer, and such $V$ is positive on $P\setminus H$ while $V$ is $\infty$ at corners which are the intersection of two quadratic edges and on the quadratic edges where $\rho$ is constant. We have finitely many layers and each layer is covered by finite closed rectangles or irregular areas, and we showed that $\frac{LV}{V}<0$ in each of this area, so $V$
					is a Lyapunov function indeed satisfying $LV\leq\lambda_0V$ for some $\lambda_0<0$.
				\end{proof}
				\begin{remark}
					The Lyapunov function $V$ we constructed above behaves like $\rho^c$ where $\rho$ is the  distance to the tangent edge and $c$ is some positive constant. 
				\end{remark}
				
				$V$ gives rise to a norm $||\cdot||_{P_V}$:
				\begin{gather*}
					||f||_{P_V}:=\underset{P}{\text{sup}}\ \frac{f^+}{V}+\underset{P}{\text{sup}}\ \frac{f^-}{V}.
				\end{gather*}
				and we define the space $C^V(P)$ :
				\begin{gather*}
					C^V(P):=V\cdot C^0(P)
				\end{gather*}
				equipped with the norm $||\cdot||_{P_V}$. By saying $f\succ 0$ if $||f||_{P_V}>0$, let
				\begin{gather*}
					\lambda_{0}:=\underset{\lambda\in\mathbb{R}}{\inf}\ \{Af\leq\lambda f,\  0\prec f\}.
				\end{gather*}
				Clearly, $\lambda_0<0$. A similar argument of Proposition \ref{prop_lambda0} gives rise to the following result:
				\begin{proposition}\label{p_v}
					There exists some $M>0$ so that for $f\in C^V(P)$, $t>0$, 
					\[||P_tf||_{P_V}\leq M||f||_{P_V}e^{\lambda_0t}.\]
				\end{proposition}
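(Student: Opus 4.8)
The plan is to reproduce the argument of Proposition~\ref{prop_lambda0} in the two-dimensional setting. First note that $C^V(P)=V\cdot C^0(P)$, equipped with $\|\cdot\|_{P_V}$, is a Banach lattice: the division map $f\mapsto f/V$ is an isometric lattice isomorphism onto $C^0(P)$, which (since $P$ is compact) is a space of type $C(K)$. Exactly as in the remark following Definition~\ref{c(alpha,beta)} in one dimension, the local heat semigroups underlying the construction of $e^{tL}$ in \cite{chen2022mixed} act on $C^V(P)$, so $e^{tL}$ restricts to a positive $C_0$-semigroup $P_t$ on $C^V(P)$; denote its generator by $A$. As the function playing the role of $u$ in Proposition~\ref{prop_lambda0} I would simply take $u=V$, the Lyapunov function built in Theorem~\ref{thm:lyapunov}: it lies in $C^V(P)$ (as $V=V\cdot 1$), satisfies $0\prec V$, and $AV=LV\le\lambda_0 V$ by \eqref{lv<v}.

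The crucial ingredient is a two-dimensional \textbf{positive minimum principle}: for $B:=A-\lambda_0$ one has, for every $0\preceq f\in D(A)$ and $p\in P$, that $f(p)\sim 0$ implies $(Bf)(p)\succeq 0$. To prove it I would pass to the core $D(P):=V\cdot D^2(P)$, where $D^2(P)$ consists of functions twice differentiable up to the Kimura edges and such that $x\partial_x g$ and $x^2\partial_x^2 g$ (with the evident mixed versions at corners) extend continuously by $0$ to the quadratic edges; $D(P)$ is dense in $C^V(P)$ and invariant under $P_t$ by the mapping properties of $e^{tL}$ in \cite{chen2022mixed}, hence a core by \cite[A-I, Proposition~1.9]{arendt1986one}. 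Since it suffices to verify the principle on $D(P)$, write $f=Vg$ with $g=f/V\in D^2(P)$, $g\ge 0$, $g(p)=0$; then $p$ is a minimum of $g$, so $g$ has vanishing gradient in the free directions, nonnegative inward normal derivatives, and nonnegative Hessian in the free directions there. Using $L(Vg)=g\,LV+V\,Lg+2\sum_{ij}a_{ij}\,\partial_i V\,\partial_j g$ (with $(a_{ij})$ the principal part of $L$) and $g(p)=0$, the sign of $(Bf/V)(p)=(Lg)(p)+(\text{cross terms})(p)$ is then checked point type by point type using the normal forms worked out in the proof of Theorem~\ref{thm:lyapunov}: at an interior point $L_\rho$ is elliptic as in \eqref{l_rho_interior} and all directions are free; at an edge point one distinguishes whether $\rho$ is constant or strictly monotone along the edge, giving \eqref{rho_kimura}--\eqref{rho_quadratic} near $H$ and \eqref{l_rho_edge1}--\eqref{l_rho_edge2} elsewhere; at a corner one uses Cases~III and IV of that proof. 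At the corners where $V\equiv\infty$ (intersections of two quadratic edges, and quadratic edges along which $\rho$ is constant) the hypothesis $f(p)\sim 0$ is automatic for $f\in C^V(P)$, and $(Bf)(p)\succeq 0$ is read off directly from the degenerate normal form of $L$ there, in the spirit of the last part of the proof of Theorem~\ref{thm:lyapunov}.

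Granting the positive minimum principle, the remaining steps are verbatim Proposition~\ref{prop_lambda0}. The strict half-norm $P_V(f)=\sup_P f^+/V$ is well defined on $C^V(P)$, and $\|f\|_p:=P_V(f)+P_V(-f)$ coincides with $\|f\|_{P_V}$. I would show $B=A-\lambda_0$ is $P_V$-dissipative: for $f\preceq 0$ take $\phi_f=0$; for $f\in D(B)$ with $P_V(f)>0$, choose $p$ with $P_V(f)=(f/V)(p)$ and take $\phi_f\in C^V(P)'$ acting by $(g,\phi_f)=(g/V)(p)$, so $(g,\phi_f)\le P_V(g)$ is immediate, while $P_V(f)V-f\succeq 0$ vanishes at $p$, whence the positive minimum principle gives $(Bf)(p)\preceq P_V(f)(BV)(p)\preceq 0$, i.e. $(Bf,\phi_f)\le 0$. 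By the exponential formula $e^{t\overline B}f=\lim_{n\to\infty}\bigl(\tfrac nt R(\tfrac nt,\overline B)\bigr)^n f$, dissipativity gives that $e^{t\overline B}$ is $P_V$-contractive, so $\|e^{t\overline B}f\|_{P_V}\le\|f\|_{P_V}$. Since $P_t=e^{\lambda_0 t}e^{t\overline B}$, we conclude $\|P_tf\|_{P_V}\le e^{\lambda_0 t}\|f\|_{P_V}$, i.e. the proposition with $M=1$. (If one insists on the infimal rate in the definition of $\lambda_0$ preceding the statement, one runs this with any $u\succ 0$ satisfying $Au\le\lambda u$ for $\lambda$ close to that infimum and invokes $s(A)=w_1(A)$ on the Banach lattice $C^V(P)$ as in \cite[Theorem~1.4.1]{van1996asymptotic}.)

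The step I expect to be the main obstacle is the positive minimum principle at corners, especially the irregular-area corners of Case~IV, where two edges of different degeneracy type meet and $V$ may be infinite: there one cannot divide by $V(p)$, so the sign of $(Bf)(p)$ must be extracted from the mixed degenerate/elliptic normal form of $L$ (the operator appearing just before \eqref{A}), keeping careful track of the boundary behavior of $g=f/V$; this requires a local analysis genuinely more delicate than the one needed for the one-dimensional Lemma~\ref{positive minimum principle}. A secondary technical point is confirming that the tentative core $V\cdot D^2(P)$ is indeed invariant under $e^{tL}$, which relies on the regularity and mapping properties of the two-dimensional semigroup established in \cite{chen2022mixed}.
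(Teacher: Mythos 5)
Your proposal is correct and follows essentially the paper's own route: the paper proves Proposition \ref{p_v} only by asserting that ``a similar argument to Proposition \ref{prop_lambda0}'' applies on the Banach lattice $C^V(P)$ with the Lyapunov function $V$ of Theorem \ref{thm:lyapunov} playing the role of $u$, which is exactly the adaptation you carry out (positive minimum principle on a core, $P_V$-dissipativity of $A-\lambda_0$, contractivity via the exponential formula). If anything, your write-up supplies more detail than the paper does, including the delicate points at corners and the core invariance, which the paper leaves implicit.
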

				
				Fix a point $p\in P$ neither on $H$ nor any quadratic edge, we denote $p_t(p,\cdot)$ the transition probability of the diffusion starting from $p$. A corollary is 
				\begin{corollary}\label{coro_p}
					For $f\in C^V(P), t>0$
					\begin{gather}\label{f_pv}
						|P_tf(p)|\leq V(p)||P_tf||_{P_V}\leq MV(p)||f||_{P_V}e^{\lambda_0t}.
					\end{gather}
				\end{corollary}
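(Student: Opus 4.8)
The plan is to deduce the corollary directly from Proposition \ref{p_v} by unwinding the definition of the norm $\|\cdot\|_{P_V}$, so the argument is essentially a bookkeeping step rather than a new estimate.

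First I would record the elementary pointwise bound: for any $g\in C^V(P)$ and any point $q\in P$ at which $0<V(q)<\infty$, one has $|g(q)|\le V(q)\,\|g\|_{P_V}$. Indeed, at such a $q$ either $g(q)=g^+(q)\ge 0$ or $g(q)=-g^-(q)$ with $g^-(q)>0$, so in either case $|g(q)|/V(q)$ is bounded above by $\underset{P}{\sup}\,g^+/V+\underset{P}{\sup}\,g^-/V=\|g\|_{P_V}$. This is where the hypothesis on the base point $p$ enters: by construction $V$ vanishes only on $H$ and is infinite only on the quadratic edges along which $\rho$ is constant and at corners joining two quadratic edges, so the assumption that $p$ lies on none of these guarantees $0<V(p)<\infty$, and the bound applies at $q=p$.

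Next I would apply this with $g=\mathcal{P}_tf$. By Proposition \ref{p_v}, $\mathcal{P}_tf\in C^V(P)$ with $\|\mathcal{P}_tf\|_{P_V}\le M\|f\|_{P_V}e^{\lambda_0 t}$ for the constant $M>0$ and the (negative) rate $\lambda_0$ furnished there. Combining the pointwise bound at $p$ with this estimate yields $|\mathcal{P}_tf(p)|\le V(p)\,\|\mathcal{P}_tf\|_{P_V}\le M\,V(p)\,\|f\|_{P_V}e^{\lambda_0 t}$, which is exactly \eqref{f_pv}.

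There is essentially no obstacle here: the analytic content is entirely contained in Proposition \ref{p_v} (itself obtained from the positive-minimum-principle and $\lambda_0$ machinery behind Proposition \ref{prop_lambda0}, transferred to the space $C^V(P)$), and the corollary merely extracts a pointwise consequence. The only point demanding genuine care — since $V$ is unbounded near quadratic edges and zero on $H$ — is checking $V(p)\in(0,\infty)$, which is precisely why $p$ is required to avoid $H$ and every quadratic edge.
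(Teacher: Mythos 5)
Your argument is correct and is exactly the (implicit) one the paper intends: the first inequality in \eqref{f_pv} is just the pointwise consequence of the definition of $\|\cdot\|_{P_V}$ at a point $p$ where $0<V(p)<\infty$ (guaranteed since $p$ is neither on $H$ nor on a quadratic edge), and the second is Proposition \ref{p_v} applied to $\mathcal{P}_tf$. The paper states the corollary without proof precisely because it is this immediate bookkeeping step, so there is nothing to add.
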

				
				\begin{remark} \label{rem:topological}
					Consider the example in \cite{topological}: on the triangle $T=\{(x,y):0\leq x,y,x+y\leq 1\}$,
					\begin{gather*}
						L=\gamma_{12}\left[xy(\partial_x-\partial_y)^2+(y-x)(\partial_x-\partial_y)\right]\\
						+\gamma_{23}\left[y(x\partial_x+(y-1)\partial_y)^2+(y-1)(x\partial_x+(y-1)\partial_y)\right]\\
						+\gamma_{13}\left[x((x-1)\partial_x+y\partial_y)^2+(x-1)((x-1)\partial_x+y\partial_y)\right].
					\end{gather*}
					In this case $1-x-y=0$ is the only tangent edge. $V=1-x-y$, which measures the scaled distance between $(x,y)$ and the tangent edge, is a Lyapunov function since 
					\begin{align*}
						LV=-(1-x)\gamma_{13} + (1-y) \gamma_{23})V\leq -\min(\gamma_{13},\gamma_{23})V.
					\end{align*}
				\end{remark}
				
				\subsection{Exponential convergence to invariant measure}
				First, there exists a tubular neighborhood of $H$ that is diffeomorphic to $U=[0,1]\times [0,1]$ with $H$ mapped to $[0,1]\times\{0\}$. We fix a non-negative function $h\in C_c(U)$ satisfying $h|_H\equiv 1, h|_{y=1}\equiv 0$. Then we integrate the transition probability $p_t(p,x,y)$ with respect to $h$ in $U$:
				\begin{gather}\label{weighted_prob}
					q_h(t,x):=\int_0^1 p_t(p,x,y)h(y)dy.
				\end{gather}
				It can be interpreted as the marginal density with respect to coordinate along $H$ ($x$) and limited to some neighbourhood of $H$ (specified by $h$).
				
				We already know that there exists a unique invariant measure $\mu_0$ supported on $H$. The total variation distance between $q_h(t,x)$ and $\mu_0(x)$ can be bounded by $\chi^2$-divergence or Kullback-Leibler (KL) divergence. To show the exponential decay of the  $\chi^2$-divergence or the Kullback-Leibler(KL), convergence requires a {Poincar\'e} inequality or a logarithmic Sobolev inequality, respectively, for $\mu_0$. It turns out that if $L|_H$ has a Kimura endpoint, then the $\chi^2$-divergence is not finite. And if $L|_H$ has a quadratic endpoint, then $\mu_0$ does not satisfy the logarithmic Sobolev inequality. So in the proof of the following theorem, we employ an  interpolation divergence as constructed in \cite[Remark 3.2(3)]{10.1214/07-AIHP152}. 
				
				For $1<r\leq 2$, define
				\begin{gather}
					\psi(u)=3+\left(\frac{5}{2}-\frac{3}{r-1}\right)(u-3)+\frac{9}{r(r-1)}\left[\left(\frac{u}{3}\right)^r-1\right], u\geq 0.
				\end{gather}
				Such $\psi$ is convex, $\psi(1)=0$ and $\psi(u)\sim \frac{9}{r(r-1)3^r}u^r$ at $+\infty$.
				\begin{lemma}\cite[Lemma 1.1]{10.1214/07-AIHP152}\label{total_variation_bound}
					There exists a constant $C_\psi>$ such that for two probability measures $P, Q$, 
					\begin{gather*}
						||P-Q||_{TV}\leq C_\psi\sqrt{\int \psi\left(\frac{dP}{dQ}\right)dQ}.
					\end{gather*}
					If $\mu$ satisfies a Poincar\'e inequality, then for $h\geq 0$ satisfying $\int h\mu=1$, we have that $\mu$ satisfies the $I_\psi$-inequality:
					\begin{gather}
						\int\psi(h)\mu\leq C_\psi\int\psi''(h)|\partial_xh|^2\mu.
					\end{gather}
				\end{lemma}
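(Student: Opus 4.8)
We prove the two assertions separately, following \cite{10.1214/07-AIHP152}.

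\textbf{The total variation bound.} We may assume $P\ll Q$ (otherwise the right-hand side is read as $+\infty$), set $g=dP/dQ\ge 0$, so that $\int g\,dQ=1$ and $\|P-Q\|_{TV}=\tfrac12\int|g-1|\,dQ$. The first move is to replace $\psi$ by its tangent correction $\widetilde\psi(u):=\psi(u)-\psi'(1)(u-1)$: since $\int(g-1)\,dQ=0$ we have $\int\widetilde\psi(g)\,dQ=\int\psi(g)\,dQ=:J$, while $\widetilde\psi$ is still convex with $\widetilde\psi(1)=\widetilde\psi'(1)=0$ and $\widetilde\psi''=\psi''>0$, hence $\widetilde\psi\ge 0$ with a nondegenerate quadratic minimum at $u=1$. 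I would then split the integral at $g=2$. On $\{g\le 2\}$ the ratio $(u-1)^2/\widetilde\psi(u)$ is continuous on $[0,2]$ (finite limit $2/\widetilde\psi''(1)$ at $u=1$ by a Taylor expansion), hence bounded by some $C_1$, so Cauchy--Schwarz gives $\int_{\{g\le2\}}|g-1|\,dQ\le Q(g\le2)^{1/2}\big(C_1 J\big)^{1/2}\le\sqrt{C_1 J}$. On $\{g>2\}$ the superlinear growth of $\widetilde\psi$ (it behaves like $u^r$ with $r>1$ at infinity) gives, for each fixed $\varepsilon>0$, a constant $K_\varepsilon\ge 2$ with $u-1\le\varepsilon\widetilde\psi(u)$ for $u\ge K_\varepsilon$; combined with Markov's inequality $Q(g>2)\le\widetilde\psi(2)^{-1}J$ (here $\widetilde\psi$ is increasing on $[2,\infty)$ and $\widetilde\psi(2)>0$) this yields $\int_{\{g>2\}}|g-1|\,dQ\le\big(K_\varepsilon\widetilde\psi(2)^{-1}+\varepsilon\big)J$. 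Adding the two pieces, $\|P-Q\|_{TV}\le C(\sqrt J+J)$; if $J\le 1$ this is $\le 2C\sqrt J$, and if $J>1$ then $\|P-Q\|_{TV}\le 1\le\sqrt J$. This gives the first inequality with $C_\psi=2C\vee 1$.

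\textbf{Reduction of the $I_\psi$-inequality to a Beckner inequality.} Fix $h\ge 0$ with $\int h\,d\mu=1$. As above it suffices to prove the inequality with $\psi$ replaced by $\widetilde\psi$ (the integrand $\psi(h)$ changes only by an affine term of zero $\mu$-mean, and $\psi''$ is unchanged). From the definition, $\widetilde\psi(u)=c_r u^r+(\text{affine in }u)$ with $c_r=3^{2-r}/(r(r-1))>0$, and $\widetilde\psi(1)=0$ forces $\int\widetilde\psi(h)\,d\mu=c_r\big(\int h^r\,d\mu-1\big)$, while $\psi''(u)=(u/3)^{r-2}=3^{2-r}u^{r-2}$. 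Setting $g:=h^{r/2}$ we have $\int g^{2/r}\,d\mu=\int h\,d\mu=1$, and since $|\nabla g|^2=\tfrac{r^2}{4}h^{r-2}|\nabla h|^2$,
\[
\int\widetilde\psi(h)\,d\mu=c_r\Big(\int g^2\,d\mu-\Big(\int g^{2/r}\,d\mu\Big)^{r}\Big),\qquad \int\psi''(h)\,|\nabla h|^2\,d\mu=\frac{4\cdot 3^{2-r}}{r^2}\int|\nabla g|^2\,d\mu .
\]
Hence the claimed $I_\psi$-inequality is \emph{equivalent} to the Beckner inequality with exponent $q:=2/r\in[1,2)$,
\[
\int g^2\,d\mu-\Big(\int g^{q}\,d\mu\Big)^{2/q}\le C'\int|\nabla g|^2\,d\mu,\qquad C'=\tfrac{4(r-1)}{r}\,C_\psi .
\]

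\textbf{Poincar\'e implies Beckner$(q)$ for a fixed $q<2$.} It remains to deduce this Beckner inequality (with $C'$ depending only on the Poincar\'e constant of $\mu$ and on $r$) from the Poincar\'e inequality for $\mu$. This is the classical implication ``Poincar\'e $\Rightarrow$ Lata\l a--Oleszkiewicz with parameter $\beta=2$''. One writes, for $g\ge 0$,
\[
\int g^2\,d\mu-\Big(\int g^q\,d\mu\Big)^{2/q}=\int_q^2\Big(-\frac{d}{ds}\Big(\int g^s\,d\mu\Big)^{2/s}\Big)\,ds ,
\]
which is legitimate because $s\mapsto\big(\int g^s\,d\mu\big)^{1/s}$ is nondecreasing, and bounds the $s$-derivative by $C\int|\nabla g|^2\,d\mu$ by applying the Poincar\'e inequality to $g^{s/2}$: this controls $\mathrm{Var}_\mu\!\big(g^{s/2}\big)$, hence the combination of $\int g^s\log g\,d\mu$ and $\int g^s\,d\mu$ that appears in $\frac{d}{ds}\big(\int g^s\,d\mu\big)^{2/s}$. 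Integrating over $s\in[q,2]$ produces the Beckner inequality with $C'\lesssim(2-q)\,C_P$, which is even stronger than what we need; alternatively, one compares the Maz'ya capacity characterizations of the $I_\psi$-inequality and of the Poincar\'e inequality and notes that, because $q<2$, the two capacity conditions agree up to multiplicative constants (cf. the use of \cite{bakry2008rate} elsewhere in this paper). Combining with the previous paragraph gives the $I_\psi$-inequality. The first two steps are essentially bookkeeping; the genuine obstacle is this last step, i.e.\ controlling $\frac{d}{ds}\big(\int g^s\,d\mu\big)^{2/s}$ by the Dirichlet form uniformly for $s\in[q,2]$ — equivalently, carrying out the Rothaus-type interpolation between Poincar\'e and logarithmic Sobolev — and this is exactly where the restriction $r>1$ (equivalently $q<2$) is used, the constant blowing up as $q\uparrow 2$.
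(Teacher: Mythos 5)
The paper offers no proof of this lemma at all: it is imported verbatim from the cited reference, so your reconstruction has to stand on its own. Its first two parts do. The total-variation bound is correct — the tangent correction $\widetilde\psi$, the Cauchy--Schwarz estimate on $\{g\le 2\}$ using the nondegenerate quadratic minimum of $\widetilde\psi$ at $1$, the Markov/superlinearity estimate on $\{g>2\}$, and the dichotomy $J\le 1$ versus $J>1$ all work. The reduction of the $I_\psi$-inequality to a Beckner-type inequality for $g=h^{r/2}$, $q=2/r\in[1,2)$, is also correct: the identity $\int\widetilde\psi(h)\,d\mu=c_r\bigl(\int h^r d\mu-1\bigr)$ uses only $\psi(1)=0$ and $\int h\,d\mu=1$, and $\psi''(u)=(u/3)^{r-2}$ gives the stated Dirichlet-form identity.

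The genuine flaw is in your last step. The derivative you propose to control satisfies $\frac{d}{ds}\bigl(\int g^s d\mu\bigr)^{2/s}=\frac{2}{s^2}\bigl(\int g^s d\mu\bigr)^{\frac{2}{s}-1}\mathrm{Ent}_\mu(g^s)$: it is an entropy, not a variance, and applying Poincar\'e to $g^{s/2}$ only controls $\mathrm{Var}_\mu(g^{s/2})$, which does not dominate $\mathrm{Ent}_\mu(g^s)$ — that is precisely the gap between Poincar\'e and log-Sobolev. Accordingly, the asserted conclusion $C'\lesssim(2-q)C_P$ cannot follow from Poincar\'e alone: letting $q\uparrow 2$ in such a bound would yield a logarithmic Sobolev inequality, which Poincar\'e does not imply (and which the paper explicitly says can fail here when $H$ has a quadratic endpoint); it also contradicts your own closing remark that the constant blows up as $q\uparrow 2$. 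There is additionally a sign slip: the deficit equals $\int_q^2\frac{d}{ds}\bigl(\int g^s d\mu\bigr)^{2/s}ds$ without the minus sign, since that function of $s$ is nondecreasing. Fortunately, the step you call ``the genuine obstacle'' is in fact immediate: for $g\ge 0$ and $1\le q<2$, monotonicity of $L^p$-norms gives $\bigl(\int g^q d\mu\bigr)^{2/q}\ge\bigl(\int g\,d\mu\bigr)^2$, hence $\int g^2 d\mu-\bigl(\int g^q d\mu\bigr)^{2/q}\le\mathrm{Var}_\mu(g)\le C_P\int|\nabla g|^2 d\mu$, and plugging this into your reduction gives the $I_\psi$-inequality with a constant depending only on $C_P$ and $r$. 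With that replacement (or with the capacity-criterion comparison you allude to, made precise), your argument is complete.
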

				\begin{proposition}
					Assume for some $0<A<1$ that
					\begin{gather}\label{h(y)}
						h\geq 0, \ h|_{H\times [0,A]}\equiv  1\ \text{for}\ y\in [0, 1], \ h|_{y=1}, h'|_{y=1}\equiv 0.  
					\end{gather} 
					Fix a point $p$ that is not on any quadratic transverse edge, then the total variation between the $h$-weighted transition probability $q_h(t,x)$ defined in \eqref{weighted_prob} and the invariant measure $\mu_0$ has exponential decay:
					\begin{gather*}
						||q_h(t,x)-\mu_0||_{TV}\leq Me^{-\lambda t},
					\end{gather*}
					where $\lambda=\min\frac{1}{2}(\frac{1}{C_\psi}-\epsilon, -(1-\epsilon)\lambda_0)$  $\forall\epsilon>0$.
				\end{proposition}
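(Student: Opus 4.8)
The approach is to decouple the long-time behaviour into a ``motion transverse to the tangent edge $H$'', governed by the Lyapunov function $V$ of Theorem~\ref{thm:lyapunov} and controlled by Corollary~\ref{coro_p}, and a ``motion along $H$'', governed by the one-dimensional operator $L|_H$, whose unique normalized invariant measure $\mu_0$ satisfies a Poincar\'e inequality by Theorem~\ref{q_t:l^2} (applicable since $L|_H$ is transverse at both endpoints, Assumption~\ref{assp:onetangentedge}). I would first recast the statement in dual form: writing $(\phi\otimes h)(x,y):=\phi(x)h(y)$ (extended by zero outside the tubular neighbourhood), one has $\int_0^1 q_h(t,x)\phi(x)\,dx=P_t(\phi\otimes h)(p)$, hence $\|q_h(t,\cdot)-\mu_0\|_{TV}=\sup_{\|\phi\|_\infty\le1}|P_t(\phi\otimes h)(p)-\int\phi\,d\mu_0|$. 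Two cheap reductions: taking $\phi\equiv 1$ and using that $1-h$ vanishes identically near $H$ (because $h\equiv 1$ on $H\times[0,A]$), so $1-h\in C^V(P)$, gives $|1-\int_0^1q_h(t,x)\,dx|=|P_t(1-h)(p)|\le M\,V(p)\,e^{\lambda_0 t}$ by Corollary~\ref{coro_p}; and replacing $\phi$ by $\phi-\int\phi\,d\mu_0$ lets me assume $\int\phi\,d\mu_0=0$. The hypothesis that $p$ avoids the quadratic transverse edges is precisely what makes $V(p)<\infty$ (on such edges $V\equiv\infty$ and, consistently, the diffusion started there never reaches $H$, so the conclusion would fail).

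Next I would show that $q_h(t,\cdot)$ satisfies, on $H\cong[0,1]$, the one-dimensional Fokker--Planck equation of $L|_H$ up to a source of Lyapunov size. Differentiating $q_h$ in $t$, substituting $\partial_t p_t=L^*p_t$, and integrating by parts in $y$ only, the boundary terms at $y=1$ vanish because $h|_{y=1}=h'|_{y=1}=0$, and those at $y=0$ vanish because the normal second-order coefficient ($\propto y^m$) and the normal drift ($\propto y^{m-1}$, and $=0$ on $H$ as $H$ is tangent) both vanish there, while the cross-term in the adapted normal form produces no surviving normal derivative; the upshot is $\partial_t q_h=(L|_H)^*q_h+\mathcal E(t,\cdot)$ with $\mathcal E(t,x)=\int_0^1 p_t(p,x,y)(Lh)(x,y)\,dy$. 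Since $h\equiv 1$ near $H$, $Lh\equiv0$ there, so $\mathcal E(t,\cdot)$ is supported in $y$ on a compact set bounded away from $H$, and therefore $\|\mathcal E(t,\cdot)\|_{L^1(dx)}\le C\,\big(\inf_{\text{supp}\,Lh}V\big)^{-1}\,P_tV(p)\le C'\,V(p)\,e^{\lambda_0 t}$ by Corollary~\ref{coro_p} (using $\|V\|_{P_V}=1$). Duhamel's formula then gives $q_h(t,\cdot)=(\mathcal Q^H_t)^*q_h(0,\cdot)+\int_0^t(\mathcal Q^H_{t-s})^*\mathcal E(s,\cdot)\,ds$, where $\mathcal Q^H_t=e^{tL|_H}$, and the total mass on the right is $1-O(V(p)e^{\lambda_0 t})$, as it must be.

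Now split $t=t_1+t_2$. Because $(\mathcal Q^H_s)^*$ is a $TV$-contraction, the term $\int_{t_1}^{t}(\mathcal Q^H_{t-s})^*\mathcal E(s,\cdot)\,ds$ has $TV$-norm $\le\int_{t_1}^\infty C' V(p)e^{\lambda_0 s}\,ds\lesssim V(p)e^{\lambda_0 t_1}$, while the remaining part reassembles as $(\mathcal Q^H_{t_2})^*q_h(t_1,\cdot)$. The one-dimensional semigroup is regularizing, so for each fixed $t_1>0$ the normalized probability measure $\bar q_h(t_1,\cdot)$ has density with respect to $\mu_0$ with finite $\psi$-divergence $I_\psi(t_1)$, growing at most polynomially in $t_1$. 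By Theorem~\ref{q_t:l^2} and Lemma~\ref{total_variation_bound}, $\mu_0$ satisfies the $I_\psi$-inequality for the interpolation index $\psi$ (needed here because a Kimura transverse endpoint of $L|_H$ makes the $\chi^2$-divergence infinite and a quadratic transverse endpoint rules out log-Sobolev), and the standard Gronwall argument using self-adjointness of $L|_H$ in $L^2(\mu_0)$ gives $\int\psi\big(d((\mathcal Q^H_{t_2})^*\bar q_h(t_1,\cdot))/d\mu_0\big)d\mu_0\le e^{-t_2/C_\psi}I_\psi(t_1)$, hence $\|(\mathcal Q^H_{t_2})^*\bar q_h(t_1,\cdot)-\mu_0\|_{TV}\le C_\psi\sqrt{I_\psi(t_1)}\,e^{-t_2/(2C_\psi)}$. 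Collecting the pieces, $\|q_h(t,\cdot)-\mu_0\|_{TV}\lesssim V(p)\,e^{\lambda_0 t_1}+\sqrt{I_\psi(t_1)}\,e^{-t_2/(2C_\psi)}$; taking $t_1=\theta t$ and letting $\theta$ tend to $1$ or to $0$ balances the two exponents and yields the rate $\lambda=\tfrac12\min\big(C_\psi^{-1}-\epsilon,\,-(1-\epsilon)\lambda_0\big)$ for every $\epsilon>0$, the $\epsilon$ absorbing both the sub-optimal split and the $t_1$-growth of $I_\psi(t_1)$.

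I expect the main obstacle to be the second step: verifying that $q_h$ really evolves by $(L|_H)^*$ modulo a source of size $O(V(p)e^{\lambda_0 t})$. Concretely one must check that in the adapted coordinates near $H$ no normal-derivative term of $L$ survives upon restriction — combining the tangency of $H$ with the precise form of the cross-term coefficient, and doing so also at the two transverse faces meeting $H$ at its endpoints — and one must bound the transition-layer source $\mathcal E(t,\cdot)$ solely through the Lyapunov-controlled quantity $P_tV(p)$. The secondary technical point is the mismatch between the $L^1(dx)$ control natural for the transition density and the $\psi$-divergence-with-respect-to-$\mu_0$ control demanded by the one-dimensional inequality (recall $\mu_0$ degenerates at the transverse endpoints); this is exactly what the smoothing of $\mathcal Q^H$ repairs, at the cost of the $\epsilon$-loss. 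The remaining ingredients — the dual reformulation, the $TV$-contractivity of $(\mathcal Q^H_s)^*$, and the $\psi$-divergence decay — are routine given Corollary~\ref{coro_p}, Theorem~\ref{q_t:l^2} and Lemma~\ref{total_variation_bound}.
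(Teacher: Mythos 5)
Your overall plan (reduce to a one--dimensional evolution along $H$, control the transverse escape by the Lyapunov function via Corollary~\ref{coro_p}, and conclude with the interpolation $\psi$-divergence of Lemma~\ref{total_variation_bound}) is the same as the paper's, but the step you yourself identify as the main obstacle contains a genuine error. The identity $\partial_t q_h=(L|_H)^*q_h+\mathcal{E}$ with $\mathcal{E}(t,x)=\int_0^1 p_t(p,x,y)(Lh)(x,y)\,dy$ supported away from $H$ is false. Testing against $\phi(x)$ and using the normal form $L=a(x,y)\partial_{xx}+b(x,y)y^{m}\partial_{yy}+c(x,y)y\partial_{xy}+d(x,y)\partial_x+e(x,y)y^{m-1}\partial_y$ near $H$, one finds
$\int \partial_t q_h\,\phi\,dx=\int\!\int p_t\,h\,\bigl[a(x,y)\phi''+d(x,y)\phi'\bigr]dy\,dx+\int\!\int p_t\,c(x,y)\,y\,h'(y)\,\phi'\,dy\,dx+\int\phi\,\mathcal{E}\,dx$,
and the first two groups differ from $\int (L|_H\phi)\,q_h\,dx$ by terms involving $a(x,y)-a(x,0)$, $d(x,y)-d(x,0)$ and the cross-term contribution $c(x,y)y h'(y)$. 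These discrepancies live throughout the tubular neighbourhood (not only where $Lh\neq 0$), are only $O(\epsilon)$ small after shrinking the neighbourhood, and do not decay in time at all, let alone like $e^{\lambda_0 t}$. Hence the exact Duhamel representation with the semigroup $\mathcal Q^H=e^{tL|_H}$ is unavailable, and the time-splitting/TV-contraction argument built on it collapses. What actually holds, and what the paper uses, is $\partial_t q_h=L^*_t q_h+v(t,x)$ with a \emph{time-dependent} effective operator $L^*_t q_h=\partial_{xx}(B(t,x)q_h)-\partial_x(A(t,x)q_h)$ whose coefficients are $y$-averages of $a,c,d$ against $p_t h$ resp. $p_t h'$; the coefficient error is forced to be $\leq\epsilon$ by \eqref{A(t,x)} (after the nontrivial normalization $c(x,1)=0$ of \eqref{wlog:c=0}), and in the entropy computation this perturbation (the term $II$) must be absorbed into the dissipation $\int\psi''(f)(\partial_xf)^2\mu_0$. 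That absorption is exactly the origin of the $\epsilon$ in the stated rate; in your scheme the $\epsilon$ has no source.

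A second gap: even granting your decomposition, the factor $\sqrt{I_\psi(t_1)}$ is not controlled. Since $\mu_0$ degenerates at the transverse endpoints of $H$, finiteness (let alone at most polynomial growth in $t_1$) of $\int\psi\bigl(q_h(t_1,\cdot)/\mu_0\bigr)\mu_0$ does not follow from generic smoothing of the one-dimensional semigroup; this is precisely the mismatch between $L^1(dx)$ control of the source and $\psi$-divergence control relative to a degenerate $\mu_0$ that you flag but do not resolve. The paper's substitute is the pointwise Gaussian-type bound on $p_t(p,q)$ of Lemma~\ref{pointwise_bound}, needed to estimate the pairing $\int\psi'(f)v(t,x)\,dx$ (the bound \eqref{TVthm_III}), where the weight $\mu_0^{-1}$ blows up at the endpoints. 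Your preliminary reductions — the dual formulation, the mass estimate via $1-h\in C^V(P)$ and Corollary~\ref{coro_p}, the identification of $Lh$ as supported in $y\in[A,1]$, and the final use of Lemma~\ref{total_variation_bound} — are correct and consistent with the paper, but without the correct evolution equation for $q_h$ and a proof of the $\psi$-divergence estimates the argument does not close.
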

				\begin{proof}
					Three are three types of boundary of $L|_H$. We first take a coordinate change at quadratic endpoints. By abuse of notation, we still denote $x$ the coordinate along $H$ and $U$ is mapped to $I\times [0,1]$.
					
					On $I\times [0,1]$, $L$ takes the form
					\begin{gather}
						L=a(x,y)\partial_{xx}+b(x,y)y^{m}\partial_{yy}+c(x,y)y\partial_{xy}+d(x,y)\partial_x+e(x,y)^{m-1}\partial_y,\ \  m\in\{1,2\}.
					\end{gather}
					The proofs for the cases $H$
					being a quadratic edge or Kimura edge are essentially the same. So in the following proof, we assume that $m=2$. 
					In addition, we will prove below that in an appropriate set of variables, we may choose
					\begin{align}\label{wlog:c=0}
						c(x,1)=0.
					\end{align}

					Regarding $q_h(t,x)$ as the marginal density limited to neighbourhood of $H$, its total measure approaches 1 at an exponential rate of $\lambda_0$. Indeed 
					\begin{align}\label{q(t,x)}
						\int_I q_h(t,x)dx&=\int_I\int_0^1p_t(p,x,y)h(y)dxdy=1-\int_I\int_0^1p_t(p,x,y)[1-h(y)]dxdy\\
						&\geq 1-Me^{\lambda_0 t}\nonumber
					\end{align} for some $M>0$. Moreover, it satisfies the backwards equation:
					\begin{gather}
						\partial_tq(t,x)=\int_0^1L^*p_t(p,x,y)h(y)dy=L_t^*q_h(t,x)+v(t,x)
					\end{gather}
					where by integrating by parts
					\begin{align*}
						L_t^*q(t,x)=&\partial_{xx}\int_0^1 a(x,y)p_t(p,x,y)h(y)dy-\partial_x\int_0^1c(x,y)yp_t(p,x,y)h'(y)dy\\
						&-\partial_x\int_0^1d(x,y)p_t(p,x,y)h(y)dy\\
						=:&\partial_{xx}(B(t,x)q_h(t,x))-\partial_x(A(t,x)q_h(t,x))
					\end{align*}
					\begin{gather}\label{def:v(t,x)}
						v(t,x)=\int_0^1 b(x,y)y^2p_t(p,x,y)h''(y)dy+\int_0^1e(x,y)yp_t(p,x,y))h'(y)dy.
					\end{gather}
					We make a few remarks on these terms.
					First, $||v(t,x)||_1$ and  $||[A(t,x)-d(x,0)]q_h(t,x)||_1$ decay exponentially at the rate $\lambda_0$.
					Second, $c(x,1)=0$ ensures that $\frac{c(x,y)yh'(y)}{h(y)}$ is bounded, so that by choosing $U$ sufficient small we may assume that
					\begin{gather}\label{A(t,x)}
						\big|\partial_x[B(t,x)-a(x,0)]\big|+\big|\frac{\mu'_0}{\mu_0}(B(t,x)-a(x,0))\big|+\big|(A(t,x)-d(x,0))\big|\leq\epsilon.
					\end{gather}

					We are now ready to estimate $||q_h(t,x)-\mu_0(x)||_{TV}$. Let $f(t,x)=\frac{q_h(t,x)}{\mu_0(x)}$.
					Then by Lemma \ref{total_variation_bound}, 
					\begin{gather}\label{total_variance}
						||q_h(t,x)-\mu_0(x)||_{TV}\leq \frac{C_\psi}{P(t)}\sqrt{\int \psi(f(t,x))\mu}+Me^{-\lambda_0 t},
					\end{gather}
					where $\mathbb{P}(t)=||q_h(t,x)||_1$.
					We differentiate the term
					\begin{align*}
						\frac{d}{dt}(\psi(f(t,x)),\mu_0)=&(\psi'(f),\partial_tq_h(t,x))\\
						=&(L_0\psi'(f),f\mu_0)+((L_t-L_0)\psi'(f),f\mu_0)+(\psi'(f),v(t,x))\\
						=:&-\int_I\psi''(f)(\partial_xf)^2\mu_0+II+III.
					\end{align*}
					We will show below that 
					\begin{gather}
						\label{TVthm_II}
						II\leq 2\epsilon\int_I\psi''(f)(\partial_xf)^2\mu_0+\epsilon\int_I\psi(f)\mu_0dx+Me^{\lambda_0t}\\
						\label{TVthm_III}
						III\leq\epsilon\int_I\psi(f)\mu_0dx+Me^{(1-\epsilon)\lambda_0t}.
					\end{gather}
					Therefore, we have
					\begin{gather*}
						\frac{d}{dt}(\psi(f(t,x)),\mu_0)\leq -(1-3\epsilon)\int_I\psi''(f)(\partial_xf)^2\mu_0+2\epsilon\int_I\psi(f)\mu_0+Me^{\lambda_0t}\\
						\leq -\left[\frac{1-3\epsilon}{C_\psi}-2\epsilon\right]\int_I\psi(f)\mu_0+Me^{\lambda_0t}.
					\end{gather*}
					As a consequence,
					\begin{gather}\label{pinsker_inq}
						\int_I\psi(f(t,x))\mu_0\leq Me^{-at}
					\end{gather}
					where $a=\min(\frac{1-2\epsilon}{C_\psi}-2\epsilon, -\lambda_0)$. Finally combined with \eqref{total_variance}, we obtain that
					\begin{gather*}
						||q_h(t,x)-\mu_0(x)||_{TV}\leq Me^{-\frac{a}{2}t},
					\end{gather*}
					for some constant $M>0$.
					This only leaves the proof of \eqref{TVthm_II}, \eqref{TVthm_III}, and \eqref{wlog:c=0}. We leave the proof of \eqref{TVthm_III} to Lemma \ref{pointwise_bound} in the Appendix.
					\paragraph{Proof of \eqref{TVthm_II}} To see \eqref{TVthm_II}, we integrate by parts to see
					\begin{gather*}
						II=-(\psi''(f)(\partial_xf)^2, (B(t,x)-a(x,0))\mu_0)-(C(t,x)\partial_x\psi'(f), f\mu_0)=:II_1+II_2
					\end{gather*}
					where
					\[C(t,x)=\partial_x[B(t,x)-a(x,0)]+\frac{\mu'_0}{\mu_0}(B(t,x)-a(x,0))-(A(t,x)-d(x,0)).\]
					It is easy to see that 
					\begin{gather}\label{II}
						II_1\leq\epsilon\int_I\psi''(f)(\partial_xf)^2\mu_0
					\end{gather}
					since by \eqref{A(t,x)}
					$|B(t,x)-a(x,0)|\leq\epsilon$. We use H\"older and Cauchy-Schwarz inequalities to bound
					\begin{gather*}
						|II_2|\leq(C(t,x)(\psi''(f))^2(\partial_xf)^2|f|^{2-p},\mu_0)+(C(t,x)|f|^p,\mu_0)=:II_{21}+II_{22}.
					\end{gather*}
					Since $\psi''(f)|f|^{2-p}$ is bounded by some constant independent of $\epsilon$,  by rescaling $\epsilon'=\frac{\epsilon}{F}$ in \eqref{A(t,x)} we obtain
					\begin{gather}
						II_{21}\leq\epsilon\int_I\psi''(f)(\partial_x f)^2\mu_0.
					\end{gather}
					We split $II_{22}$ into two parts
					\begin{gather*}
						\int_{f\leq 1} C(t,x)|f|^p\mu_0+\int_{f\geq 1} C(t,x)|f|^p\mu_0.
					\end{gather*}
					The first term decreases exponentially as $e^{\lambda_0t}$ using $|f|^p\leq f$ and that $(C(t,x),q_h(t,x))$ decreases exponentially. The second term is bounded by $\int\psi(f)\mu_0$ multiplied by some constant. To see this, when $0\leq f\leq 3$, because of the convexity of $\psi$,
					\[\psi(f)-\psi(1)-\psi'(1)(f-1)\geq\frac{1}{2}\underset{0\leq f\leq 3}{\min}\psi''(f)(f-1)^2,\]
					and $\psi(f)\sim f^p$ near $\infty$. Thus,
					\begin{gather*}
						(f-1)^2\leq C(1+f^{2-p})(\psi(f)-\psi(1)-\psi'(1)(f-1)).
					\end{gather*}
					Hence 
					\begin{gather}
						\int_I\psi(f)\mu_0\geq \int\frac{(f-1)^2}{1+f^{2-p}}\mu_0\geq\int_{f\geq 1}\frac{(f-1)^2}{1+f^{2-p}}\mu_0\geq M\int_{f\geq 1}\psi(f)\mu_0.
					\end{gather}
					Gathering the above estimates,
					\begin{gather}
						II\leq II_1+II_{21}+II_{22}\leq\epsilon\int_I\psi''(f)(\partial_xf)^2\mu_0+\epsilon\int_I \psi(f)\mu_0+Me^{\lambda_0t}.
					\end{gather}
					\paragraph{Proof of \eqref{wlog:c=0}}
					If we consider the new coordinate $(t,y)$, then the coefficient of mixed derivative $\partial_{tx}$ is
					\[A(x,y)=cyt_x+2by^2t_y\]
					To make $A(x,1)=0$, we choose \[t_x(x,1)=1, \qquad \mbox{ and then} \qquad t_y(x,1)=-\frac{c(x,1)}{2b(x,1)}.\] 
					Let $f(x,y)=\partial_{xy}t=\partial_{yx}t$, we choose $f(x,y)=f(x,1)h(y)$ and
					\begin{gather}\label{t}
						t_x(x,y)=\int_0^yf(x,z)dz+g(x),\ 
						t_y(x,y)=\int_0^xf(z,y)dz+l(y).
					\end{gather}
					Plugging (\ref{t}) into $t_x, t_y$, we find
					\begin{gather*}
						\int_0^1f(x,y)dy+g(x)=1,\ 
						\int_0^xf(z,1)dz+l(1)=-\frac{c(x,1)}{2b(x,1)},
					\end{gather*}
					and hence
					\[f(x,1)=-\frac{\partial }{\partial x}\frac{c(x,1)}{2b(x,1)},\ g(x)=1-f(x,1)\int_0^1h(y)dy.\]
					Now we only need to make $t_x$ everywhere positive. Then, the coordinate change $(x,y)\mapsto (t,y)$ would be bijective on $[0,1]^2$. Now by (\ref{t}),
					\begin{gather*}
						t_x(x,y)=f(x,1)\int_0^yh(z)dz+1-f(x,1)\int_0^1h(y)dy
						=1-f(x,1)\int_y^1h(z)dz.
					\end{gather*}
					We can choose a smooth function $h(z)$ so that $|f(x,1)|\cdot|\int_y^1h(z)dz|<1$ for all $(x,y)\in [0,1]^2$. Therefore $t(x,y)$ exists since $t_{xy}=t_{yx}$ and $A(x,1)=0$.
				\end{proof}
				As a consequence we have the following estimate in Wasserstein distance,
				\begin{theorem}\label{col:wasserstein}
					Fix a point $p$ that is not on any quadratic transverse edge. Then the Wasserstein distance between the transition probability $p_t(p,\cdot)$ and the invariant measure supported on the tangent edge converges exponentially:
					\begin{align}
						W(p_t(p,\cdot),\mu_0(x)\delta_0(y))\leq M e^{-\frac{a}{2} t}, t>0.
					\end{align}
				\end{theorem}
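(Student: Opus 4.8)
The plan is to combine the Lyapunov estimate of Corollary~\ref{coro_p}, which forces $p_t(p,\cdot)$ to concentrate on the tangent edge $H$ as $t\to\infty$, with the total variation estimate of the preceding Proposition for the $h$-weighted marginal $q_h(t,x)$ along $H$, via the Kantorovich--Rubinstein duality on the compact space $P$:
\[
W\big(p_t(p,\cdot),\mu_0(x)\delta_0(y)\big)=\sup\Big\{\Big|\int_P f\,dp_t(p,\cdot)-\int_I f(x,0)\,\mu_0(dx)\Big|:\ \mathrm{Lip}(f)\le 1\Big\}.
\]
I would fix $f$ with $\mathrm{Lip}(f)\le 1$ and, subtracting a constant, assume $0\le f\le D:=\mathrm{diam}(P)$. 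Let $U\cong[0,1]\times[0,1]$ be the tubular neighbourhood of $H$ with $H=\{y=0\}$ and let $h$ be as in \eqref{h(y)}.

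The first step is transverse concentration. Applying Corollary~\ref{coro_p} to $V$ itself (here $\|V\|_{P_V}=1$, and $V(p)<\infty$ precisely because $p$ lies on no quadratic transverse edge) gives $\int_P V\,dp_t(p,\cdot)=P_tV(p)\le MV(p)\,e^{\lambda_0 t}<\infty$. Since $V$ is continuous, strictly positive off $H$, hence bounded below by a positive constant on the compact set $P\setminus U$, this yields $p_t(p,P\setminus U)\le C e^{\lambda_0 t}$. Moreover $V$ is bounded below by a constant multiple of $\rho^{\nu}$ with $0<\nu<1$ near $H$ (Remark after Theorem~\ref{thm:lyapunov}; $\nu$ is the exponent chosen in Case $i=0$ of that proof), and $\rho\asymp y$ in the tubular coordinates, so $p_t(p,\{y\ge\delta\}\cap U)\le CV(p)e^{\lambda_0 t}\delta^{-\nu}$ for every $\delta\in(0,1)$. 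Hence $\int_U y\,dp_t(p,\cdot)\le \delta+p_t(p,\{y\ge\delta\}\cap U)$, and optimizing $\delta=(CV(p)e^{\lambda_0 t})^{1/(1+\nu)}$ gives $\int_U y\,dp_t(p,\cdot)\le C e^{\lambda_0 t/(1+\nu)}$.

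The second step is a triangle-inequality decomposition:
\[
\int_P f\,dp_t=\int_{P\setminus U}f\,dp_t+\int_U[f(x,y)-f(x,0)]\,dp_t+\int_U f(x,0)h(y)\,dp_t+\int_U f(x,0)(1-h(y))\,dp_t .
\]
By the previous paragraph the first term is $\le D\,p_t(p,P\setminus U)\le Ce^{\lambda_0 t}$, the second is $\le C\int_U y\,dp_t\le Ce^{\lambda_0 t/(1+\nu)}$ by Lipschitzness, and the last is supported in $\{y\ge A\}$, hence $\le D\,p_t(p,\{y\ge A\}\cap U)\le Ce^{\lambda_0 t}$. The third term equals $\int_I f(x,0)\,q_h(t,x)\,dx$ by \eqref{weighted_prob}, so that
\[
\Big|\int_I f(x,0)\big(q_h(t,x)\,dx-\mu_0(dx)\big)\Big|\le\|f(\cdot,0)\|_\infty\,\|q_h(t,\cdot)-\mu_0\|_{TV}\le DM e^{-\frac a2 t}
\]
by the preceding Proposition. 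Since $0<\nu<1$ and $a\le-\lambda_0$, one has $\tfrac a2\le\tfrac{-\lambda_0}{2}\le\tfrac{-\lambda_0}{1+\nu}\le-\lambda_0$, so every term is $\le M e^{-\frac a2 t}$ and the theorem follows.

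The main obstacle is the transverse step: upgrading the scalar Lyapunov bound $\int_P V\,dp_t\le MV(p)e^{\lambda_0 t}$ to control of the first $y$-moment $\int_U y\,dp_t$, which requires the lower bound $V\gtrsim\rho^{\nu}$ with $\nu<1$ for the distance $\rho$ to $H$ together with the truncation optimization; this is also where the hypothesis that $p$ avoids the quadratic transverse edges is used, to guarantee $V(p)<\infty$ (and hence $p_t(p,\cdot)$ charges no quadratic edge). Everything else is routine bookkeeping.
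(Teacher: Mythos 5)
Your proof is correct and follows the same skeleton as the paper's: Kantorovich--Rubinstein duality, splitting off $P\setminus U$, comparing the $h$-weighted marginal $q_h(t,\cdot)$ with $\mu_0$ through the total-variation estimate of the preceding Proposition, and using the Lyapunov estimate of Corollary~\ref{coro_p} for everything transverse to $H$. The one place you diverge is the treatment of the discrepancy inside $U$: the paper observes that $g(x,y)=f(x,y)-f(x,0)h(y)$ vanishes on $H$ and hence lies in $C^V(P)$ with $\|g\|_{P_V}$ bounded uniformly in $f$ (precisely because $V\sim\rho^{\nu}$ with $\nu<1$ near $H$), and applies Corollary~\ref{coro_p} to $g$ directly, getting decay at rate $e^{\lambda_0 t}$ in one stroke; you instead apply Corollary~\ref{coro_p} only to $V$ itself and convert the resulting bound on $\int V\,dp_t$ into a first $y$-moment bound by Markov's inequality plus truncation optimization, which costs you the weaker rate $e^{\lambda_0 t/(1+\nu)}$. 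Since $a\le-\lambda_0$ and $1+\nu<2$ this is still dominated by $e^{-\frac{a}{2}t}$, so your bookkeeping closes; the paper's variant is slightly cleaner and rate-optimal, while yours makes the role of the exponent $\nu<1$ and of the hypothesis $V(p)<\infty$ (i.e.\ $p$ off the quadratic edges) more explicit.
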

				\begin{proof}
					For any $f\in C^0(P)$ with $\text{Lip}(f)\leq 1$, 
					\begin{gather*}
						\int_Pfp_t-\int_Pf\mu(x)\delta_0(y)=\int_{P\setminus U}fp_t+\int_Ufp_t-\int_Uf\mu(x)\delta_0(y).
					\end{gather*}
					$\int_{P\setminus U}fp_t$ is bounded by $Me^{-\lambda_0t}$with constant $M$ independent of $f$. The second term is
					{
					\begin{align*}
						&\int_Ufp_t(p,x,y)dxdy-\int_Uf\mu(x)\delta_0(y)\\
						=&\int_I\int_0^1 p_t(p,x,y)[f(x,y)-f(x,0)h(y)]dxdy\\&+\left[\int_I\int_0^1 p_t(p,x,y)f(x,0)h(y)dxdy-\int_If(x,0)\mu(x)dx\right].
					\end{align*}
				}
					It is bounded by $||q_h(t,x)-\mu(x)||_{TV}$. The first term also has exponential decay because $[f(x,y)-f(x,0)h(y)]\in C^V(P)$ with bounded norm independent of $f$. Hence the Wasserstein distance between between $p_t(p,\cdot)$ and $\mu(x)\delta_0(y)$ decays exponentially  with the same rate $\frac{a}{2}$.				\end{proof}

				\section{Numerical Experiments}\label{sec:num}
				In this section, we present numerical simulations that illustrates the long time behaviors obtained in earlier sections.  Since the distributions may converge to lower dimensional  manifolds at tangent boundaries, we adopt the probabilistic representation \cite{stroock1997multidimensional} that considers the stochastic process governed by the generator $L$. Comparing with solving PDE directly on meshgrids, the probabilistic representation can be numerically implemented in a way respecting the local coordinate chart at degenerated boundaries. See \cite{wangKPP,ferre2019error} for examples of stability of long time behaviour simulation by numerical probabilistic approach. 
				
				Following the theoretical derivations, we start with examples in one space dimension before considering a two dimensional case. 
				
				We present the Lagrangian scheme in detail in the one-dimensional setting.
				\subsection{One dimensional cases with mixed boundary conditions}
				We first consider a general model operator that is of Kimura type at $x=0$ and of quadratic type at $x=1$.  Specifically,
				\begin{align}
					L=(c_0(1-x)^2-c_1x(1-x))\partial_x + x(1-x)^2\partial_x^2,
				\end{align}
				where $c_0$ and $c_1$ denote the strength of convection near boundaries. We recall that a Kimura boundary at $x=0$ is transverse  when $c_0>0$ and tangent when $c_0=0$; the quadratic boundary at $x=0$ is transverse when $c_1>1$ and tangent when $c_1<1$. When both boundaries are transverse, $L$ admits an invariant measure
				\begin{align}
					\mu_0=\frac{1}{Z}x^{c_0-1}(1-x)^{c_1-2}.
				\end{align}
				To present the convergence result, we use particles to represent the distribution $q(t,x)$ defined by
				\begin{align}
					q(t,x)=\int_0^1 p_t(x,y)q_0(x) dy,
				\end{align}
				where $q_0$ is the initial distribution and $p_t$ denotes the transition probability of the process whose generator is $L$. We denote the process as $X_\cdot$, where $X_0$ has initial distribution $q_0$, and $X_\cdot$ follows stochastic differential equation (SDE) in the It\^o sense \cite{stroock1997multidimensional},
				\begin{align}
					dX=(c_0(1-X)^2-c_1X(1-X))dt+\sqrt{2X(1-X)^2} dW_t,
				\end{align}
				where $W_t$ is a standard Brownian motion. The probability density function of $X_t$ is then $q(t,\cdot)$.
				
				We now develop a modified Euler Maruyama (EM) scheme \cite{kloeden1992stochastic} to generate approximated realizations of the  process $X$ and then approximate $q$ by the empirical distribution of such generated realizations. First, $X_n$ denotes the approximated process at time $t_n=n\Delta t$. Here, the length of the time interval  $\Delta t$ is a fixed constant during the computation. Since the degeneracy of $L$ only happens at boundaries, we apply a standard EM scheme  when $x$ is `far' from the boundary. 
				
				In practice, when $X_n\in[0.01,0.99]$, starting from $X_n$, for one time step $\Delta t$, we consider,
				\begin{align}
					X_{n+1}=X_n+(c_0(1-X_n)^2-c_1X_n(1-X_n))\Delta t+\sqrt{2X_n(1-X_n)^2}\sqrt{\Delta t} \mathcal{N},
				\end{align}
				where $\mathcal{N}$ denotes a generated standard Gaussian distribution. 
				
				When $X_n<0.01$, the particle is considered as approaching the Kimura boundary at $x=0$. The leading order of SDE turns to be
				\begin{align*}
					dX=c_0dt,
				\end{align*}
				which provides a strong forcing so that the position of particles remain strictly positive. To this end, we consider an implicit scheme,
				\begin{align*}   
					X_{n+1}=X_n+(c_0(1-X_n)^2-c_1X_{n}(1-X_n))\Delta t+\sqrt{2X_{n+1}(1-X_n)^2}\sqrt{\Delta t} \mathcal{N},
				\end{align*}
				which may be explicitly solved by
				{\scriptsize
				\begin{align}
					\sqrt{X_{n+1}}=\frac{\sqrt{2\Delta t}\Delta W(1-X_n)+\sqrt{(\sqrt{2\Delta t}\Delta W(1-X_n))+4(X_n+(c_0(1-X_n)^2-c_1X_{n}(1-X_n))\Delta t)}}{2}.
				\end{align}
			}
				When $X_n>0.99$, the particle is considered as approaching the quadratic boundary at $x=1$. The leading order of the SDE is
				\begin{align*}
					dX=-c_1(1-X)dt+\sqrt{2} (1-X) dW_t.
				\end{align*}
				It is then natural to apply a logarithm transform over $1-X$, i.e., let $y=-\log(1-x)$. Then by the It\^o formula, $Y$ follows
				\begin{align}
					dY=(c_0(1-X)-c_1X)dt+\sqrt{2X} dW_t + X dt,
				\end{align}
				where $X=1-\exp(-Y)$. This is not a degenerate SDE since here $X$ is assumed to be near $x=1$.
				
				\paragraph{Case I: Transverse Kimura and  Transverse Quadratic}
				In Fig.\ref{fig:QtrKtr}, we show the empirical PDF (histogram) of the approximated realizations obtained by the proposed algorithm. We start from a single-point initial distribution, $q_0=\delta(x-0.5)$ with one million realizations. To ensure transverse boundary condition on both side, we set $(c_0,c_1)=(0.5,2)$. For each realization, we approximate the SDE with $\Delta t=2^{-14}$ until $T=2^{1}$. As a reference, we plot the target invariant measure $\mu_0$ with a solid red line. 
				
				\begin{figure}[htbp]
					\centering
					\includegraphics[width=\linewidth]{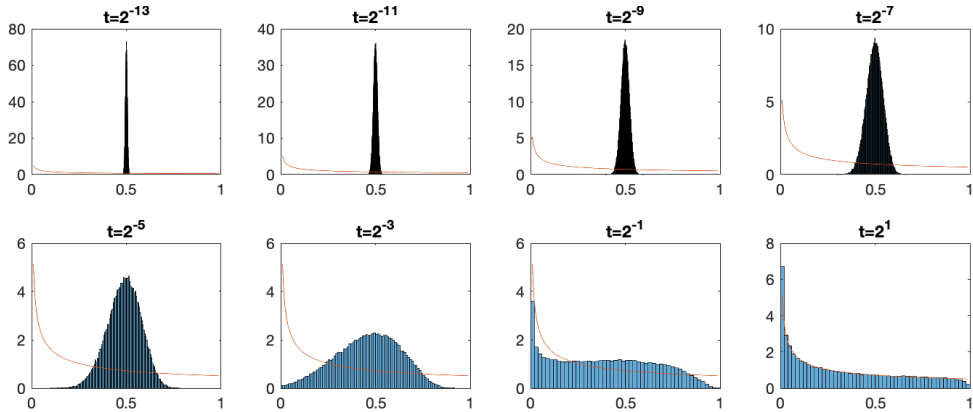}
					\caption{Distribution of realizations when $x=0$ is of Kimura transverse type and $x=1$ is of quadratic transverse type. $\mu_0$ is the solid red line in each sub-graph}
					\label{fig:QtrKtr}
				\end{figure}
				In addition, we compute the Wasserstein distance between to empirical distribution of the simulation and the target invariant measure. By optimal transport theory in one dimension, the optimal transport map in the Wasserstein distance is given by the inverse cumulative distribution functions \cite{rachev1998mass}. More precisely, let $\{x_i\}_{i=1}^N\subset R^1$ be the position of the particles and without loss of generality, assume $x_1\leq x_2\leq\cdots\leq x_N$. Let $F_\mu$ be the cumulative distribution functions of target distribution. Then the $W^p$ distance can be approximated by
				\begin{align}
					W^p=\Big(\frac{1}{N}\sum_{i=1}^{N}\big(x_i-F_\mu^{-1}(\frac{i-1/2}{N})\big)^p\Big)^{1/p}
				\end{align} 
				In Fig.\ref{fig:WD1D}(a), we present the $W^1$ distance between the empirical distribution of realization of SDE with $\Delta t=2^{-9}$ until $T=2^{1}$.
				\paragraph{Case II: Transverse Kimura and Tangent Quadratic}
				In the second case, we investigate the case when one of the boundary conditions becomes tangent. As an example, we set $(c_0,c_1)=(0.5,0.5)$. Then the quadratic boundary on the right becomes tangent and the invariant measure is $\delta(x-1)$. The $p$-Wasserstein distance in such case can be computed by,
				\begin{align}
					W^p(q(t,\cdot),\delta_1)=\Big(\int (1-x)^p q(t,x)dx\Big)^{1/p}.
				\end{align}
				In Fig.\ref{fig:WD1D}(b), we present the $W^1$ distance between the empirical distribution of realizations of the SDE with the invariant measure.
				\begin{figure}[htbp]
					\centering
					\begin{subfigure}{0.35\linewidth}
						\includegraphics[width=\linewidth]{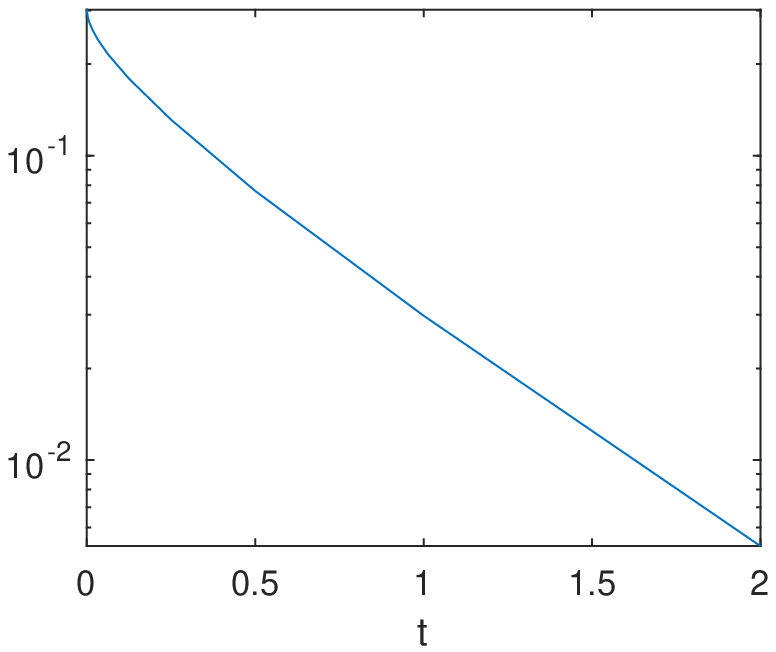}
						\caption{ $x=0$ is of Kimura transverse type and $x=1$ is of quadratic transverse type.}
						\label{fig:QtrKtr2}
					\end{subfigure}
					~
					\begin{subfigure}{0.35\linewidth}
						\includegraphics[width=\linewidth]{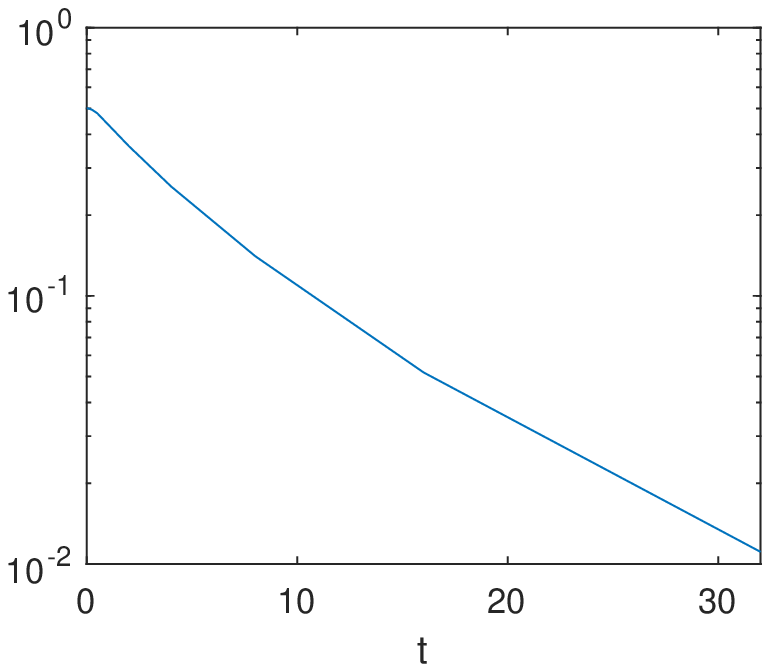}
						\caption{ $x=0$ is of Kimura transverse type and $x=1$ is of quadratic tangent type}
						\label{fig:QtKtr}
					\end{subfigure}
					\caption{$W^1$ distance between the empirical distribution and invariant measure}\label{fig:WD1D}
				\end{figure}
				
				\subsection{2D with tangent diagonal}
				We now turn to the long time behavior for a degenerate operator in two dimension. Specifically we consider, $L$ defined in a triangular domain $T=\{x\geq0,y\geq0,x+y\leq1\}$, 
				\begin{align}\label{eqn:2doperator}
					\mathcal{L}=&\gamma_{12}\left[x y \varphi_{3} \otimes \varphi_{3}: \nabla^{2}+\left(y-x\right) \varphi_{3} \cdot \nabla\right]+\gamma_{23}\left[y \varphi_{2} \otimes \varphi_{2}: \nabla^{2}+\left(y-1\right) \varphi_{2} \cdot \nabla\right]\\\nonumber
					&+\gamma_{13}\left[x \varphi_{1} \otimes \varphi_{1}: \nabla^{2}+\left(x-1\right) \varphi_{1} \cdot \nabla\right]\nonumber
				\end{align}
				where,
				\begin{align} \nonumber
					\varphi_{3}=\left(\begin{array}{c}1 \\ -1\end{array}\right), \quad \varphi_{2}=\left(\begin{array}{c}x \\ y-1\end{array}\right), \quad \varphi_{1}=\left(\begin{array}{c}x-1 \\ y\end{array}\right).
				\end{align}
				Such operator in Eq.\eqref{eqn:2doperator} is the asymptotic generator of two reflection coefficients in a stochastic homogenization regime \cite{topological}. 
				Then we construct an SDE $X=\left(\begin{array}{c}x \\ y\end{array}\right)$ which follows,
				\begin{align}\label{eqn:2dsde}
					dX=&(\gamma_{12}(y-x)\varphi_3+\gamma_{23}(y-1)\varphi_2+\gamma_{13}(x-1)\varphi_1)dt\\\nonumber
					&+\sqrt{2\gamma_{13}x}\varphi_1dW_1+\sqrt{2\gamma_{23}y}\varphi_2dW_2+\sqrt{2\gamma_{12}xy}\varphi_3dW_3,
				\end{align}
				where $W_i$ for $i=1,2,3$ are independent standard Brownian motions and $X(0)\in T$. 
				We observe that $x=0$ and $y=0$ are transverse Kimura boundaries while $x+y=1$ is tangent quadratic boundary.
				As in Theorem \ref{col:wasserstein}, the invariant measure can be represented by $ \mu(x)\delta_{x+y=1}$.  $\mu$ may be computed by restricting $L$ on $x+y=1$. More precisely,
				\begin{align}
					L|_{x+y=1}=& (\gamma_{12}(1-2x)+\gamma_{23}x^2+\gamma_{13}(1-x)^2)\partial_x\\
					&+(1-x)x(\gamma_{12}+\gamma_{23}x+\gamma_{13}(1-x))\partial_x^2,
				\end{align}
				and $L$ admits an invariant measure density (along the diagonal edge),
				\begin{align}\label{eq:rho}
					\mu=\frac{(\gamma_{12}+\gamma_{13})(\gamma_{12}+\gamma_{23})}{(\gamma_{12}+\gamma_{13}(1-x)+\gamma_{23}x)^2},\quad x\in[0,1].
				\end{align}
				
				Starting with $X(0)=(0.1,0.1)^T$, we numerically integrate the SDE \eqref{eqn:2dsde} with $\gamma_{12}=1$, $\gamma_{13}=2$ and $\gamma_{23}=1$  by a Euler method with $\Delta t=2^{-9}$ until $T=2^2$ with $10^6$ realizations. To address the degeneracies near the boundary, we apply a direct cutoff at $x=0$ and $y=0$ at the transverse Kimura boundaries. The is due to the convection at these boundaries points to interior of domain and does not degenerate, comparing with quadratic cases. The process does not accumulate on these boundaries.  For boundary $x+y=1$, we first observe that the convection of continuous dynamics are in the direction $(1,-1)^T$. So at the $n$th-step of the Euler iteration starting from $(x_{n-1},y_{n-1})^T$, if $x_n+y_n>1$, we apply a re-scale factor $\frac{x_{n-1}+y_{n-1}}{x_{n}+y_{n}}$ to $x_n$ and $y_n$ to ensure the dynamics only propagate in the $(1,-1)^T$ direction.
				
				In Fig.\ref{fig:2dhist2}, we present the 2D histogram of the joint distribution of $(x,y)$ for different times. This shows that the majority of particles first travel rightwards along the $x$ axis before converging to the invariant distribution along the diagonal $x+y=1$, as predicted theoretically. 
				\begin{figure}[ht!]
					\centering
					\includegraphics[width=\linewidth]{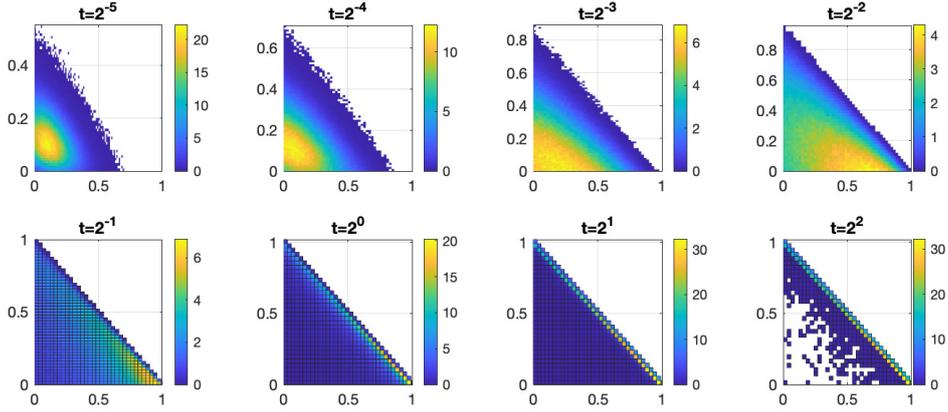}
					\caption{2D Histogram of the joint distribution of $(x,y)$ at different times}
					\label{fig:2dhist2}
				\end{figure}
				In Fig.\ref{fig:2dhist}, we present the histogram of the marginal distribution of $x$. It is found to converges to the invariant measure on the line $x+y=1$ given in \eqref{eq:rho}.
				\begin{figure}[ht!]
					\centering
					\includegraphics[width=0.9\linewidth]{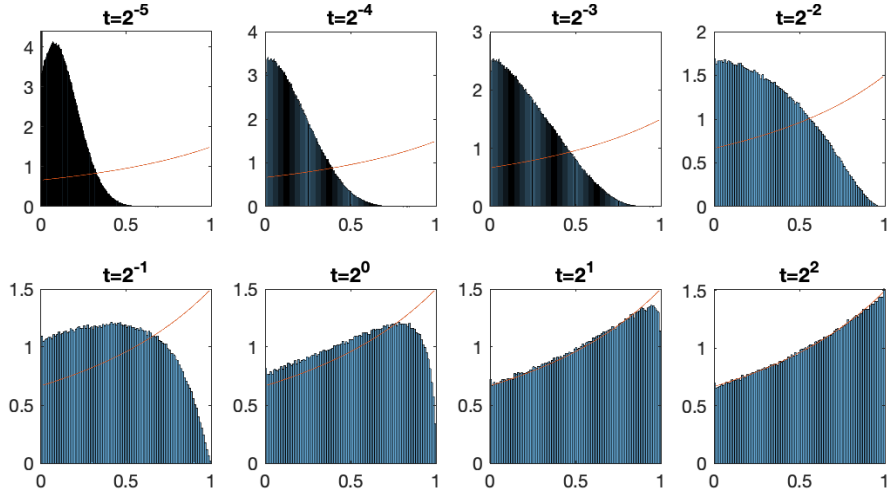}
					\caption{Histogram of the marginal distribution of $x$ at different times. Reference pdf is the solid red line.}
					\label{fig:2dhist}
				\end{figure}
				
				In Fig.\ref{fig:2dw1}(a) we show the approximate $E(1-X-Y)=\int (1-x-y)q(t,x,y)dxdy$ against time $t$. This confirms that the density of dynamics converges exponentially fast to the diagonal $x+y=1$.
				To quantitatively describe the convergence to the invariant measure, we also computed the Wasserstein distance between $x$ and $\mu(x)$ in Fig.\ref{fig:2dw1}(b). 
				\begin{figure}[ht]
					\centering
					\begin{subfigure}{0.35\linewidth}
						\includegraphics[width=\linewidth]{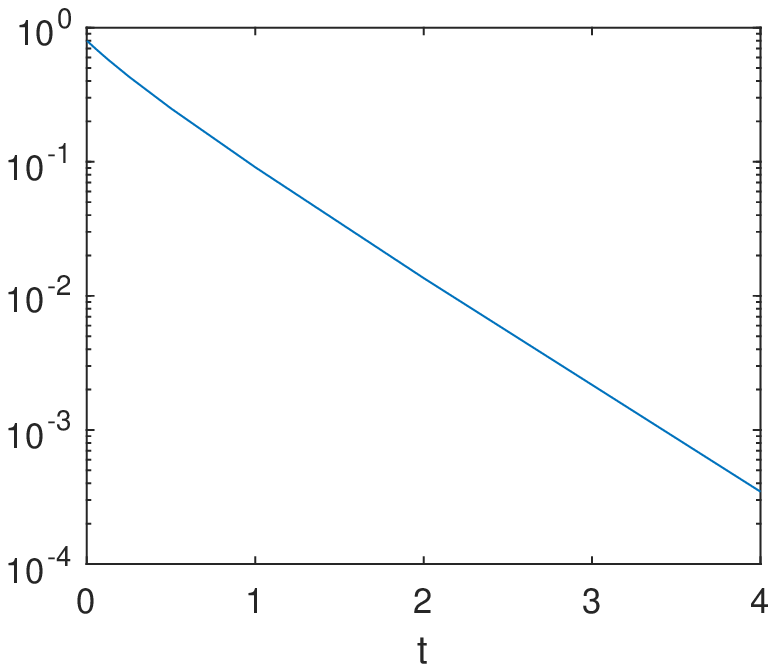}
						\caption{$E(1-x-y)$ in 2D triangular example.}
					\end{subfigure}~
					\begin{subfigure}{0.35\linewidth}
						\includegraphics[width=\linewidth]{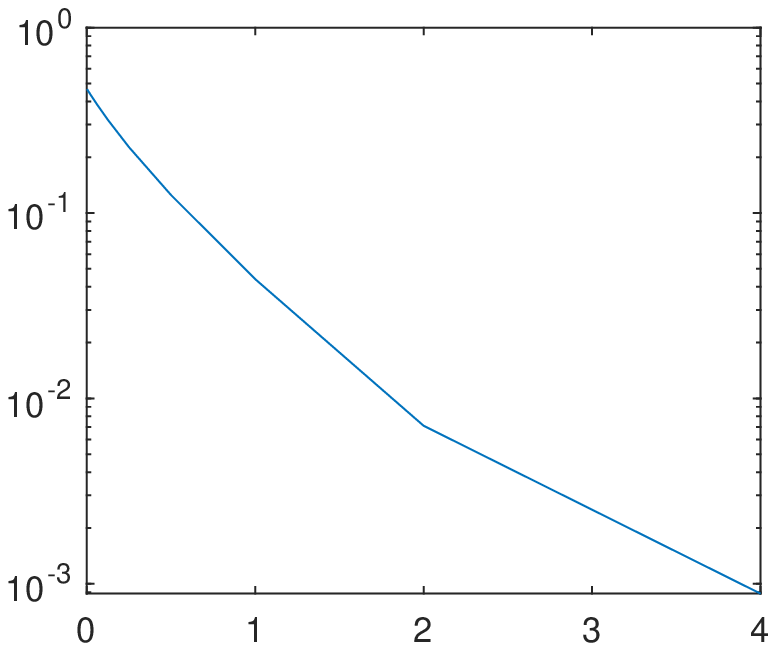}
						\caption{$W^1$ distance between empirical $x$ and $\rho(x)$ in 2D triangular example.}
					\end{subfigure}
					\caption{Convergence to degenerated invariant measure}
					\label{fig:2dw1}
				\end{figure}

				\newpage

				\appendix
				\section{Appendix} 
				\begin{lemma}\label{pointwise_bound}Fix $p\in P$ that is not on the quadratic edge or tangent Kimura edge.
					For $q\in H\times [A,1]$,
					the transition probability $p_t(p,q)$ has a pointwise upper bound:
					\begin{gather}\label{pointwise_esti}
						p_t(p,q)\leq C\cdot\text{exp}\left[-\frac{x^2}{4C_2t}+C_0|x|+C_1t\right],
					\end{gather}
					for some constant $C, C_0, C_1, C_2>0$.
					Therefore
					\begin{gather}\label{int_v(t,x)}
						\left|\int_I\psi'(f)v(t,x)dx\right|\leq\epsilon\int_I\psi\mu+\sqrt{t}e^{(1-\epsilon)\lambda_0 t}.
					\end{gather}
				\end{lemma}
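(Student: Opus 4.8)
The plan is to establish the Gaussian-type pointwise upper bound \eqref{pointwise_esti} first, and then deduce the integral estimate \eqref{int_v(t,x)} as a consequence, combining it with the exponential-in-$\lambda_0$ decay already available from Corollary \ref{coro_p}. For the pointwise bound, I would start from the fact that $q \in H \times [A,1]$ is bounded away from the tangent edge $H = \{y=0\}$, so that along the segment of the trajectory that reaches $q$ the diffusion is uniformly non-degenerate in the $y$-direction; moreover, since $p$ is not on any quadratic transverse edge, the diffusion started from $p$ does not get permanently trapped near a quadratic boundary. The idea is to use the known parametrix / heat-kernel construction for $e^{tL}$ (from \cite{chen2022mixed}, and locally from Theorem \ref{semigroup}) together with a standard Aronson-type Gaussian upper bound for the non-degenerate part of the operator. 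Concretely, near $q$ one writes $L$ in adapted coordinates $(x,y)$ where $x$ is the coordinate along $H$ and $y \in [A,1]$ is transverse; there $L$ is uniformly elliptic, so the local heat kernel obeys $p_t^{\mathrm{loc}}(p,q) \le C t^{-1} \exp(-c\,\mathrm{dist}(p,q)^2/t)$, and since the relevant distance in the $x$-variable is comparable to $|x|$, we get the factor $\exp(-x^2/(4C_2 t))$. The terms $C_0|x|$ and $C_1 t$ absorb the lower-order drift contributions and the error terms from patching the local kernel to the global one via the Neumann series in Theorem \ref{semigroup} (whose remainder kernel $h_t$ is controlled by $O(e^{-c/t})$, hence harmless).

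Given \eqref{pointwise_esti}, the estimate \eqref{int_v(t,x)} follows by inserting the bound into the definition \eqref{def:v(t,x)} of $v(t,x)$ and of $f(t,x) = q_h(t,x)/\mu_0(x)$. I would split the $x$-integral according to whether $f(t,x) \le 1$ or $f(t,x) > 1$. On $\{f \le 1\}$, one uses $|\psi'(f)| \le C(1+f^{p-1})$ together with the pointwise control on $p_t$ restricted to $H\times[A,1]$: because $h'$ and $h''$ are supported away from $H$ (by \eqref{h(y)}, $h \equiv 1$ on $H\times[0,A]$), the kernel appearing in $v$ is evaluated only at $q\in H\times[A,1]$ where \eqref{pointwise_esti} applies, and integrating the Gaussian in $x$ yields a factor $\sqrt{t}$; combined with the overall exponential prefactor inherited from Corollary \ref{coro_p} (the mass outside a neighborhood of $H$ decays like $e^{\lambda_0 t}$, and the $h'$, $h''$ weights live there) this gives the $\sqrt{t}\,e^{(1-\epsilon)\lambda_0 t}$ term. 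On $\{f > 1\}$, one uses the convexity/growth properties of $\psi$ exactly as in the proof of \eqref{TVthm_II} — namely $\int_{f\ge 1}\psi(f)\mu_0 \le M \int_I \psi(f)\mu_0$ and $|\psi'(f)| \lesssim \psi(f)/|f|$ near infinity — to bound the contribution by $\epsilon \int_I \psi(f)\mu_0$ after rescaling $\epsilon$ against the small constant coming from $|B(t,x)-a(x,0)|$ and the smallness of the neighborhood $U$.

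The main obstacle, I expect, is the first half: rigorously obtaining the Gaussian upper bound \eqref{pointwise_esti} uniformly up to the tangent edge $H$ in the $x$-variable while the starting point $p$ may itself sit near a degenerate (Kimura transverse or non-transverse-quadratic) edge. One has to be careful that the only degeneracy "seen" by the relevant piece of the kernel is transverse to $H$, which is controlled because $q$ stays in $H\times[A,1]$, but the trajectory from $p$ to $q$ could skirt other degenerate edges; there one should invoke the comparison with the model operators of \cite{chen2022mixed} and the fact, already used repeatedly in Section \ref{sec:2d}, that the total mass near such edges decays exponentially, so their contribution is subsumed into the $C_1 t$ (equivalently $e^{\lambda_0 t}$) error. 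The drift terms, being of order one in the adapted coordinates, only shift the Gaussian center by an $O(t)$ amount and hence produce the benign $C_0|x|$ correction via the elementary inequality $x^2/(4C_2 t) - C_0|x| \le x^2/(8C_2 t) + C t$.
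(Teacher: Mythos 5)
There is a genuine gap in the first half of your argument, which is precisely where the substance of the lemma lies. A local parametrix near $q$ combined with an Aronson bound for the non-degenerate part cannot produce \eqref{pointwise_esti}: after the coordinate change at the quadratic endpoints of $H$, the variable $x$ ranges over an unbounded interval (this is essential later, since $\mu_0^{-1}$ grows like $e^{a|x|}$ and must be beaten by the Gaussian factor), so the bound has to be global in space and valid for all $t$ bounded below, uniformly as $q$ approaches the degenerate ends of $H$ and no matter how the heat flows from $p$ across the other (Kimura, quadratic) edges. The patching construction of Theorem \ref{semigroup} only controls the error kernel for short times, and iterating it destroys the spatial Gaussian decay; likewise, ``the total mass near degenerate edges decays exponentially'' is an $L^1$ statement and gives no pointwise kernel control. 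The paper instead runs Davies' exponential perturbation method (following \cite[Theorem 5.2.10]{saloff-coste_2001}): it introduces a weighted measure adapted to the Kimura weights, proves the twisted-semigroup bound \eqref{2-estimate} for $e^{-\alpha\phi}H_te^{\alpha\phi}$ by estimating the Dirichlet form both in the elliptic region and near tangent Kimura corners (where the drift part $T$ has logarithmic singularities and one must choose $\phi$ with $|\phi_y\ln x|\le x$), imports mean value inequalities at $p$ and $q$ from \cite{lieberman1996second} and \cite{10.1093/amrx/abw002}, and only then optimizes in $\alpha$. None of this machinery, which is what makes the Gaussian bound hold across the degeneracies, appears in your proposal, and you correctly identify it as the obstacle without resolving it.

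Your treatment of the second assertion is also looser than the paper's, though closer to being repairable. The paper does not split on $\{f\le 1\}$ versus $\{f>1\}$; it applies a three-factor H\"older inequality to $\int\psi'(f)v$, bounding $\|\psi'(f)\mu^{1-1/p}\|_{p/(p-1)}$ by $\int\psi\,\mu$ plus a constant, extracting the $e^{(1-\epsilon)\lambda_0 t}$ decay from a small power of $v$, and using the remaining Gaussian power of $v$ against the exponential growth of $\mu_0^{-1}$ to make the middle factor $O(\sqrt{t}\,e^{\text{const}})$; the exponent $1/(ps)$ is then tuned so the product gives $\sqrt{t}\,e^{(1-\epsilon)\lambda_0 t}$. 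Your sketch gestures at the $\sqrt{t}$ from integrating the Gaussian but does not explain how $\psi'(f)$, which contains $\mu_0^{-1}$, is prevented from overwhelming that Gaussian near the ends of $I$; that balance is the point of the interpolation exponents in the paper's argument.
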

				\begin{proof}
					We can follow the proof of \cite[Theorem 5.2.10]{saloff-coste_2001} with several modifications. 
					
					Firstly, unlike the setting under the usual Rimmannian measure on $P$, our proof introduces a weighted measure $d\mu$ on $P$. The principal symbol of $L$ induces a Riemannian metric $dV$ on $P$. For each Kimura boundary surface $H_i, 1\leq i\leq\eta$, define $\rho_i(p)$ to be the Riemannian distance from the point $p\in P$ to $H_i$. Then
					\[B_i|_{H_i}:=\frac{1}{4}L\rho_i^2|_{H_i}\]
					are coordinate-invariant quantities. We also let $B_i$ denote a smooth extension from $H_i$ to $P$ of the coefficients. Then we define the weighted measure $d\mu$ by
					\begin{gather}\nonumber
						d\mu(p):=\underset{i=1}{\overset{\eta}{\Pi}}\rho_i(p)^{2B_i-1}dV.
					\end{gather}
					In an adapted system of local coordinates of a corner $p$, $L$
					takes the form: $m+n=2$,
					\begin{align}
						L=&\underset{i=1}{\overset{m}{\sum}}x_i\partial_{x_ix_i}+\underset{i,j=1}{\overset{m,n}{\sum}}b(x,y)x_iy_j\partial_{x_iy_j}+\underset{j=1}{\overset{n}{\sum}}c(x,y)y_j^2\partial_{y_jy_j}\label{local_normal_coordinate} \\&+\underset{i=1}{\overset{m}{\sum}}d(x,y)\partial_{x_i}+\underset{j=1}{\overset{n}{\sum}}e(x,y)y_j\partial_{y_j}.\nonumber
					\end{align}
					The weighted measure $d\mu$ is a multiple by a smooth function of
					\begin{gather*}
						d\mu(x,y)=\underset{i=1}{\overset{m}{\Pi}}x_i^{d_i(x,y)-1}dx_i\underset{j=1}{\overset{n}{\Pi}}\frac{1}{y_j}dy_j.
					\end{gather*}
					
					For $\alpha\in\mathbb{R}$ and $\phi\in C^\infty(P)$ satisfying $|\nabla\phi|\leq 1$ under the Riemannian metric on $P$, define $H_t^{\alpha,\phi}(P)$ on $L^2(\mu)$ by
					\begin{gather*}
						H_t^{\alpha,\phi}f(x)=e^{-\alpha\phi(x)}H_t[e^{\alpha\phi}f](x).
					\end{gather*}
					Instead of the estimates of $||H_t^{\alpha,\phi}||_{2\to 2}$ in \cite{saloff-coste_2001}, we have
					\begin{gather}\label{2-estimate}
						||H_t^{\alpha,\phi}||_{2\to 2}\leq e^{(1+\epsilon)\alpha^2t+C_0|\alpha|t+C_1t}
					\end{gather}
					for some constants $C_0, C_1\geq 0$. 
					
					\paragraph{Proof of \eqref{2-estimate}} To see this, compute the derivative:
					\begin{gather*}
						\frac{\partial}{\partial_t}||H_t^{\alpha,\phi}f||_2^2=2(H_t^{\alpha,\phi}f,e^{-\alpha\phi}Le^{\alpha,\phi}H_t^{\alpha,\phi}f)_{L^2(\mu)}.
					\end{gather*}
					Taking $g=H_t^{\alpha,\phi}f$, to derive \eqref{2-estimate} it is sufficient to have
					\begin{gather}\label{sufficient}
						(e^{-\alpha\phi}g, Le^{\alpha\phi}g)\leq[\alpha^2+C_0|\alpha|+C_1](g,g).
					\end{gather}
					
					We associate a bilinear form $Q(u,v)$ to $-(Lu,v)_\mu$ by letting
					\begin{gather*}
						Q(u,v)=(Lu,v)_{L^2(\mu)},
					\end{gather*}
					which can be written as 
					\begin{gather*}
						Q(u,v)=Q_{\text{sym}}+(Tu,v)_{L^2(d\mu)}
					\end{gather*}
					where $Q_{\text{sym}}(u, v)$ is a symmetric bilinear form and $T$ is a first order vector field on $P$.
					We need to estimate it in two cases. 
					\paragraph{Case I:}In a neighborhood that is away from any Kimura edge, then $L$ is uniformly elliptic under $d\mu$ and of the form
					\begin{gather*}
						L=a\partial_x^2+2b\partial_{xy}+c\partial_y^2+V,
					\end{gather*}
					where $V$ is a vector field. Then
					\begin{gather*}
						Q_{\text{sym}}(u,v)=-\int [au_xv_x+cu_yv_y+b(u_xv_y+u_yv_x)]d\mu.
					\end{gather*}
					$T$ is a vector field with bounded smooth coefficients.
					Take $u=e^{\alpha\phi}g, v=e^{-\alpha\phi}g$. Then
					\begin{gather*}
						Q(u,v)=Q(g,g)+\alpha^2\int (a\phi^2_x+2b\phi_x\phi_y+c\phi^2_y)g^2d\mu+\alpha(T\phi,g^2)_{L^2(d\mu)}.
					\end{gather*}
					$L$ is uniformly parabolic so we can bound $||Q(g,g)||$ by $C||g||^2_{L^2(d\mu)}$ for some constant $C>0$. Using the fact that $|\nabla\phi|\leq 1$, the second term and the third term are bounded by $\alpha^2||g||^2_{L^2(d\mu)}$, $C|\alpha|\cdot||g||^2_{L^2(d\mu)}$ for some constant $C>0$, respectively. So we obtained \eqref{sufficient}.

					\paragraph{Case II:}In a neighborhood of the tangent Kimura edge, say suppose in a neighborhood of a corner which is the intersection of two Kimura edges, $L$ is degenerate. Under local coordinates \eqref{local_normal_coordinate},
					\begin{gather}\nonumber
						L=x\partial_{xx}+y\partial_{yy}+xyb(x,y)\partial_{xy}+d_1(x,y)\partial_x+d_2(x,y)\partial_y.
					\end{gather}
					We refer to \cite[§3.1]{article} to see that
					\begin{gather*}
						Q_{\text{sym}}(u,v)=-\int\left(xu_xv_x+yu_yv_y+\frac{1}{2}xyb(u_xv_y+u_yv_x)\right)d\mu
					\end{gather*}
					and $T$ is a tangent vector field with logarithmic singularities:
					\begin{gather*}
						T=x\left(\alpha_1+\beta_1\text{ln}x+\gamma_1y\text{ln}y\right)\partial_x+y\left(\alpha_2+\beta_2\text{ln}x+\gamma_2\text{ln}y\right)\partial_y
					\end{gather*}
					where the functions $\alpha_i, \beta_i, \gamma_i$ are smooth. Because $y=0$ is a tangent edge, the coefficient of $\text{ln}y\partial_x$ vanishes at the edge $y=0$.
					
					Take $u=e^{\alpha\phi}g, v=e^{-\alpha\phi}g$. Then,
					\begin{gather*}
						(Lu,v)=Q(g,g)+\alpha^2\int g^2(x\phi^2_x+y\phi^2_y+axy\phi_x\phi_y)d\mu+\alpha(T\phi,g^2)_{L^2(d\mu)}.
					\end{gather*}
					By \cite[Lemma 3.1]{article}, $||Q(g,g)||\leq C||g||^2_{L^2(d\mu)}$ for some constant $C>0$. Using the fact that $|\nabla\phi|\leq 1$, the second term is bounded by $\alpha^2||g||^2_{L^2(d\mu)}$. $T$ has logarithmic singularities near $x=0$ and we choose $\phi$ so that $|\phi_y\text{ln}x|\leq x$. Then the third term is also bounded by $C|\alpha|\cdot||g||^2_{L^2(d\mu)}$ for some constant $C>0$. We thus obtained \eqref{sufficient}.
					
					\paragraph{Proof of \eqref{pointwise_esti}}
					Fix $p\in P$ that is not on a quadratic edge or tangent Kimura edge, $q\in H\times [A,1], r_1, r_2>0$
					so that $B_{r_1}(p), B_{r_2}(q)\subset P$. Notice that if $q$ is an edge point on a Kimura edge, $B_{r_2}(q)$ is taken to be the semi-ball that lies in $P$. 
					
					Let $\chi_1, \chi_2$ be the indicatrix functions of $B_{r_1}(p), B_{r_2}(q)$. Instead of deriving the mean value inequalities from local Sobolev inequality in \cite[Theorem 5.2.9]{saloff-coste_2001}, we directly import two mean value inequalities. If $q\in\mathring{P}$, $L$ is elliptic in $B_{r_2}(q)$ and we can apply the sup norm inequality with $m=1$ in \cite[Theorem 6.17]{lieberman1996second}. If $q$ is on a transverse Kimura edge, we can apply 
					\cite[Lemma A.6]{10.1093/amrx/abw002} and the proof in Corollary 4.3. Apart from these two modifications, following the proof in \cite{saloff-coste_2001}, we show that for $t\geq \max(r^2_1,r_2^2)$,
					\begin{gather}\nonumber
						p_t(p,q)\leq\frac{C}{\sqrt{\mu(B_1)\mu(B_2)}}\text{exp}\left[\alpha^2t+\alpha(\phi(p)-\phi(q))+C_0|\alpha|t+|\alpha|(r_1+r_2)+C_1t\right].
					\end{gather}
					Taking $\alpha=-\frac{\phi(p)-\phi(q)}{2t}$, then for $t\geq \epsilon^{-2}\max(r^2_1, r^2_2, \epsilon^4)$,
					\begin{gather}\nonumber
						p_t(p,q)\leq\frac{C}{\sqrt{\mu(B_1)\mu(B_2)}}\text{exp}\left[-\frac{(\phi(p)-\phi(q))^2}{4t}+\left(\frac{C_0}{2}+1\right)|\phi(p)-\phi(q)|+C_1t\right].
					\end{gather}
					
					\paragraph{Proof of \eqref{int_v(t,x)}}
					Having obtained the pointwise upper bound of $p_t(p,q)$, we can estimate $v(t,x)$.
					We choose  $\phi(q)=\phi(p)-C_2x$ with some constant $C$ so that $|\nabla \phi|\leq 1$.
					Since $h|_{H\times [0,A]}\equiv 1$, by definition of $v(t,x)$ \eqref{def:v(t,x)}, we only need to integrate $y$ in $[A,1]$.  We use the pointwise upper bounded above to obtain the following estimate of $v(t,x)$:
					\begin{gather}\label{v(t,x)_pointwise}
						v(t,x)\leq C\cdot\text{exp}\left[-\frac{x^2}{4C_2t}+C_0|x|+C_1t\right],
					\end{gather}
					for some constant $C_0, C_1, C_2>0$.
					Next we estimate $\int\psi'(f)v(t,x)dx$. We use H\"older's inequality with $\frac{1}{r}+\frac{1}{s}=1$ to obtain
					\begin{gather}\label{98}
						\left|\int\psi'(f)v(t,x)dx\right|\leq||\psi'(f)\mu^{1-\frac{1}{p}}||_{\frac{p}{p-1}}\cdot||\mu^{\frac{1}{p}-1} v^{1-\frac{1}{ps}}||_{pr}\cdot ||v^{\frac{1}{ps}}||_{ps}.
					\end{gather}
					First it is easy to see
					\begin{gather}\label{first_estimate}
						||v^{\frac{1}{ps}}||_{ps}=O(e^{\frac{\lambda_0}{ps} t})
					\end{gather}
					has exponential decay with rate $\frac{\lambda_0}{ps}$. 
					We next estimate the first term. Since $\psi(x)\sim x^p$ as $x\to\infty$ and taking into account that $\psi$ is negative only on a finite interval, we have
					\begin{gather}\label{99}
						||\psi'(f)\mu^{1-\frac{1}{p}}||^{\frac{p}{p-1}}_{\frac{p}{p-1}}\leq M_1\int |\psi|\mu\leq M_1\int \psi\mu+M_2.
					\end{gather}
					
					For the second term, notice that $\mu^{-1}$ has asymptotic behavior $O(e^{a|x|})$ for $a>0$, and $v(t,x)$ decays quadratically in $|x|$ by \eqref{v(t,x)_pointwise}. So the second term is intergable while it may have exponential growth with respect to $t$. Indeed after a direct calculation 
					\begin{align*}
						||\mu^{\frac{1}{p}} v^{1-\frac{1}{ps}}||_{pr}&=O\left(\sqrt{t}\cdot \text{exp}\left[4\left(C_0+\frac{a(p-1)}{p-\frac{1}{s}}\right)^2(1-\frac{1}{ps})\right]\right)\\
						&=O\left(\sqrt{t}\cdot \text{exp}\left[4\left(C_0+a\right)^2(1-\frac{1}{ps})\right]\right)
					\end{align*}
					since $\frac{1}{s}<1, p<2$. Regarding \eqref{first_estimate}, for $0<\epsilon<1$, we take $\frac{1}{ps}=1-\epsilon\frac{\lambda_0}{4(C_0+a)^2+\lambda_0}$ so that
					\begin{gather}\label{100}
						||\mu^{\frac{1}{p}-1} v^{1-\frac{1}{ps}}||_{pr}\cdot ||v^{\frac{1}{ps}}||_{ps}=O(e^{(1-\epsilon)\lambda_0)}).
					\end{gather}
					Taken $\eqref{98},\eqref{99}, \eqref{100}$ and an interpolation of arithmetic-geometric mean inequality together, we obtain
					\begin{gather}\label{101}
						\left|\int\psi'(f)v(t,x)dx\right|\leq\epsilon\int\psi\mu+\sqrt{t}e^{(1-\epsilon)\lambda_0 t}.
					\end{gather}
				\end{proof}
				
				\begin{lemma}\label{lem:stratification}
					There exists a stratification as described in \eqref{cover}.
				\end{lemma}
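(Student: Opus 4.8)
The plan is to pick the cut values $k_i$ in two stages: first isolate the finitely many values of $\rho$ at which the level--set geometry changes, and then refine the resulting partition until every layer is covered by finitely many model regions that each span the whole $\rho$--interval. \emph{Step 1 (the transition values are finite).} By Assumption \ref{assp:rhoexistence}(b), $\nabla\rho$ never vanishes, so $\rho$ has no critical point in the interior of $P$; with item (d) this also excludes interior local extrema away from $H$, so there $\rho$ is a submersion. On each boundary edge $E$, the set $\{p\in E:\nabla_E\rho(p)=0\}$ is open by item (c) and closed by continuity of $\nabla_E\rho$, hence a union of connected components of $E$; since $P$ is compact it has finitely many edges and corners, and on each connected edge $\rho$ is therefore either constant or strictly monotone. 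Let $\mathcal V=\{v_0<\cdots<v_N\}$ be the finite set consisting of $0$, $\max_P\rho$, the constant value of $\rho$ on each constant edge, and $\rho(p)$ over the corners $p$; note $v_0=0$ (attained only on the tangent edge $H$) and $v_N=\max_P\rho$.

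\emph{Step 2 (local boxes of prescribed $\rho$--range).} For each $p\in P$ I would produce a closed neighbourhood $N_p$ and $\epsilon_p>0$ such that, for every closed interval $[a,b]\subset(\rho(p)-\epsilon_p,\rho(p)+\epsilon_p)\cap[0,\max_P\rho]$ with $\rho(p)\in[a,b]$, the set $N_p\cap\rho^{-1}([a,b])$ is one of the model regions used in the proof of Theorem \ref{thm:lyapunov}, with $\rho$ assuming exactly the values of $[a,b]$ on it: a rectangle $[0,1]\times[a,b]$ in adapted coordinates when $p$ is interior or lies on an edge along which $\rho$ is monotone; a degenerate rectangle with the level $\{\rho=\rho(p)\}$ at one end when $p$ lies on a constant edge; and an irregular region inside such a rectangle when $p$ is a corner with $\nabla_x\rho,\nabla_y\rho\neq0$. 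Away from $\mathcal V$ this is the implicit function theorem applied to the submersion $\rho$ (near a boundary face one completes $\rho$ to adapted coordinates with a complementary coordinate tangent to that face, which is possible because every edge met in the relevant slab carries a strictly monotone $\rho$); near a value of $\mathcal V$ it is the local normal--form analysis already carried out at the start of the proof of Theorem \ref{thm:lyapunov}, namely the forms \eqref{l_rho_interior}, \eqref{l_rho_edge1}, \eqref{l_rho_edge2} and the corner discussion there. Taking the minimum of $\epsilon_p$ over the finitely many corners and constant edges produces a single $\epsilon_*>0$ valid for all of them.

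\emph{Step 3 (refinement and finite subcovers).} Fix a partition $0=k_0<k_1<\cdots<k_n=\max_P\rho$ with $\{k_i\}\supset\mathcal V$ and $\max_i(k_{i+1}-k_i)<\epsilon_*$. Fix $i$; since $(k_i,k_{i+1})$ contains no point of $\mathcal V$, the map $\rho$ restricted to the part of $\rho^{-1}([k_i,k_{i+1}])$ lying away from the finitely many corners and constant edges at the levels $k_i,k_{i+1}$ is a proper submersion over the interval, hence, by an Ehresmann--type argument, a trivial fibre bundle carrying a transverse trivializing vector field; flowing finitely many closed arcs that cover a fibre $\rho^{-1}\!\big((k_i+k_{i+1})/2\big)$ along this field yields finitely many closed ``rectangular'' sets of $\rho$--range exactly $[k_i,k_{i+1}]$. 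At each of $\rho=k_i$ and $\rho=k_{i+1}$ that lies in $\mathcal V$, adjoin the finitely many boxes $N_p\cap\rho^{-1}([k_i,k_{i+1}])$ from Step 2 centred at the corners / constant--edge points at that level, which is legitimate because $k_{i+1}-k_i<\epsilon_*$ and which again have $\rho$--range $[k_i,k_{i+1}]$. These finitely many closed sets cover $\rho^{-1}([k_i,k_{i+1}])$, each contains its centre, and $\rho$ ranges over exactly $[k_i,k_{i+1}]$ on each --- precisely \eqref{cover}--\eqref{layer_b}.

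\emph{Main obstacle.} The delicate step is the corner with $\nabla_x\rho\neq0$ and $\nabla_y\rho\neq0$: there $\rho$ cannot be completed to adapted coordinates, the layer meets the corner in an irregular region rather than a rectangle, and one must check that this region still fibres over $[k_i,k_{i+1}]$ compatibly and is matched to the neighbouring rectangular sets along their shared faces --- essentially redoing, at the level of the stratification, the bookkeeping of Case IV in the proof of Theorem \ref{thm:lyapunov}. Closely tied to this is ensuring that, at a cut value $k_i\in\mathcal V$, the Ehresmann collar from the $\mathcal V$--free side is stitched to these corner and constant--edge boxes while keeping every covering set of full $\rho$--height; a minor additional point is selecting the transverse vector field near a corner so that it is simultaneously tangent to both adjacent faces and nowhere vanishing.
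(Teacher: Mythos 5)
Your proposal is correct and follows essentially the same strategy as the paper: cut $[0,\max_P\rho]$ at the finitely many special values coming from $H$, the corners and the constant edges, cover those levels by finitely many model regions of type $R$ or $T$ via compactness, and fill the generic slabs with closed sets of full $\rho$-height. The only real difference is cosmetic: where you invoke an Ehresmann-type trivialization of $\rho$ over a $\mathcal{V}$-free slab (with the attendant vector-field/corner bookkeeping you flag as delicate), the paper simply covers each such level interval by finitely many coordinate rectangles with two sides on $\rho$-levels and trims them to height $k_{i+1}-k_i$, which sidesteps that issue.
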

				\begin{proof}
					We first take a finite covering of $H$ by coordinate neighborhoods of the form $R$ based at points of $H$. Let $k_1$ be the minimum of the height (normal direction in the local coordinates) of these rectangles. Then by taking the closure of these rectangles and shrinking the height, $P_{[k_0,k_1]}$ is covered by closed rectangles that satisfy the condition \ref{layer_b}. We can do the similar job at each level of the corners. Specifically, for a corner $p$, the level $L_{\rho(p)}:=\rho=\rho(p)$ is the disjoint union  
					\[L_{\rho(p)}= L_{\rho(p),\max}\bigcup L_{\rho(p),i}\]
					where $L_{\rho(p),\max}$ are the collection of points that attain the local maximum. $ L_{\rho(p),\max}$, $L_{\rho(p),i}$ are both compact and a disjoint union of finite points and the edges out of condition \ref{assp:rhoexistence}.c. We cover $L_{\rho(p),\max}, L_{\rho(p),i}$ by finite open sets of the form $R$ or $T$. Again, by taking the closure and shrinking the height of these open sets, there are $\epsilon_1, \epsilon_2\geq 0$ so that $P_{[\rho(p)-\epsilon_1,\rho(p)+\epsilon_2]}$ is covered by the closed sets that satisfy the condition \ref{layer_b}. Up to now, the remaining parts of $P$ we have not covered are a disjoint unions of level intervals. Suppose $P_{[a,b]}$ is one of level interval, we first cover $P_{[a,b]}$ by finite open rectangles with two sides lie on two $\rho$-levels, then we can cut these rectangles so that $P_{[a,b]}$ is covered by closed rectangles that satisfy the condition \ref{layer_b}. Thus we can cover these level intervals by the desired stratification.
				\end{proof}


\bibliographystyle{siam}
\bibliography{ref1dcase}

\end{document}